\theoremstyle{thmstyletwo}%
\newtheorem{theorem}{Theorem}
\newtheorem{proposition}[theorem]{Proposition}%
\newtheorem{remark}{Remark}%
\newtheorem{definition}{Definition}
\newtheorem{lemma}[theorem]{Lemma}
\newtheorem{corollary}[theorem]{Corollary}
\numberwithin{equation}{section}
\numberwithin{equation}{section} 
\definecolor{mygreen}{rgb}{0.1,0.75,0.2}
\providecommand{\mathbbm}{\mathbb} 
\newcommand{\R}{\mathbbm{R}}
\newcommand{\F}{\mathcal{F}}
\renewcommand{\phi}{\varphi}
\newcommand{\tdiam}{\text{diam}}
\newcommand{\iid}{\stackrel{\text{i.i.d.}}{\sim}}
\newcommand{\E}{\mathbb{E}}
\renewcommand{\P}{\mathbb{P}}
\newcommand{\mcF}{\mathcal{F}}
\newcommand{\mcN}{\mathcal{N}}
\newcommand{\inparen}[1]{\left(#1\right)}             
\newcommand{\insquare}[1]{\left[#1\right]}             
\definecolor{mygreen}{rgb}{0.1,0.75,0.2}
\begin{document}

\DOI{DOI HERE}
\copyrightyear{2021}
\vol{00}
\pubyear{2021}
\access{Advance Access Publication Date: Day Month Year}
\appnotes{Paper}
\copyrightstatement{Published by Oxford University Press on behalf of the Institute of Mathematics and its Applications. All rights reserved.}
\firstpage{1}



\title[Sharp concentration of simple random tensors]{Sharp concentration of simple random tensors}


\author{Omar Al-Ghattas
\address{\orgdiv{Department of Statistics}, \orgname{University of Chicago}, \orgaddress{ \postcode{IL 60637}, \country{USA}}}}
\author{Jiaheng Chen*
\address{\orgdiv{Committee on Computational and Applied Mathematics}, \orgname{University of Chicago}, \orgaddress{ \postcode{IL 60637}, \country{USA}}}}
\author{Daniel Sanz-Alonso
\address{\orgdiv{Department of Statistics}, \orgname{University of Chicago}, \orgaddress{ \postcode{IL 60637}, \country{USA}}}}


\authormark{O. Al-Ghattas, J. Chen, D. Sanz-Alonso}

\corresp[*]{Corresponding author: \href{email:jiaheng@uchicago.edu}{jiaheng@uchicago.edu}}

\received{Date}{0}{Year}
\revised{Date}{0}{Year}
\accepted{Date}{0}{Year}


\abstract{
This paper establishes sharp dimension-free concentration inequalities and expectation bounds for the deviation of the sum of simple random tensors from its expectation. As part of our analysis, we use generic chaining techniques to obtain a sharp high-probability upper bound on the suprema of $L_p$ empirical processes. In so doing, we generalize classical results for quadratic and product empirical processes to higher-order settings.
}

\keywords{concentration inequalities; random tensors; empirical processes; generic chaining.}


\maketitle

\section{Introduction}\label{sec:introduction}
 This paper establishes sharp bounds for the operator-norm deviation of the sum of simple (rank-one) random tensors from its expectation. Let $X,X_1,\ldots,X_N$ be i.i.d. centered Gaussian random variables in a separable Hilbert space $H$ with covariance operator $\Sigma$.
 Our first main result, Theorem \ref{thm:main1}, shows that, with a standard notation described below,  for any integer $p\ge 2$, 
 \begin{equation*}
 \E \bigg\|\frac{1}{N}\sum_{i=1}^{N} X_i^{\otimes p}-\E\, X^{\otimes p}\bigg\|\asymp_p \|\Sigma\|^{p/2} \bigg(\sqrt{\frac{r(\Sigma)}{N}}+\frac{r(\Sigma)^{p/2}}{N}\bigg),\quad r(\Sigma):=\frac{\mathrm{Tr}(\Sigma)}{\|\Sigma\|}.
 \end{equation*}
 In addition, Theorem \ref{thm:main1} establishes sharp high-probability and expectation bounds for sub-Gaussian data. To prove Theorem \ref{thm:main1}, we analyze the suprema of a $L_p$ empirical process. In our second main result, Theorem \ref{thm:main2}, we leverage generic chaining techniques to obtain a sharp high-probability bound on   
 \begin{equation*}
 \sup_{f\in \F} \bigg|\frac{1}{N}\sum_{i=1}^N f^p(X_i) -\E f^p(X)\bigg|, \qquad p \ge 2,
\end{equation*}
in terms of quantities that reflect the geometric complexity of the family $\F,$ taken to contain bounded linear functionals in $H$ in the proof of Theorem \ref{thm:main1}. 

\subsection{Outline}
Section \ref{sec:main} states  our two main results and discusses related work. The study of concentration of simple random tensors and $L_p$ empirical processes is carried out in Sections \ref{sec:concentration} and \ref{sec:multi}, respectively. Section \ref{sec:conclusions} closes with conclusions and future directions.

\subsection{Notation}\label{sec:notation}
Given two positive sequences $\{a_n\}$ and $\{b_n\}$, we write $a_n\lesssim b_n$ to denote that $a_n \le c b_n$ for some absolute constant $c>0$. If both $a_n \lesssim b_n$ and $b_n \lesssim a_n$ hold simultaneously, we write $a_n \asymp b_n$. If the constant $c$ depends on some parameter $\tau$, we use the notations $a_n \lesssim_{\tau} b_n,b_n\lesssim_{\tau} a_n$, and $a_n\asymp_{\tau} b_n$ to indicate this dependence.

\section{Main results}\label{sec:main}
Here we state the two main theorems of the paper and compare them with existing results. 
Subsection \ref{subsec:cencentraion_of_tensor} focuses on the concentration of simple random tensors and
Subsection \ref{subsec:L_p} focuses on $L_p$ empirical processes.

\subsection{Concentration of simple random tensors}\label{subsec:cencentraion_of_tensor}

Let $(H, \langle \cdot, \cdot\rangle_H, \| \cdot \|_H)$ be a separable real Hilbert space and let $p$ be a positive integer. Let $X$ be a centered $H$-valued random variable. Suppose that $\E|\langle X, v \rangle_H|^p < \infty$ for all $v \in U_H:= \{v \in H : \|v\|_H = 1 \}$ and let $X^{\otimes p}$ be the multilinear form
\begin{equation*}
    X^{\otimes p} (v_1, \ldots, v_p) := \langle X,v_1 \rangle_H \cdots \langle X, v_p \rangle_H, \qquad \forall \ (v_1, \ldots, v_p) \in H \times \cdots \times H.
\end{equation*}
Thus, $X^{\otimes p}$ is a random variable taking values in the tensor space $H^{\otimes p}.$ We refer for instance to \cite[Section II.4]{reed1980methods} for background on tensor products of Hilbert spaces. The expectation $\E\, X^{\otimes p} $ is called the \emph{$p$-th order moment tensor} of $X.$ When $p=2,$ the second-order moment tensor $\E\, X^{\otimes 2}(v_1,v_2) = \E \langle X, v_1\rangle_H \langle X,v_2\rangle_H,$  $\, \forall \, (v_1, v_2) \in H \times H $ is called the \emph{covariance} of $X.$ By the Riesz representation theorem, this bilinear form can be represented by the \emph{covariance operator} $\Sigma: H \to H$ defined by 
$$ \langle v_1, \Sigma v_2 \rangle_H : =  \E \,X^{\otimes 2}(v_1,v_2), \qquad    \forall \ (v_1,v_2) \in H \times H.$$

 Given independent copies $X_1, \ldots, X_N$ of $X,$ our goal is to establish concentration inequalities for the $p$-th order sample moment tensor $\frac{1}{N} \sum_{i=1}^N X_i^{\otimes p},$ which is a sum of i.i.d. simple (rank-one) random tensors.   Specifically, we seek to obtain high-probability and expectation bounds on the deviation 
\begin{equation}\label{eq:opnormdeviation}
 \bigg\|\frac{1}{N}\sum_{i=1}^{N} X_i^{\otimes p}-\E \, X^{\otimes p}\bigg\| = \sup_{v_1, \ldots, v_p \in U_H} \bigg| \frac{1}{N}\sum_{i=1}^N \langle X_i,v_1\rangle_H \cdots \langle X_i, v_p \rangle_H  -  \E\langle X,v_1 \rangle_H \cdots \langle X, v_p \rangle_H \bigg|,
 \end{equation}
where $\| \cdot \|$ denotes the norm in the tensor space $H^{\otimes p}.$ Notice that for $p=2,$ the operator norm of the covariance operator $\Sigma: H \to H$ agrees with the norm of the covariance  $\E \,X^{\otimes 2}$ as a bilinear form in $H^{\otimes 2}.$ Consequently, we will slightly abuse notation and  denote by $\| \Sigma\|$ the operator norm of the covariance operator. Furthermore, to simplify the notation, we will henceforth omit the subscript $H$ in the inner-product $\langle \cdot, \cdot \rangle_H$ and norm $\| \cdot \|_H$. 

We will focus on Gaussian and sub-Gaussian random variables. Recall that an $H$-valued random variable $X$ is called \emph{Gaussian} iff all one-dimensional projections $\langle X, v \rangle$ for $v \in H$ are Gaussian real-valued random variables. A centered random variable $X$ in $H$ is called \emph{sub-Gaussian} iff
    \[
    \|\langle X,v \rangle\|_{\psi_2}\lesssim  \|\langle X,v \rangle\|_{L_2}, \qquad \forall \ v \in H,
    \]
where the Orlicz $\psi_2$-norm of a real-valued random variable $Y$ is defined as $\|Y\|_{\psi_2}:=\inf \left\{c>0: 
\E \exp (Y^2/c^2) \leq 2\right\}$. Furthermore, $X$ is called \emph{pre-Gaussian} iff there exists a centered Gaussian random variable $Y$ in $H$ with the same covariance operator as that of $X.$

Our first main result, Theorem \ref{thm:main1}, provides sharp dimension-free bounds for the deviation \eqref{eq:opnormdeviation} in terms of the    \emph{effective rank}  $r(\Sigma): = \text{Tr}(\Sigma)/\| \Sigma\|$ of the covariance operator.

\begin{theorem}\label{thm:main1}
Let $X, X_1, \ldots, X_N$ be i.i.d. centered sub-Gaussian and pre-Gaussian random variables in $H$ with covariance operator $\Sigma$. For any integer $p\ge 2$ and any $u\ge 1$, it holds with probability at least $1-\exp(-u)$ that
\[
\bigg\|\frac{1}{N}\sum_{i=1}^{N} X_i^{\otimes p}-\E\, X^{\otimes p}\bigg\| \lesssim_p \|\Sigma\|^{p/2} \bigg(\sqrt{\frac{r(\Sigma)}{N}}+ \frac{r(\Sigma)^{p/2}}{N}+\sqrt{\frac{u}{N}}+ \frac{u^{p/2}}{N}\bigg).
\]
As a corollary,
 \begin{align*}
\E \bigg\|\frac{1}{N}\sum_{i=1}^{N} X_i^{\otimes p}-\E\, X^{\otimes p}\bigg\| \lesssim_p \|\Sigma\|^{p/2} \bigg(\sqrt{\frac{r(\Sigma)}{N}}+\frac{r(\Sigma)^{p/2}}{N}\bigg).
 \end{align*}
Moreover, if $X$ is Gaussian, then
\begin{align*}
\E \bigg\|\frac{1}{N}\sum_{i=1}^{N} X_i^{\otimes p}-\E\, X^{\otimes p}\bigg\| \asymp_p \|\Sigma\|^{p/2} \bigg(\sqrt{\frac{r(\Sigma)}{N}}+\frac{r(\Sigma)^{p/2}}{N}\bigg).
\end{align*} 
\end{theorem}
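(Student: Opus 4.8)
The plan is to obtain the high-probability upper bound — and hence the sub-Gaussian expectation bound — as a specialization of Theorem~\ref{thm:main2}, and to establish the matching Gaussian lower bound by a separate argument. The first step is to pass from the injective (operator) norm of the symmetric tensor $T_N:=\frac1N\sum_{i=1}^N X_i^{\otimes p}-\E\,X^{\otimes p}$ to its diagonal. Since $X_i^{\otimes p}$ and $\E\,X^{\otimes p}$ are symmetric elements of $H^{\otimes p}$, so is $T_N$, and the classical polarization inequality for symmetric $p$-linear forms gives $\sup_{v_1,\dots,v_p\in U_H}|T_N(v_1,\dots,v_p)|\le\frac{p^p}{p!}\sup_{v\in U_H}|T_N(v,\dots,v)|$, the reverse bound being trivial; hence
\begin{equation*}
\Big\|\tfrac1N\sum_{i=1}^N X_i^{\otimes p}-\E\,X^{\otimes p}\Big\|\ \asymp_p\ \sup_{v\in U_H}\Big|\tfrac1N\sum_{i=1}^N\langle X_i,v\rangle^p-\E\langle X,v\rangle^p\Big|,
\end{equation*}
which is exactly a multi-product empirical process of the kind treated in Theorem~\ref{thm:main2}, indexed by the family $\F=\{\langle\,\cdot\,,v\rangle:v\in U_H\}$ of bounded linear functionals.

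I would then apply Theorem~\ref{thm:main2} to this $\F$. Each functional $\langle\,\cdot\,,v\rangle$ is sub-Gaussian with $\psi_2$-norm $\lesssim\|\Sigma\|^{1/2}$ uniformly over $v\in U_H$, so the relevant ``scale'' of $\F$ is $\|\Sigma\|^{1/2}$; its geometric complexity is measured through the $L^2(\P)$ pseudometric $d(v,w)=(\E\langle X,v-w\rangle^2)^{1/2}=\|\Sigma^{1/2}(v-w)\|$. Because $X$ is pre-Gaussian, $d$ coincides with the intrinsic metric of the centered Gaussian $Y$ with covariance $\Sigma$, so by the majorizing measure theorem the chaining functional of $(U_H,d)$ entering Theorem~\ref{thm:main2} is comparable to $\E\sup_{v\in U_H}\langle Y,v\rangle=\E\|Y\|\asymp\sqrt{\mathrm{Tr}(\Sigma)}=\|\Sigma\|^{1/2}\sqrt{r(\Sigma)}$ (the last comparison using $r(\Sigma)\ge1$ and Gaussian concentration of $\|Y\|$). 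Substituting scale $\|\Sigma\|^{1/2}$ and complexity $\|\Sigma\|^{1/2}\sqrt{r(\Sigma)}$ into the estimate of Theorem~\ref{thm:main2} then gives the claimed tail bound; concretely, its two $r(\Sigma)$-dependent terms appear as $(\text{scale})^{p-1}(\text{complexity})/\sqrt N$ and $(\text{complexity})^p/N$, and the two $u$-dependent terms likewise, with $\|\Sigma\|^{1/2}\sqrt u$ in place of the complexity. Integrating this tail yields the sub-Gaussian expectation bound: writing the high-probability threshold as $a+b\sqrt u+cu^{p/2}$ with $a\asymp_p\|\Sigma\|^{p/2}(\sqrt{r(\Sigma)/N}+r(\Sigma)^{p/2}/N)$ and $b,c\asymp_p\|\Sigma\|^{p/2}/\sqrt N,\ \|\Sigma\|^{p/2}/N$, a layer-cake integration gives $\E\|T_N\|\lesssim_p a+b+c\lesssim_p a$, since $\|\Sigma\|^{p/2}/\sqrt N\le\|\Sigma\|^{p/2}\sqrt{r(\Sigma)/N}$ by $r(\Sigma)\ge1$.

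It remains to prove, for Gaussian $X$, the lower bounds $\E\|T_N\|\gtrsim_p\|\Sigma\|^{p/2}r(\Sigma)^{p/2}/N$ and $\E\|T_N\|\gtrsim_p\|\Sigma\|^{p/2}\sqrt{r(\Sigma)/N}$ separately (their sum is comparable to their maximum). For the first, conditional Jensen gives $\E\|T_N\|=\frac1N\E\big\|\sum_{i=1}^N(X_i^{\otimes p}-\E\,X^{\otimes p})\big\|\ge\frac1N\E\|X_1^{\otimes p}-\E\,X^{\otimes p}\|\ge\frac1N\big(\E\|X_1\|^p-\|\E\,X^{\otimes p}\|\big)$; using $\|x^{\otimes p}\|=\|x\|^p$, the Jensen bound $\E\|X_1\|^p\ge(\E\|X_1\|^2)^{p/2}=\mathrm{Tr}(\Sigma)^{p/2}$, and the Wick bound $\|\E\,X^{\otimes p}\|\lesssim_p\|\Sigma\|^{p/2}$, this gives $\E\|T_N\|\gtrsim_p\mathrm{Tr}(\Sigma)^{p/2}/N=\|\Sigma\|^{p/2}r(\Sigma)^{p/2}/N$ once $r(\Sigma)$ exceeds a $p$-dependent constant; in the complementary regime $r(\Sigma)=O_p(1)$ this term is dominated by $\|\Sigma\|^{p/2}/\sqrt N$, itself covered by the second bound. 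For the second, fix a top eigenvector $e_1$ of $\Sigma$ and a unit vector $u\perp e_1$, set $v_s=(e_1+su)/\sqrt{1+s^2}$, and consider the real polynomial $\psi_u(s):=(1+s^2)^{p/2}T_N(v_s,\dots,v_s)=\frac1N\sum_i(\langle X_i,e_1\rangle+s\langle X_i,u\rangle)^p-\E(\langle X,e_1\rangle+s\langle X,u\rangle)^p$, of degree $\le p$, which satisfies $|\psi_u(s)|\le 2^{p/2}\|T_N\|$ for $|s|\le1$. The Markov brothers' inequality then forces $|\psi_u'(0)|\le p^2\,2^{p/2}\|T_N\|$; since $\psi_u'(0)=\frac pN\sum_i\langle X_i,e_1\rangle^{p-1}\langle X_i,u\rangle=p\|\Sigma\|^{(p-1)/2}\langle W,u\rangle$ with $W:=\frac1N\sum_i\xi_i^{p-1}\Pi X_i$, $\xi_i:=\langle X_i,e_1\rangle/\|\Sigma\|^{1/2}$ and $\Pi$ the orthogonal projection onto $e_1^\perp$, taking the supremum over unit $u\in e_1^\perp$ yields $\|T_N\|\gtrsim_p\|\Sigma\|^{(p-1)/2}\|W\|$. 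Finally $W$ is a normalized sum of i.i.d.\ centered Hilbert-space vectors with $\E\|W\|^2\asymp_p\frac1N\mathrm{Tr}(\Pi\Sigma\Pi)=\frac1N\|\Sigma\|(r(\Sigma)-1)$ and bounded fourth-to-second moment ratio, so Paley--Zygmund gives $\E\|W\|\gtrsim_p\sqrt{\E\|W\|^2}\gtrsim_p\|\Sigma\|^{1/2}\sqrt{r(\Sigma)/N}$ when $r(\Sigma)\ge2$, hence $\E\|T_N\|\gtrsim_p\|\Sigma\|^{p/2}\sqrt{r(\Sigma)/N}$; the leftover regime $r(\Sigma)\in[1,2)$ is handled by the trivial bound $\|T_N\|\ge|T_N(e_1,\dots,e_1)|=\|\Sigma\|^{p/2}\big|\frac1N\sum_i\xi_i^p-\E[\xi_1^p]\big|$, whose expectation is $\asymp_p\|\Sigma\|^{p/2}/\sqrt N$.

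Given Theorem~\ref{thm:main2}, the upper half is essentially bookkeeping — the genuine difficulty (the generic-chaining bound for the multi-product process) is packaged in Theorem~\ref{thm:main2}. Within the present argument the delicate point is the sharpness of the Gaussian lower bound, and specifically the $\|\Sigma\|^{p/2}\sqrt{r(\Sigma)/N}$ rate: this rate is not seen by any single test tensor (each contributes only the slower $\|\Sigma\|^{p/2}/\sqrt N$), so it must be extracted from the full supremum, and the Markov-brothers step is the device I would use to isolate the ``mixed linear'' component $\langle W,u\rangle$ of the deviation while discarding the higher-degree-in-$u$ terms that could otherwise cancel it; once that reduction is in place, the accompanying second-moment estimates needed for the Paley--Zygmund step are routine.
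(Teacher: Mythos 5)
Your proposal is correct, and the upper-bound half is essentially the paper's own argument: reduce to the diagonal $\sup_{v\in U_H}|\frac1N\sum_i\langle X_i,v\rangle^p-\E\langle X,v\rangle^p|$ (the paper invokes the exact equality for symmetric tensors, you invoke the polarization constant $p^p/p!$ — either suffices for a $\lesssim_p$ statement), apply Theorem~\ref{thm:main2} to $\F=\{\langle\cdot,v\rangle:v\in U_H\}$, and identify $d_{\psi_2}(\F)\asymp\|\Sigma\|^{1/2}$ and $\gamma(\F,\psi_2)\asymp\sqrt{\mathrm{Tr}(\Sigma)}$ via sub-Gaussianity, pre-Gaussianity and the majorizing measure theorem; the tail integration is likewise identical. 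Your first lower bound ($r(\Sigma)^{p/2}/N$ via conditional Jensen on $X_1$, $\E\|X_1\|^p\ge\mathrm{Tr}(\Sigma)^{p/2}$, and a Wick bound on $\|\E X^{\otimes p}\|$) also matches the paper's Proposition~\ref{prop:lowerbound}. Where you genuinely diverge is the $\|\Sigma\|^{p/2}\sqrt{r(\Sigma)/N}$ lower bound. The paper adapts the Koltchinskii--Lounici scheme: it writes $\|T_N\|=\sup_v\|\frac1N\sum_i\langle X_i,v\rangle^{p-1}X_i-\E\langle X,v\rangle^{p-1}X\|$, splits $X_i$ into its $\Sigma v$-component and an independent remainder $X_i'$, and applies conditional Jensen twice to isolate $\E\|X'\|\cdot\E(\frac1N\sum_i\langle X_i,v\rangle^{2(p-1)})^{1/2}/\sqrt N$, finally taking $v=e_1$. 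You instead restrict to the two-dimensional slice $v_s=(e_1+su)/\sqrt{1+s^2}$, use the Markov brothers' inequality on the degree-$p$ polynomial $s\mapsto(1+s^2)^{p/2}T_N(v_s,\dots,v_s)$ to bound $\|T_N\|\gtrsim_p\|\Sigma\|^{(p-1)/2}\|W\|$ with $W=\frac1N\sum_i\xi_i^{p-1}\Pi X_i$, and then apply Paley--Zygmund to $\|W\|$. The extracted object is the same in both proofs (for $v=e_1$ one has $X_i'=\Pi X_i$), but the mechanisms differ: the paper's route leans on the exact distributional identity for the conditionally Gaussian sum and on the norm identity for symmetric tensors, while yours replaces both with the polynomial-derivative extraction plus a routine fourth-moment/second-moment comparison; your version is somewhat more self-contained (it never needs the equality between the injective norm and the $(p-1,1)$-restricted supremum) at the cost of worse $p$-dependent constants and of checking the moment ratio for $\xi_1^{p-1}\Pi X_1$, which is indeed routine for Gaussians. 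Both the edge cases you flag ($r(\Sigma)=O_p(1)$ and $r(\Sigma)\in[1,2)$, handled by the single-direction $\asymp_p\|\Sigma\|^{p/2}/\sqrt N$ fluctuation) are treated in the paper by an equivalent convex-combination device, so no gap remains.
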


The proof of Theorem \ref{thm:main1} is carried out in Section \ref{sec:concentration}.
Although upper bounds for \eqref{eq:opnormdeviation} have been extensively studied \cite{giannopoulos2000concentration,guedon2007lp, mendelson2008weakly, adamczak2010quantitative, vershynin2011approximating, mendelson2021approximating,even2021concentration,zhivotovskiy2024dimension}, and despite the explosive growth of research on concentration inequalities for random tensors \cite{vershynin2020concentration,zhou2021sparse,even2021concentration,bamberger2022hanson,jiang2022near,boedihardjo2024injective,bandeira2023matrix,bandeira2024matrix,santos2023almost,bandeira2024geometric}, Theorem \ref{thm:main1} is, to the best of our knowledge, the first to establish \emph{sharp dimension-free} concentration inequalities and expectation bounds for the deviation \eqref{eq:opnormdeviation} under sub-Gaussian assumptions, without superfluous logarithmic factors. Moreover, Theorem \ref{thm:main1} shows that our expectation bound is sharp for Gaussian data. Similarly to \cite{koltchinskii2017concentration}, our results can easily be extended to Banach-valued random variables, but we focus on the Hilbert space setting for ease of exposition.

For sub-Gaussian and pre-Gaussian random variables satisfying that, for fixed $K>0$,
\begin{equation}\label{eq:KsubG}
    \|\langle X,v \rangle\|_{\psi_2}\le K  \|\langle X,v \rangle\|_{L_2}, \qquad \forall \ v\in H,
\end{equation}
a straightforward modification of the proof of Theorem \ref{thm:main1} yields
\[
\E \bigg\|\frac{1}{N}\sum_{i=1}^{N} X_i^{\otimes p}-\E\, X^{\otimes p}\bigg\| \lesssim_p K^p \|\Sigma\|^{p/2} \bigg(\sqrt{\frac{r(\Sigma)}{N}}+\frac{r(\Sigma)^{p/2}}{N}\bigg),
\]
as well as a similar high-probability upper bound. For a centered Gaussian random variable, \eqref{eq:KsubG} holds with $K = \sqrt{8/3}$. When $p=2$, the dependence on $K^2$ is consistent with the literature on covariance estimation; see \cite[Theorem 4.7.1]{vershynin2018high}. For $p> 2$, the dependence on $K^p$ agrees with the result of  \cite{zhivotovskiy2024dimension}. The notation $\lesssim_p$ indicates that the implicit constant depends on the tensor order $p$, and this dependence is generally super-exponential in $p$. This can be seen from the fact that, when $p$ is even and $X\sim \mcN(0,\Sigma)$,
\[
\|\E\, X^{\otimes p}\|= (p-1)!!\|\Sigma\|^{p/2},\quad \text{ and } \quad (p-1)!!\asymp \sqrt{2 (p-1)}\Big(\frac{p-1}{e}\Big)^{\frac{p-1}{2}}\asymp e^{\frac{1}{2}p \log p}.
\]
In Theorem \ref{thm:main1} and in the comparisons that we will provide in the remainder of this subsection, we assume $K\asymp 1$ for simplicity. In Remark \ref{rem:compare_1}, we compare with concentration results for sums of simple tensors, while in Remark \ref{rem:discuss_sample_cov} we discuss the special case $p=2.$

\begin{remark}\label{rem:compare_1}

The deviation \eqref{eq:opnormdeviation} can be expressed as
\begin{align}\label{eq:compare_aux0}
\bigg\|\frac{1}{N}\sum_{i=1}^{N} X_i^{\otimes p}-\E\, X^{\otimes p}\bigg\|=\sup _{\|v\|= 1}\bigg|\frac{1}{N} \sum_{i=1}^N\left\langle X_i, v\right\rangle^p-\E\langle X, v\rangle^p\bigg|,
\end{align}
since for symmetric tensors the supremum in \eqref{eq:opnormdeviation} is attained by a single element $v\in H$, see e.g. \cite[(1.6)]{zhivotovskiy2024dimension}, \cite[Section 2.3]{nemirovski2004interior} and \cite[Remark 4.14, Proposition 4.15]{bandeira2024geometric}. From our proof of Theorem \ref{thm:main1}, the same high-probability and in-expectation bounds in Theorem \ref{thm:main1} also hold for $
   \sup _{\|v\|= 1}\left |\frac{1}{N} \sum_{i=1}^N\left |\langle X_i, v\right\rangle|^p-\E|\langle X, v\rangle|^p\right|$, where $\langle X_i,v \rangle^p$ is replaced by its absolute value.

Using majorizing measures,  \cite[Theorem 3]{guedon2007lp} establishes the bound
\begin{align}\label{eq:compare_aux1}
\E \sup_{\|v\|\le 1}\bigg|\frac{1}{N} \sum_{i=1}^N | \langle X_i, v\rangle|^p-\E|\langle X, v\rangle|^p\bigg|\lesssim_p \|\Sigma\|^{p/2}\left(\sqrt{\varepsilon}+\varepsilon\right),
\end{align}
where 
\[
\varepsilon=\frac{\log N}{N}\cdot \frac{\E \max_{1\le i\le N}\|X_i\|^p}{\|\Sigma\|^{p/2}}.
\]
Notably, \cite[Theorem 3]{guedon2007lp} considered the more general case where the supremum in \eqref{eq:compare_aux1} is taken over a symmetric convex body $K\subset \R^d$. Here, we focus on the specific case where $K$ is the unit ball $\{v:\|v\|\le 1\}$. In the sub-Gaussian setting, applying \cite[Lemma 2.7]{zhivotovskiy2024dimension} together with a union bound gives
\[
\bigg(\mathbb{E} \max _{1 \leq i \leq N}\left\|X_i\right\|^p\bigg)^{1/p} \lesssim_p \sqrt{\mathrm{Tr}(\Sigma)+ \|\Sigma\|\log N}=\sqrt{\|\Sigma\| \left(r(\Sigma) +\log N\right)}.
\]
For Gaussian data, this upper bound is optimal up to a constant depending only on $p$. Consequently, this yields the following upper bound for the value of $\varepsilon$ in \eqref{eq:compare_aux1}:
\[
\varepsilon \lesssim_p \frac{(\log N)\left(r(\Sigma)^{p/2}+(\log N)^{p/2}\right)}{N}.
\]

Employing estimates of the metric entropy of ellipsoids, \cite[Theorem 3]{even2021concentration} shows that, for i.i.d. centered sub-Gaussian random vectors $X, X_1,\ldots,X_N$ in $\R^d$, 
\begin{equation}\label{eq:boundaux}
  \E \bigg\|\frac{1}{N}\sum_{i=1}^{N} X_i^{\otimes p}-\E\, X^{\otimes p}\bigg\|\lesssim_p   \|\Sigma\|^{p/2}\sqrt{\frac{(\log N)^p(r(\Sigma)+\log d)^{p+1}}{N}}.
\end{equation}
Clearly, the in-expectation bound  in Theorem \ref{thm:main1} provides a direct improvement over the bounds in \eqref{eq:compare_aux1} and \eqref{eq:boundaux}.

More recently, \cite{zhivotovskiy2024dimension} used the variational principle and the PAC-Bayesian method to study \eqref{eq:compare_aux0}. We now compare Theorem \ref{thm:main1} with \cite[Theorem 1.7]{zhivotovskiy2024dimension}. Under their assumption that $N \ge r(\Sigma)^{p-1}$, we observe that $\sqrt{\frac{r(\Sigma)}{N}}\ge \frac{r(\Sigma)^{p/2}}{N},$ and taking $u=r(\Sigma)$ in Theorem \ref{thm:main1} yields that with probability at least $1-\exp (-r(\Sigma))$,
\[
\bigg\|\frac{1}{N}\sum_{i=1}^{N} X_i^{\otimes p}-\E\, X^{\otimes p}\bigg\|=\sup_{\|v\|=1}\bigg|\frac{1}{N} \sum_{i=1}^N\left\langle X_i, v\right\rangle^p-\E\langle X, v\rangle^p\bigg|  \overset{\hspace{-0.1cm}\text{($\star$)}}{\lesssim_p} \|\Sigma\|^{p/2} \sqrt{\frac{r(\Sigma)}{N}}.
\]
For $H = \R^d$ and under the condition $N \ge r(\Sigma)^{p-1}$, \cite[Theorem 1.7]{zhivotovskiy2024dimension} establishes the bound ($\star$), but only with probability at least $1-cN\exp(-\sqrt{r(\Sigma)})$. 
Theorem \ref{thm:main1} hence directly improves \cite[Theorem 1.7]{zhivotovskiy2024dimension} for sub-Gaussian data. Moreover, 
our high-probability bound can be directly integrated to derive an in-expectation bound, which is not provided in \cite[Theorem 1.7]{zhivotovskiy2024dimension}. The proof of \cite[Theorem 1.7]{zhivotovskiy2024dimension} relies on a combination of the variational inequality approach and the decoupling-chaining method developed in \cite{adamczak2010quantitative, talagrand2022upper}. In the decoupling-chaining argument \cite[Proposition 14.3.3]{talagrand2022upper}, there is a crucial term: $\max_{1\le i\le N} \|X_i\|$. \cite{zhivotovskiy2024dimension} used a union bound to control this term, which results in a large prefactor $N$ in the probability estimate. Since for Gaussian data it holds that 
\[
\E \max_{1\le i\le N} \|X_i\|\asymp \sqrt{\|\Sigma\|(r(\Sigma)+\log N)},
\]
an additional logarithmic factor $\sqrt{\log N}$ is likely unavoidable when using the approach in \cite{zhivotovskiy2024dimension} to derive an in-expectation bound. Finally, in contrast to \cite[Theorem 1.7]{zhivotovskiy2024dimension}, our proof of Theorem \ref{thm:main1} shows a bound of the form ($\star$) for arbitrary $p \ge 2,$ not necessarily an integer.

\end{remark}

\begin{remark}\label{rem:discuss_sample_cov}
When $p=2$, the sum of simple random tensors reduces to the sample covariance. In this case, Theorem \ref{thm:main1} agrees with the optimal bounds established in \cite[Theorem 4, Theorem 9]{koltchinskii2017concentration}. 
An important area of research in probability and statistics focuses on the non-asymptotic analysis of sample covariance operators, particularly their estimation error in the spectral (operator) norm relative to its population version. 
Recent references include, for instance, \cite{rudelson1999random,vershynin2010introduction,tropp2015introduction,tropp2016expected,liaw2017simple,koltchinskii2017concentration,minsker2017some,van2017structured,vershynin2018high,tikhomirov2018sample,han2022exact,bandeira2023matrix,brailovskaya2024universality,bandeira2024matrix,zhivotovskiy2024dimension}. A key milestone  was achieved by Koltchinskii and Lounici \cite{koltchinskii2017concentration}, who established that for i.i.d. centered Gaussian random variables $X, X_1,\ldots,X_N$ with covariance operator $\Sigma$,
\begin{align}\label{eq:sample_cov}
 \E \bigg\|\frac{1}{N}\sum_{i=1}^{N} X_i \otimes X_i- \mathbb{E}\, X \otimes X\bigg\|\asymp \|\Sigma\| \bigg(\sqrt{\frac{r(\Sigma)}{N}}+\frac{r(\Sigma)}{N}\bigg).
\end{align}
The upper bound \eqref{eq:sample_cov} in \cite{koltchinskii2017concentration} follows from results on quadratic empirical processes developed in \cite{klartag2005empirical,mendelson2010empirical,dirksen2015tail,bednorz2014} that extend beyond Gaussian data. Alternative proofs exist for Gaussian data outside the empirical process framework. In particular, \cite{van2017structured} proved \eqref{eq:sample_cov} using decoupling and Slepian-Fernique comparison inequalities. More recently, \cite{han2022exact} obtained a sharp version of \eqref{eq:sample_cov} with optimal constants via the Gaussian min-max theorem. Beyond the operator norm, \cite{puchkin2025sharper} derived dimension-free bounds on the Frobenius distance between the sample covariance and the population covariance.
\end{remark}

\subsection{ $L_p$ empirical processes}\label{subsec:L_p}
This subsection contains the statement of our second main result, Theorem~\ref{thm:main2}, which gives a general upper bound on the suprema of $L_p$ empirical processes. Theorem~\ref{thm:main2} will be key to establish our upper bounds in Theorem \ref{thm:main1}.  Let $X, X_1, \ldots, X_N \iid \mu$ be a sequence of random variables on a probability space $(\Omega, \mu)$.  For $p \ge 1 $ (not necessarily an integer), we consider the centered $L_p$ empirical process indexed by a class $\mathcal{F}$ of functions on $(\Omega, \mu)$ given by 
\begin{align}\label{eq:L_p_empiricalprocess}
    f \mapsto 
    \frac{1}{N} \sum_{i=1}^N f^p(X_i) 
    - \E f^p(X),
    \qquad f \in \mcF.
\end{align}
In our proof of Theorem \ref{thm:main1}, the class $\F$ will be taken to contain all linear functionals in $H$ with norm one.  For $p = 1$, \eqref{eq:L_p_empiricalprocess} recovers the standard empirical process \cite{van2023weak, talagrand2022upper}, and for $p = 2$ the quadratic empirical process extensively studied, e.g., in \cite{rudelson1999random,klartag2005empirical,mendelson2007reconstruction,adamczak2010quantitative,mendelson2010empirical,mendelson2012generic,dirksen2015tail,bednorz2014,mendelson2016upper,bednorz2016bounds, koltchinskii2017concentration,han2022exact}. Beyond its theoretical significance, the quadratic empirical process plays a crucial role in various statistical applications, including covariance estimation as discussed in Remark \ref{rem:discuss_sample_cov}. Our goal is to bound
\[
\sup_{f\in \F} \bigg|\frac{1}{N}\sum_{i=1}^N f^p(X_i) -\E f^p(X)\bigg|
\]
for any $p\ge 2$ in terms of quantities that reflect the geometric complexity of the indexing class $\F.$  To that end,  for any function $f$ on $(\Omega, \mu)$, we introduce the Orlicz $\psi_2$-norm of $f$ given by
\[
\|f\|_{\psi_2}:=\inf \left\{c>0: 
\mathbb{E}_{X \sim \mu}\insquare{ \exp \inparen{\frac{|f(X)|^2}{c^2}}} \leq 2\right\},
\]
with the convention $\inf \emptyset = \infty.$ 
Our upper bound will depend on $d_{\psi_2}(\F) := \sup_{f \in \F}\|f\|_{\psi_2}$ and on Talagrand's functional $\gamma(\mcF, \psi_2)$, whose definition we now recall.

\begin{definition}[{Talagrand's $\gamma$ functional \cite{talagrand2022upper} }]\label{def:admissible_sequence_gamma_functional}
Let $(\F, d)$ be a metric space. An admissible sequence is an increasing sequence $(\F_s)_{s\ge 0} \subset \F$ which satisfies $\F_s \subset \F_{s+1}, \left|\F_0\right|=1, \left|\F_s\right| \leq 2^{2^s}$ for $s \ge 1,$ and $\bigcup_{s=0}^{\infty}\F_s$ is dense in $\F$. 
Let
\[
\gamma(\F, d) :=\inf \sup _{f \in \F} \sum_{s \geq 0} 2^{s /2} d\left(f, \F_s\right),
\]
where the infimum is taken over all admissible sequences. We write $\gamma(\mcF, \psi_2)$ when the distance on $\mathcal{\F}$ is induced by the $\psi_2$-norm.
\end{definition}

We are ready to state our second main result. Its proof can be found in Section \ref{sec:multi}.
\begin{theorem}\label{thm:main2} Assume $0\in \F$ or $\F$ is symmetric (i.e., $f\in \F$ implies $-f\in \F$). For any $p\ge 2$ and $u\ge 1$, it holds with probability at least $1-\exp(-u^2 (\gamma(\F,\psi_2)/d_{\psi_2}(\F))^2)$ that
\[
\sup_{f\in \F} \bigg|\frac{1}{N}\sum_{i=1}^N f^p(X_i) -\E f^p(X)\bigg|\lesssim_p u \frac{\gamma(\F,\psi_2)d^{p-1}_{\psi_2}(\F)}{\sqrt{N}}+u^p\frac{\gamma^p(\F,\psi_2)}{N}.
\]
As a corollary,
\[
\E \sup_{f\in \F} \bigg|\frac{1}{N}\sum_{i=1}^N f^p(X_i) -\E f^p(X)\bigg| \lesssim_p \frac{\gamma(\F,\psi_2)d^{p-1}_{\psi_2}(\F)}{\sqrt{N}}+\frac{\gamma^{p}(\F,\psi_2)}{N}.
\]
\end{theorem}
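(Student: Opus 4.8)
The plan is to run a generic chaining argument on the centered process $Z_f:=\frac{1}{N}\sum_{i=1}^N f^p(X_i)-\E f^p(X)$, $f\in\F$. I would bound $\sup_{f\in\F}|Z_f-Z_{f_0}|$ for a base point $f_0\in\F$ by chaining along an admissible sequence, and treat $|Z_{f_0}|$ separately: if $0\in\F$, take $f_0=0$ so that $Z_{f_0}=0$; if $\F$ is symmetric, take any $f_0\in\F$ and control the single random variable $|Z_{f_0}|$ by the tail bound from Step~2. This hypothesis also ensures $\mathrm{diam}_{\psi_2}(\F)\asymp d_{\psi_2}(\F)$ (for symmetric $\F$, $d_{\psi_2}(f,-f)=2\|f\|_{\psi_2}$; for $0\in\F$, $d_{\psi_2}(f,0)=\|f\|_{\psi_2}$), which I will use at the end. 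The chaining needs two ingredients: Orlicz-norm control of the increments $Z_f-Z_g$, and a Bernstein-type tail bound for sums of the (heavy-tailed) summands.

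\textbf{Step 1 (increment estimates).} Using the elementary bound $\bigl||a|^p-|b|^p\bigr|\le p\,\max(|a|,|b|)^{p-1}\bigl||a|-|b|\bigr|\le p\,2^{p-2}\bigl(|a|^{p-1}+|b|^{p-1}\bigr)|a-b|$ for $p\ge 2$, together with the generalized Hölder inequality for Orlicz quasi-norms ($\|UV\|_{\psi_r}\lesssim\|U\|_{\psi_q}\|V\|_{\psi_s}$ whenever $1/r=1/q+1/s$) and the scaling identity $\|h^k\|_{\psi_{2/k}}=\|h\|_{\psi_2}^k$, I would prove
\[
\bigl\|\,(f^p-g^p)(X)-\E(f^p-g^p)(X)\,\bigr\|_{\psi_{2/p}}\ \lesssim_p\ d_{\psi_2}(\F)^{p-1}\,\|f-g\|_{\psi_2},
\]
and likewise $\mathrm{Var}\bigl((f^p-g^p)(X)\bigr)\lesssim_p \bigl(d_{\psi_2}(\F)^{p-1}\|f-g\|_{\psi_2}\bigr)^2$; centering costs only a $p$-dependent constant, and for integer $p$ one may instead factor $f^p-g^p=(f-g)\sum_{j=0}^{p-1}f^jg^{p-1-j}$ and bound the multiplier in $\psi_{2/(p-1)}$. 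The essential point is that the multiplier is controlled \emph{uniformly} by $d_{\psi_2}(\F)^{p-1}$, so the chaining metrics are mere rescalings of the $\psi_2$-metric.

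\textbf{Step 2 (sub-Weibull Bernstein) and Step 3 (mixed-tail chaining).} Combining Step~1 with a Bernstein inequality for i.i.d.\ centered $\psi_{2/p}$ variables, namely $\P\bigl(|\frac1N\sum_i W_i|>t\bigr)\le 2\exp\bigl(-c_p\min(Nt^2/b^2,(Nt/b)^{2/p})\bigr)$ with $b=\|W\|_{\psi_{2/p}}$ (standard; cf.\ \cite{adamczak2010quantitative,dirksen2015tail}), the increment $Z_f-Z_g$ has a mixed sub-Gaussian/sub-Weibull tail governed by the two metrics $d_2(f,g)\asymp_p \tfrac{d_{\psi_2}(\F)^{p-1}}{\sqrt N}\|f-g\|_{\psi_2}$ and $d_1(f,g)\asymp_p \tfrac{d_{\psi_2}(\F)^{p-1}}{N}\|f-g\|_{\psi_2}$. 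Feeding this into a mixed-tail generic chaining bound in the spirit of \cite[Theorem~3.2]{dirksen2015tail} and \cite{talagrand2022upper} (using a single admissible sequence near-optimal for both functionals) yields, with probability at least $1-e^{-v}$,
\[
\sup_{f\in\F}|Z_f-Z_{f_0}|\ \lesssim_p\ \frac{d_{\psi_2}(\F)^{p-1}}{\sqrt N}\,\gamma(\F,\psi_2)+\frac{d_{\psi_2}(\F)^{p-1}}{N}\,\gamma_{2/p}(\F,\psi_2)+\sqrt{v}\,\frac{d_{\psi_2}(\F)^{p}}{\sqrt N}+v^{p/2}\,\frac{d_{\psi_2}(\F)^{p}}{N},
\]
where $\gamma_{2/p}$ is Talagrand's functional of order $2/p$ and the last two terms (coming from the $d_2$- and $d_1$-diameters) also dominate the single-point bound for $|Z_{f_0}|$ obtained from Step~2.

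\textbf{Step 4 (simplification and tuning).} It remains to eliminate $\gamma_{2/p}$. I would prove the elementary lemma that $\mathrm{diam}(T)^{p-1}\,\gamma_{2/p}(T,d)\lesssim_p \gamma_2(T,d)^p$ for any metric space $(T,d)$: take a near-optimal admissible sequence for $\gamma_2$ and optimize the non-increasing sequence $s\mapsto d(t,T_s)$ subject to $\sum_s 2^{s/2}d(t,T_s)\lesssim\gamma_2(T,d)$ and $d(t,T_s)\le\mathrm{diam}(T)$, the worst case saturating the budget on an initial block of scales. Since $\mathrm{diam}_{\psi_2}(\F)\asymp d_{\psi_2}(\F)$, the second chaining term becomes $\lesssim_p \gamma(\F,\psi_2)^p/N$. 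Finally, choosing $v=u^2\bigl(\gamma(\F,\psi_2)/d_{\psi_2}(\F)\bigr)^2$ (so $e^{-v}$ is exactly the stated probability, and $v\ge u^2\ge 1$ since $\gamma(\F,\psi_2)\gtrsim\mathrm{diam}_{\psi_2}(\F)\asymp d_{\psi_2}(\F)$) turns the deviation terms into $u\,\gamma(\F,\psi_2)d_{\psi_2}(\F)^{p-1}/\sqrt N$ and $u^p\gamma(\F,\psi_2)^p/N$, which dominate the $u$-free chaining terms because $u\ge1$; this is the claimed high-probability bound, and integrating it gives the expectation corollary. The main obstacle will be Step~3: executing the mixed-tail chaining so that the sub-Gaussian regime produces exactly $\gamma_2$ with no spurious $\log N$, the heavy-tailed regime produces the order-$2/p$ functional, and the uniform multiplier bound from Step~1 cleanly decouples $\|f-g\|_{\psi_2}$ from the scale $d_{\psi_2}(\F)^{p-1}$ at every level; the $\gamma_{2/p}\mapsto\gamma^p$ reduction is short but is what collapses the bound to a single complexity functional, and getting its dependence on $\mathrm{diam}(\F)$ right is where the structural hypothesis on $\F$ enters.
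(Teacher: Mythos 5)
Your Steps 1--3 are essentially sound: the increment estimate $\|(f^p-g^p)(X)-\E(f^p-g^p)(X)\|_{\psi_{2/p}}\lesssim_p d_{\psi_2}^{p-1}(\F)\,\|f-g\|_{\psi_2}$, the $\psi_{2/p}$ Bernstein inequality (this is the paper's Lemma \ref{lemma:sub-exponential-1}), and the deviation terms, which do tune correctly under $v=u^2(\gamma(\F,\psi_2)/d_{\psi_2}(\F))^2$ using $\gamma(\F,\psi_2)\gtrsim d_{\psi_2}(\F)$. The fatal gap is Step 4: the lemma $\mathrm{diam}(T)^{p-1}\,\gamma_{2/p}(T,d)\lesssim_p\gamma_2(T,d)^p$ is false. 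Your optimization is wrong about where the worst case sits: subject to $\sum_s 2^{s/2}a_s\le\gamma_2$, $a_s\le\mathrm{diam}(T)$ and $a_s$ non-increasing, the sequence $a_s=\min\{\mathrm{diam}(T),\,\gamma_2\,2^{-s/2}s^{-2}\}$ is admissible yet $\sum_s 2^{sp/2}a_s\ge c\,\gamma_2\sum_s 2^{s(p-1)/2}s^{-2}=\infty$; the budget is saturated in the tail of the chain, not on an initial block. More decisively, the Sudakov-type lower bound $\gamma_{2/p}(T,d)\gtrsim\sup_{\eps>0}\eps\,(\log N(T,d,\eps))^{p/2}$ shows that any class with entropy $\log N(T,d,\eps)\asymp\eps^{-\beta}$ for some $\beta\in(2/p,2)$ has $\gamma_2(T,d)<\infty$ and finite diameter but $\gamma_{2/p}(T,d)=\infty$. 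Such classes are realized already by linear functionals indexed by an ellipsoid with semi-axes $a_k\asymp k^{-1/\beta}$ under a standard Gaussian (so the $\psi_2$-metric is Euclidean); for them the theorem is true while your chaining bound is vacuous. Already at $p=2$ this is the classical obstruction: naive mixed-tail chaining for quadratic empirical processes produces $\gamma_1$, which is not controlled by $\gamma_2^2/\mathrm{diam}$, and this is precisely why the sharp bounds of Krahmer--Mendelson--Rauhut, Dirksen, Bednorz and Mendelson require arguments that exploit the product structure.

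The paper circumvents this by never chaining the process $f\mapsto\frac1N\sum_i f^p(X_i)-\E f^p(X)$ through its sub-Weibull increments. Instead it symmetrizes, telescopes the links $(\pi_{s+1}f)^p-(\pi_s f)^p\approx\Delta_s f\cdot(|\pi_{s+1}f|^{p-1}+|\pi_s f|^{p-1})$, and applies H\"offding's inequality conditionally on the data with the coordinates split into the $j_s-1$ largest and the rest (Lemma \ref{lemma:mendelson}, Corollary \ref{coro:1}). The small-coordinate contribution of the $s$-th link is then bounded by $u2^{s/2}\cdot N^{1/4}\|\Delta_s f\|_{L_8}\cdot N^{1/4}d_{\psi_2}^{p-1}(\F)$ via H\"older, and the large-coordinate contribution by $u2^{s/2}\|\Delta_s f\|_{(cu^22^s)}$ times the uniform $\ell_{2(p-1)}$-norm bound of Theorem \ref{thm:l_m_norm}. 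The crucial point is that \emph{both} contributions remain proportional to $\|\Delta_s f\|_{\psi_2}$ (or the graded norm) with weight $2^{s/2}$, so the sum over $s$ produces $\gamma(\F,\psi_2)$ rather than $\gamma_{2/p}$; the heavy-tailedness is absorbed into the multiplicative factor $\sup_{f}(\sum_i f^{2(p-1)}(X_i))^{1/2}\lesssim_p\gamma^{p-1}(\F,\psi_2)+N^{1/2}d_{\psi_2}^{p-1}(\F)+\cdots$, whose proof is itself a separate chaining argument. If you want to complete your route, you must replace Step 4 by an argument of this type; no metric-space inequality between $\gamma_{2/p}$ and $\gamma_2$ can do the job.
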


\begin{remark}
An alternative formulation of the tail bound states that, for any $u\ge 1$, it holds with probability at least $1-\exp(-u)$ that
\[
\sup _{f \in \F}\bigg|\frac{1}{N} \sum_{i=1}^N f^p\left(X_i\right)-\mathbb{E} f^p(X)\bigg| \lesssim_p  \frac{\gamma\left(\F, \psi_2\right)d^{p-1}_{\psi_2}(\F)}{\sqrt{N}}+ \frac{\gamma^p\left(\mathcal{F}, \psi_2\right)}{N}+ d^{p}_{\psi_2}(\F) \bigg(\sqrt{\frac{u}{N}}+ \frac{u^{p/2}}{N}\bigg).
\] 
The above high-probability bound is optimal. When $p=2$, it matches the tail bound obtained in \cite{dirksen2015tail,bednorz2014}, see also \cite[(1.18)]{mendelson2016upper} and \cite[Theorem 8]{koltchinskii2017concentration}. As noted in \cite{mendelson2016upper}, the optimality can be seen from setting $t=$ $u d_{\psi_2}^2(\F) / \sqrt{N}$ in Bernstein's inequality applied to $f^2$. When $p>2$, the optimality can be deduced from the $\alpha$-sub-exponential inequality \cite[Corollary 1.4]{gotze2021concentration}; see Lemma \ref{lemma:sub-exponential-1} for a formal statement. From our proof, the same high-probability and in-expectation bounds in Theorem \ref{thm:main2} also hold for the empirical process $
\sup_{f\in \F}\left|\frac{1}{N}\sum_{i=1}^N |f|^p(X_i)-\E|f|^p(X)\right|$, where $f^p(X_i)$ is replaced by its absolute value.
    
\end{remark}

\begin{remark}\label{remark:general_p}
For $p\ge 3/2$, a minor modification of our proof yields
\[
\E \sup_{f\in \F} \bigg|\frac{1}{N}\sum_{i=1}^N f^p(X_i) -\E f^p(X)\bigg| \lesssim_p \frac{\gamma(\F,\psi_2)d^{p-1}_{\psi_2}(\F)}{\sqrt{N}}+\frac{\gamma^{p}(\F,\psi_2)}{N}+\frac{\gamma^{3/2}(\F,\psi_2)d^{p-3/2}_{\psi_{2}}(\F)}{N^{3/4}}.
\]
If $p\ge 2$, the third term on the right-hand side is dominated by the first two terms. However, for $p\in [3/2,2)$, the third term remains significant and cannot be neglected. A natural question is to establish sharp concentration inequalities and expectation bounds for $
    \sup_{f\in \F}\left|\frac{1}{N}\sum_{i=1}^N |f|^p(X_i)-\E|f|^p(X)\right|$ for any $p>0$. We leave this question for future investigation.
\end{remark}

\section{Concentration of simple random tensors}\label{sec:concentration}

This section contains the proof of our first main result, Theorem \ref{thm:main1}. The upper bound for sub-Gaussian data follows from an application of Theorem \ref{thm:main2}, where the class $\F$ consists of all linear functionals in $H$ with norm one. To establish the matching lower bound for Gaussian data, we adapt techniques from \cite[Theorem 4]{koltchinskii2017concentration} to the higher-order setting; see Proposition \ref{prop:lowerbound} for the proof.

\begin{proof}[Proof of Theorem \ref{thm:main1}] By the expression \eqref{eq:compare_aux0} and Theorem \ref{thm:main2},
we have
\begin{align*}
 \E \bigg\|\frac{1}{N}\sum_{i=1}^{N} X_i^{\otimes p}-\E\, X^{\otimes p}\bigg\|&= \E \sup_{\|v\|= 1}\bigg|\frac{1}{N} \sum_{i=1}^N\left\langle X_i, v\right\rangle^p-\E\langle X, v\rangle^p\bigg|\\
   &=
\E \sup_{f\in \F} \bigg|\frac{1}{N}\sum_{i=1}^N f^p(X_i) -\E f^p(X)\bigg|\\ &\lesssim_p \frac{\gamma(\F,\psi_2)d^{p-1}_{\psi_2}(\F)}{\sqrt{N}}+\frac{\gamma^{p}(\F,\psi_2)}{N},
\end{align*}
where $\mathcal{F}:=\left\{\langle\cdot, v\rangle: v \in U_{H}\right\}, U_{H}:=\left\{v \in H:\|v\| =1\right\}$. 
It is clear that $\F$ is symmetric. Since $X$ is sub-Gaussian, the $\psi_2$-norm of linear functionals is equivalent to the $L_2$-norm. Hence,
\begin{align*}
d_{\psi_2}(\F)=\sup_{f\in\mathcal{F}}\|f\|_{\psi_2}\asymp \sup_{f\in\mathcal{F}}\|f\|_{L_2}=\sup_{\|v\|= 1}\left(\E\langle X,v\rangle^2\right)^{1/2}=\|\Sigma\|^{1/2}.
\end{align*}
Moreover, since $X$ is pre-Gaussian, there exists a centered Gaussian random variable $Y$ in $H$ with the same covariance $\Sigma$. This means that
\[
d_Y(u, v)
:= \left(\mathbb{E}(\langle Y, u\rangle-\langle Y, v\rangle)^2\right)^{1/2}=\langle u-v, \Sigma (u-v) \rangle^{1/2}
=\|\langle\cdot, u\rangle-\langle\cdot, v\rangle\|_{L_2(\mu)}, \quad u, v \in U_{H},
\]
where $\mu$ is the law of $X$. Using Talagrand's majorizing-measure theorem \cite[Theorem 2.10.1]{talagrand2022upper}, we deduce that
\[
\gamma\left(\mathcal{F}, \psi_2\right)\asymp \gamma\left(\mathcal{F}, L_2\right)=\gamma\left(U_{H}, d_Y\right) \asymp \mathbb{E} \sup _{u \in U_{H}}\langle Y, u\rangle = \E \|Y\| \asymp (\E \|Y\|^2)^{1/2}=\mathrm{Tr}(\Sigma)^{1/2}.
\]
Therefore, we obtain the in-expectation upper bound for sub-Gaussian data:
\begin{align*}
 \E \bigg\|\frac{1}{N}\sum_{i=1}^{N} X_i^{\otimes p}-\E\, X^{\otimes p}\bigg\| &\lesssim_p \frac{\gamma(\F,\psi_2)d^{p-1}_{\psi_2}(\F)}{\sqrt{N}}+\frac{\gamma^{p}(\F,\psi_2)}{N}
 \asymp \|\Sigma\|^{p/2} \bigg(\sqrt{\frac{r(\Sigma)}{N}}+\frac{r(\Sigma)^{p/2}}{N}\bigg).
\end{align*}
The high-probability upper bound follows analogously. The matching lower bound for Gaussian data is proved in Proposition \ref{prop:lowerbound} below.
\end{proof}

Now we prove the matching lower bound for Gaussian data in Theorem \ref{thm:main1}.

\begin{proposition}\label{prop:lowerbound}
Let $X, X_1, \ldots, X_N$ be i.i.d. centered Gaussian random variables in $H$ with covariance operator $\Sigma$. Then, for any integer $p\ge 2$,
\begin{align*}
\E \bigg\|\frac{1}{N}\sum_{i=1}^{N} X_i^{\otimes p}-\E\, X^{\otimes p}\bigg\| \gtrsim_p \|\Sigma\|^{p/2} 
\bigg(\sqrt{\frac{r(\Sigma)}{N}}+\frac{r(\Sigma)^{p/2}}{N}\bigg).
\end{align*}
\end{proposition}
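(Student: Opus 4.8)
The plan is to prove the lower bound by separately establishing each of the two terms, $\|\Sigma\|^{p/2}\sqrt{r(\Sigma)/N}$ and $\|\Sigma\|^{p/2} r(\Sigma)^{p/2}/N$, and then combining them (up to a constant depending on $p$) by noting that $\max(a,b) \asymp a+b$. Throughout I would use the single-variable representation \eqref{eq:compare_aux0}, so that the quantity of interest is $\E \sup_{\|v\|=1} |Z_N(v)|$ where $Z_N(v) := \frac{1}{N}\sum_{i=1}^N \langle X_i,v\rangle^p - \E\langle X,v\rangle^p$.

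\textbf{First term (the $\sqrt{r(\Sigma)/N}$ rate).} This is the "CLT-type" fluctuation. Fix the eigenvector $v_1$ of $\Sigma$ corresponding to the top eigenvalue $\|\Sigma\|$, so that $\langle X_i, v_1\rangle \sim \mathcal{N}(0,\|\Sigma\|)$ are i.i.d. Then $\sup_{\|v\|=1}|Z_N(v)| \ge |Z_N(v_1)| = \big|\frac{1}{N}\sum_{i=1}^N g_i^p - \E g^p\big|$ with $g_i$ i.i.d.\ $\mathcal{N}(0,\|\Sigma\|)$. The summands $g_i^p - \E g_i^p$ are i.i.d., centered, with variance $\asymp_p \|\Sigma\|^p$, so by the standard lower bound on $\E|\text{sum of i.i.d.\ centered variables}|$ (e.g.\ via the Paley--Zygmund inequality or the Khintchine-type two-sided bound, since these have finite fourth moment), $\E|Z_N(v_1)| \gtrsim_p \|\Sigma\|^{p/2}/\sqrt{N}$. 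This gives the term $\|\Sigma\|^{p/2}\sqrt{1/N}$; but since $r(\Sigma) \ge 1$ always, to get the full $\sqrt{r(\Sigma)/N}$ rate I would instead mimic the argument of \cite[Theorem 4]{koltchinskii2017concentration}: test against a random direction built from the Gaussian vector itself, or more precisely use that $\sup_{\|v\|=1} Z_N(v) \ge Z_N(v)$ for a data-dependent $v$ — a cleaner route for Gaussians is to lower bound by a one-dimensional marginal only for the $r(\Sigma)^{p/2}/N$ term and handle the $\sqrt{r(\Sigma)/N}$ term by the following observation, detailed next.

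\textbf{Second term (the $r(\Sigma)^{p/2}/N$ rate) and the full first term.} For the $r(\Sigma)^{p/2}/N$ contribution, consider the event that controls the largest summand: with $v$ chosen (a posteriori) aligned with $X_1$, namely $v = X_1/\|X_1\|$, we get $\langle X_1, v\rangle^p = \|X_1\|^p$, and hence $\sup_{\|v\|=1} Z_N(v) \ge \frac{1}{N}\|X_1\|^p + \frac{1}{N}\sum_{i=2}^N \langle X_i, v\rangle^p - \E\langle X,v\rangle^p$, but $v$ now depends on $X_1$ and the remaining sum is hard to control directly. The standard fix (again following Koltchinskii--Lounici) is a symmetrization/de-symmetrization argument: one shows $\E\sup_v |Z_N(v)| \gtrsim \frac{1}{N}\E\max_{1\le i\le N}(\langle X_i, v_i\rangle^p$ for an optimally-aligned $v_i$), and since $\|X_1\| \asymp \sqrt{\mathrm{Tr}(\Sigma)} = \sqrt{\|\Sigma\| r(\Sigma)}$ with high probability (Gaussian concentration of the norm of a Hilbert-space-valued Gaussian), this produces $\frac{1}{N}(\|\Sigma\| r(\Sigma))^{p/2} = \|\Sigma\|^{p/2} r(\Sigma)^{p/2}/N$. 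For the first term, $\|\Sigma\|^{p/2}\sqrt{r(\Sigma)/N}$, one can take $v$ of the form $v = \cos\alpha\, v_1 + \sin\alpha\, w$ where $w$ is a normalized sum of the next $r(\Sigma)$ eigendirections weighted by independent standard Gaussians, expand $\langle X_i, v\rangle^p$, and extract the leading fluctuation of order $\|\Sigma\|^{p/2}\sqrt{r(\Sigma)}/\sqrt{N}$ from the cross term; alternatively, and more economically, one invokes the already-proven $p=2$ lower bound of \cite[Theorem 4]{koltchinskii2017concentration} after reducing the $p$-th order problem to it via a one-dimensional conditioning/contraction. I expect the combination of these two directions (and verifying that the optimal $v$ for one term does not destroy the bound — which is why we only need $\max$, i.e.\ take the better of the two test directions) to go through.

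\textbf{Main obstacle.} The genuinely delicate step is the $r(\Sigma)^{p/2}/N$ lower bound: a single rank-one tensor $X_1^{\otimes p}/N$ has operator norm exactly $\|X_1\|^p/N \asymp \|\Sigma\|^{p/2} r(\Sigma)^{p/2}/N$, but one must argue this "spike" is not cancelled by $\E\,X^{\otimes p}$ or by the other $N-1$ terms when we pass to the supremum over $v$; this requires a careful conditioning argument (condition on $X_1$, note $\E\langle X,v\rangle^p$ is bounded by $\|\Sigma\|^{p/2}$ uniformly and the empirical average over $i\ge 2$ concentrates, so for $r(\Sigma)$ not too small the $\|X_1\|^p/N$ term dominates). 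One also has to be slightly careful that $p$ is an even versus odd integer — for odd $p$ the quantity $\langle X_i, v\rangle^p$ can be negative, but choosing $v = X_1/\|X_1\|$ makes the $i=1$ term positive and as large as possible, so the argument is unaffected. The remaining steps — Gaussian norm concentration, lower bounds on $L_1$-norms of sums of i.i.d.\ finite-variance variables, and the majorizing-measure identification $\gamma(U_H,d_Y)\asymp \mathrm{Tr}(\Sigma)^{1/2}$ already used in the proof of Theorem~\ref{thm:main1} — are standard.
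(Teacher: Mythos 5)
Your high-level strategy (prove the two terms separately, use the top eigendirection for the $N^{-1/2}$ fluctuation and the spike $\|X_1\|^p/N$ for the $r(\Sigma)^{p/2}/N$ term) matches the paper's, and you correctly flag the two obstacles. But the harder half, the bound $\gtrsim_p\|\Sigma\|^{p/2}\sqrt{r(\Sigma)/N}$, is not actually proved: you offer two alternatives and carry out neither. The suggestion to ``reduce to the $p=2$ case via a one-dimensional conditioning/contraction'' does not work as stated — there is no contraction principle transferring a \emph{lower} bound on the supremum of the quadratic process to the $p$-th order process. Your other suggestion (perturb $v_1$ and extract the cross term) is morally what the paper does, but everything of substance is in the steps you omit. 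The paper lower bounds \eqref{eq:opnormdeviation} by $\sup_{\|v\|=1}\E\big\|\frac1N\sum_i\langle X_i,v\rangle^{p-1}X_i-\E\langle X,v\rangle^{p-1}X\big\|$, decomposes $X_i=\langle X_i,v\rangle\frac{\Sigma v}{\langle\Sigma v,v\rangle}+X_i'$ with $X_i'$ independent of $\langle X_i,v\rangle$, uses that conditionally $\frac1N\sum_i\langle X_i,v\rangle^{p-1}X_i'$ has the law of $\big(\frac1N\sum_i\langle X_i,v\rangle^{2(p-1)}\big)^{1/2}X'/\sqrt N$, applies Jensen conditionally to pull out $\E\|X'\|\ge\E\|X\|-\sqrt{2/\pi}\,\|\Sigma v\|/\langle\Sigma v,v\rangle^{1/2}\gtrsim\sqrt{\mathrm{Tr}(\Sigma)}$, and needs the uniform-in-$N$ estimate $\E\big(\frac1N\sum_iZ_i^{2(p-1)}\big)^{1/2}\ge c'(p)$ for i.i.d.\ standard Gaussians. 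None of this is in your proposal, and it is where the argument lives.

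For the $r(\Sigma)^{p/2}/N$ term, the ``symmetrization/de-symmetrization'' fix you invoke is a red herring. The clean resolution of the dependence of $v=X_1/\|X_1\|$ on the data is to apply Jensen \emph{inside} the supremum, conditionally on $X_1$:
\begin{equation*}
\E\sup_{\|v\|=1}|Z_N(v)|\;\ge\;\E_{X_1}\sup_{\|v\|=1}\Big|\E_{\{X_i\}_{i\ge2}}Z_N(v)\Big|\;=\;\frac1N\,\E_{X_1}\sup_{\|v\|=1}\big|\langle X_1,v\rangle^p-\E\langle X,v\rangle^p\big|,
\end{equation*}
which eliminates the $i\ge2$ terms before the supremum is taken; your alternative (show the empirical average over $i\ge2$ concentrates for the data-dependent direction) can be salvaged but is strictly harder than necessary. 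Finally, both halves degenerate when $r(\Sigma)$ is bounded (the quantities $r(\Sigma)^{p/2}-C(p)$ and $c'\sqrt{r(\Sigma)}-\sqrt{2/\pi}$ can be negative); the paper repairs this by taking a convex combination with the trivial one-dimensional bounds $\gtrsim_p\|\Sigma\|^{p/2}/\sqrt N\ge\|\Sigma\|^{p/2}/N$, a step your write-up leaves implicit.
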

\begin{proof}[Proof of Proposition \ref{prop:lowerbound}]

The proof builds upon ideas from \cite[Theorem 4]{koltchinskii2017concentration} and \cite[Lemma 5.6]{van2017structured}. Throughout the proof, $c(p)$, $c'(p)$ and $C(p)$ denote positive constants depending only on $p$. Recall that
\begin{align}\label{eq:lowerbound_aux0}
 \E \bigg\|\frac{1}{N}\sum_{i=1}^{N} X_i^{\otimes p}-\E\, X^{\otimes p}\bigg\|=\E \sup _{\|v\|=1}\bigg|\frac{1}{N} \sum_{i=1}^N\left\langle X_i, v\right\rangle^p-\E\langle X, v\rangle^p\bigg|.
\end{align}

We first prove that, for any $p\ge 2$,
\begin{align}\label{eq:lowerbound_aux1}
\E \sup_{\|v\|=1}\bigg|\frac{1}{N} \sum_{i=1}^N\left\langle X_i, v\right\rangle^p-\E\langle X, v\rangle^p\bigg| \gtrsim_p \|\Sigma\|^{p/2} \frac{r(\Sigma)^{p/2}}{N}.
\end{align}
Observe that
\begin{align}\label{eq:lowerbound_aux10}
   & \E \sup _{\|v\|=1}\bigg|\frac{1}{N} \sum_{i=1}^N\left\langle X_i, v\right\rangle^p-\E\langle X, v\rangle^p\bigg| \ge \E_{X_1} \sup _{\|v\|=1}\bigg|\E_{\{X_i\}_{i\ge 2}}\frac{1}{N} \sum_{i=1}^N\left(\left\langle X_i, v\right\rangle^p-\E\langle X, v\rangle^p\right)\bigg|\nonumber\\
    &= \frac{1}{N}\E_{X_1}\sup_{\|v\|=1} \bigg|\left\langle X_1, v\right\rangle^p-\E\langle X, v\rangle^p\bigg|\ge \frac{1}{N}\bigg(\E_{X_1}\Big[\,\sup_{\|v\|=1}\left|\langle X_1,v\rangle^p\right|\Big]-\sup_{\|v\|=1} \left|\E \langle X,v\rangle^p\right|\bigg) \nonumber\\
    &\overset{\text{($\star$)}}{\ge} \frac{1}{N} \left(\E \|X_1\|^{p}-C(p)\|\Sigma\|^{p/2}\right)\overset{\text{($\star\star$)}}
    \ge \frac{1}{N}\left( \mathrm{Tr}(\Sigma)^{p/2}- C(p)\|\Sigma\|^{p/2}\right)
    =\frac{1}{N}\|\Sigma\|^{p/2}\left(r(\Sigma)^{p/2}-C(p)\right).
    \end{align}
    Here, ($\star$) follows by $\E_{X_1}\left[ \sup_{\|v\|=1} |\langle X_1,v\rangle^p|\right]=\E \|X_1\|^p$ and
   $ \sup_{\|v\|=1} |\E \langle X,v\rangle^p|\le C(p) \|\Sigma\|^{p/2}$, since $\langle X,v \rangle\overset{\mathsf{d}}{\sim} \mathcal{N}(0,\langle \Sigma v,v\rangle)$, while ($\star\star$) follows by Jensen's inequality $\E \|X_1\|^p \ge \left(\E \|X_1\|^2\right)^{p/2}=\mathrm{Tr}(\Sigma)^{p/2}$ for $p\ge 2$. 
   
   We define the i.i.d. standard Gaussian random variables
\[
Z_i:=\frac{\langle X_i,v \rangle}{\langle \Sigma v ,v\rangle^{1/2}},\quad i=1,\ldots,N.
\] 
It follows that
\begin{align}\label{eq:lowerbound_aux3}
 \E \sup _{\|v\|=1}\bigg|\frac{1}{N} \sum_{i=1}^N\left\langle X_i, v\right\rangle^p-\E\langle X, v\rangle^p\bigg|
 &\ge \sup_{\|v\|=1}\E \bigg|\frac{1}{N}\sum_{i=1}^N \langle X_i,v \rangle^{p} -\E \langle X, v\rangle^{p} \bigg| \nonumber\\
 &=\sup_{\|v\|=1}\langle \Sigma v,v \rangle^{p/2} \E \bigg|\frac{1}{N}\sum_{i=1}^N Z_i^p-\E Z^p\bigg| \nonumber\\
 &\ge \|\Sigma\|^{p/2} \frac{c(p)}{\sqrt{N}}
 \ge \|\Sigma\|^{p/2}\frac{c(p)}{N}.
\end{align}
Therefore, combining \eqref{eq:lowerbound_aux10} and \eqref{eq:lowerbound_aux3} gives that, for any $t\in [0,1]$,
\begin{align*}
 \E \sup _{\|v\|=1}\bigg|\frac{1}{N} \sum_{i=1}^N\left\langle X_i, v\right\rangle^p-\E\langle X, v\rangle^p\bigg|
\ge \|\Sigma\|^{p/2}\left(t\frac{1}{N}\left(r(\Sigma)^{p/2}-C(p)\right)+(1-t)\frac{c(p)}{N}\right).
\end{align*}
Choosing $t$ to be small enough so that $tC(p)\le (1-t)c(p)$, we get
\[
\E \sup _{\|v\|=1}\bigg|\frac{1}{N} \sum_{i=1}^N\left\langle X_i, v\right\rangle^p-\E\langle X, v\rangle^p\bigg| \ge t\|\Sigma\|^{p/2} \frac{r(\Sigma)^{p/2}}{N}\asymp_p \|\Sigma\|^{p/2}\frac{r(\Sigma)^{p/2}}{N},
\]
as desired.

Next, we show that for any $p\ge 2$,
    \begin{align}\label{eq:lowerbound_aux2}
    \E \sup _{\|v\|=1}\bigg|\frac{1}{N} \sum_{i=1}^N\left\langle X_i, v\right\rangle^p-\E\langle X, v\rangle^p\bigg| \gtrsim_p \|\Sigma\|^{p/2} \sqrt{\frac{r(\Sigma)}{N}}.
\end{align}
Observe that
\begin{align}\label{eq:lowerbound2_aux1}
    \E \sup _{\|v\|=1}\bigg|\frac{1}{N} \sum_{i=1}^N\left\langle X_i, v\right\rangle^p-\E\langle X, v\rangle^p\bigg| &= \E \bigg\|\frac{1}{N}\sum_{i=1}^{N} X_i^{\otimes p}-\E\, X^{\otimes p}\bigg\|\nonumber\\
    &=\E \sup_{\|v\|=1}\bigg\|\frac{1}{N}\sum_{i=1}^N \langle X_i,v \rangle^{p-1} X_i -\E \langle X, v\rangle^{p-1}X \bigg\| \nonumber\\
    &\ge \sup_{\|v\|=1}\E \bigg\|\frac{1}{N}\sum_{i=1}^N \langle X_i,v \rangle^{p-1} X_i -\E \langle X, v\rangle^{p-1}X \bigg\|.
\end{align}
For a fixed $v$ with $\|v\| = 1$ and $\langle\Sigma v, v\rangle>0$, denote
\[
X^{\prime}:=X-\langle X, v\rangle \frac{\Sigma v}{\langle\Sigma v, v\rangle}.
\]
By a straightforward computation, for all $u \in H$, the random variables $\langle X, v\rangle$ and $\left\langle X^{\prime}, u\right\rangle$ are uncorrelated. Since they are jointly Gaussian, it follows that $\langle X, v\rangle$ and $X^{\prime}$ are independent. Define
\[
X_i^{\prime}:=X_i-\left\langle X_i, v\right\rangle \frac{\Sigma v}{\langle\Sigma v, v\rangle}, \quad i=1, \ldots, N.
\]
Then $\left\{X_i^{\prime}: i=1, \ldots, N\right\}$ and $\left\{\left\langle X_i, u\right\rangle: i=1, \ldots, N\right\}$ are also independent. We easily get
\begin{align}\label{eq:lowerbound2_aux2}
& \mathbb{E}\bigg\|\frac{1}{N} \sum_{i=1}^N\left\langle X_i, v\right\rangle^{p-1} X_i-\mathbb{E}\langle X, v\rangle^{p-1} X\bigg\| \nonumber\\
& \quad=\mathbb{E}\bigg\|\frac{1}{N} \sum_{i=1}^N\left(\left\langle X_i, v\right\rangle^p-\mathbb{E}\langle X, v\rangle^p\right) \frac{\Sigma v}{\langle\Sigma v, v\rangle}+\frac{1}{N} \sum_{i=1}^N\left\langle X_i, v\right\rangle^{p-1} X_i^{\prime}\bigg\|,
\end{align}
where we used the fact that
\[
\mathbb{E}\langle X, v\rangle^{p-1} X=\mathbb{E}\langle X, v\rangle^p \frac{\Sigma v}{\langle\Sigma v, v\rangle}+\mathbb{E}\langle X, v\rangle^{p-1} \mathbb{E} X^{\prime}=\mathbb{E}\langle X, v\rangle^p \frac{\Sigma v}{\langle\Sigma v, v\rangle}.
\]

Note that, conditionally on $\left\langle X_i, v\right\rangle, i=1, \ldots, N$, the distribution of the random variable
\[
\frac{1}{N} \sum_{i=1}^{N}\left\langle X_i, v\right\rangle^{p-1} X_i^{\prime}
\]
is Gaussian and it coincides with the distribution of the random variable
\[
\bigg(\frac{1}{N} \sum_{i=1}^{N}\left\langle X_i, v\right\rangle^{2(p-1)}\bigg)^{1 / 2} \frac{X^{\prime}}{\sqrt{N}}.
\]
Denote now by $\mathbb{E}_v$ the conditional expectation given $\left\langle X_i, v\right\rangle, i=1, \ldots, N$ and by $\mathbb{E}^{\prime}$ the conditional expectation given $X_1^{\prime}, \ldots, X_N^{\prime}$. Then, we have
\begin{align}\label{eq:lowerbound2_aux3}
& \mathbb{E}\bigg\|\frac{1}{N} \sum_{i=1}^{N}\left(\left\langle X_i, v\right\rangle^{p}-\mathbb{E}\langle X, v\rangle^{p}\right) \frac{\Sigma v}{\langle\Sigma v, v\rangle}+\frac{1}{N} \sum_{i=1}^{N}\left\langle X_i, v\right\rangle^{p-1} X_i^{\prime}\bigg\| \nonumber\\
& \quad=\E \mathbb{E}_{v}\bigg\|\frac{1}{N} \sum_{i=1}^{N}\left(\left\langle X_i, v\right\rangle^{p}-\mathbb{E}\langle X, v\rangle^p\right) \frac{\Sigma v}{\langle\Sigma v, v\rangle}+\frac{1}{N} \sum_{i=1}^{N}\left\langle X_i, v\right\rangle^{p-1} X_i^{\prime}\bigg\| \nonumber \\
& \quad=\mathbb{E} \mathbb{E}_{v}\bigg\|\frac{1}{N} \sum_{i=1}^{N}\left(\left\langle X_i, v\right\rangle^{p}-\mathbb{E}\langle X, v\rangle^p\right) \frac{\Sigma v}{\langle\Sigma v, v\rangle}+\bigg(\frac{1}{N} \sum_{i=1}^{N}\left\langle X_i, v\right\rangle^{2(p-1)}\bigg)^{1 / 2} \frac{X^{\prime}}{\sqrt{N}}\bigg\| \nonumber\\
& \quad=\mathbb{E}\bigg\|\frac{1}{N} \sum_{i=1}^{N}\left(\left\langle X_i, v\right\rangle^{p}-\mathbb{E}\langle X, v\rangle^p\right) \frac{\Sigma v}{\langle\Sigma v, v\rangle}+\bigg(\frac{1}{N} \sum_{i=1}^{N}\left\langle X_i, v\right\rangle^{2(p-1)}\bigg)^{1 / 2} \frac{X^{\prime}}{\sqrt{N}}\bigg\|.
\end{align}
Also
\begin{align*}
    &\mathbb{E}\bigg\|\frac{1}{N} \sum_{i=1}^{N}\left(\left\langle X_i, v\right\rangle^{p}-\mathbb{E}\langle X, v\rangle^p\right) \frac{\Sigma v}{\langle\Sigma v, v\rangle}+\bigg(\frac{1}{N} \sum_{i=1}^{N}\left\langle X_i, v\right\rangle^{2(p-1)}\bigg)^{1 / 2} \frac{X^{\prime}}{\sqrt{N}}\bigg\|\\
    &\quad= \E \E^{\prime}\bigg\|\frac{1}{N} \sum_{i=1}^{N}\left(\left\langle X_i, v\right\rangle^{p}-\mathbb{E}\langle X, v\rangle^p\right) \frac{\Sigma v}{\langle\Sigma v, v\rangle}+\bigg(\frac{1}{N} \sum_{i=1}^{N}\left\langle X_i, v\right\rangle^{2(p-1)}\bigg)^{1 / 2} \frac{X^{\prime}}{\sqrt{N}}\bigg\|\\
    &\quad\ge \E \bigg\|\E^{\prime}\frac{1}{N} \sum_{i=1}^{N}\left(\left\langle X_i, v\right\rangle^{p}-\mathbb{E}\langle X, v\rangle^p\right) \frac{\Sigma v}{\langle\Sigma v, v\rangle}+\E^{\prime}\bigg(\frac{1}{N} \sum_{i=1}^{N}\left\langle X_i, v\right\rangle^{2(p-1)}\bigg)^{1 / 2} \frac{X^{\prime}}{\sqrt{N}}\bigg\|\\
    &\quad = \E \bigg(\frac{1}{N} \sum_{i=1}^{N}\left\langle X_i, v\right\rangle^{2(p-1)}\bigg)^{1 / 2} \frac{\E\|X^{\prime}\|}{\sqrt{N}}.
\end{align*}
Note that
\[
\mathbb{E}|\langle X, v\rangle|=\sqrt{\frac{2}{\pi}}\langle\Sigma v, v\rangle^{1 / 2}.
\]
Therefore,
\[
\E\|X^{\prime}\| \geq \mathbb{E}\|X\|-\mathbb{E}|\langle X, v\rangle| \frac{\|\Sigma v\|}{\langle\Sigma v, v\rangle}=\mathbb{E}\|X\|-\sqrt{\frac{2}{\pi}} \frac{\|\Sigma v\|}{\langle\Sigma v, v\rangle^{1 / 2}}
\]
and
\begin{align*}
&\mathbb{E}\bigg\|\frac{1}{N} \sum_{i=1}^{N}\left(\left\langle X_i, v\right\rangle^{p}-\mathbb{E}\langle X, v\rangle^p\right) \frac{\Sigma v}{\langle\Sigma v, v\rangle}+\bigg(\frac{1}{N} \sum_{i=1}^{N}\left\langle X_i, v\right\rangle^{2(p-1)}\bigg)^{1 / 2} \frac{X^{\prime}}{\sqrt{N}}\bigg\|\\
&\quad \ge \E \bigg(\frac{1}{N} \sum_{i=1}^{N}\left\langle X_i, v\right\rangle^{2(p-1)}\bigg)^{1 / 2} \frac{\mathbb{E}\|X\|-\sqrt{2/\pi} \|\Sigma v\|/\langle\Sigma v, v\rangle^{1 / 2}}{\sqrt{N}}\\
&\quad =\langle \Sigma v ,v\rangle^{(p-1)/2} \E \bigg(\frac{1}{N} \sum_{i=1}^{N}Z_i^{2(p-1)}\bigg)^{1 / 2} \frac{\mathbb{E}\|X\|-\sqrt{2/\pi} \|\Sigma v\|/\langle\Sigma v, v\rangle^{1 / 2}}{\sqrt{N}},
\end{align*}
where we recall that
\[
Z_i=\frac{\langle X_i,v \rangle}{\langle \Sigma v ,v\rangle^{1/2}},\quad i=1,\ldots,N
\]
are i.i.d. standard Gaussian random variables. One can check that
\[
\E \bigg(\frac{1}{N} \sum_{i=1}^{N}Z_i^{2(p-1)}\bigg)^{1 / 2} \ge c^{\prime}(p)
\]
for a positive constant $c^{\prime}(p)$ depending only on $p$, which implies that
\begin{align*}
&\mathbb{E}\bigg\|\frac{1}{N} \sum_{i=1}^{N}\left(\left\langle X_i, v\right\rangle^{p}-\mathbb{E}\langle X, v\rangle^p\right) \frac{\Sigma v}{\langle\Sigma v, v\rangle}+\bigg(\frac{1}{N} \sum_{i=1}^{N}\left\langle X_i, v\right\rangle^{2(p-1)}\bigg)^{1 / 2} \frac{X^{\prime}}{\sqrt{N}}\bigg\|\\
&\quad \ge c^{\prime}(p)\langle\Sigma v ,v\rangle^{(p-1)/2} \frac{\mathbb{E}\|X\|-\sqrt{2/\pi} \|\Sigma v\|/\langle\Sigma v, v\rangle^{1 / 2}}{\sqrt{N}}\\
&\quad =\frac{c^{\prime}(p)}{\sqrt{N}}\bigg(\langle\Sigma v ,v\rangle^{(p-1)/2}\E \|X\|-\sqrt{\frac{2}{\pi}}\|\Sigma v\|\langle \Sigma v, v\rangle^{(p-2)/2}\bigg).
\end{align*}

We combine this bound with \eqref{eq:lowerbound2_aux1}, \eqref{eq:lowerbound2_aux2} and \eqref{eq:lowerbound2_aux3} to get
\begin{align}\label{eq:lowerbound_aux7}
 \E \sup _{\|v\|=1}\bigg|\frac{1}{N} \sum_{i=1}^N\left\langle X_i, v\right\rangle^p-\E\langle X, v\rangle^p\bigg| &\ge \frac{c^{\prime}(p)}{\sqrt{N}}\sup_{\|v\|=1}\bigg(\langle\Sigma v ,v\rangle^{(p-1)/2}\E \|X\|-\sqrt{\frac{2}{\pi}}\|\Sigma v\|\langle \Sigma v, v\rangle^{(p-2)/2}\bigg)\nonumber\\
 &\overset{\text{($\star$)}}{\ge} \frac{c^{\prime}(p)}{\sqrt{N}} \bigg(\|\Sigma\|^{(p-1)/2}\E \|X\|-\sqrt{\frac{2}{\pi}}\|\Sigma\|^{p/2}\bigg)\nonumber\\
 &\overset{\text{($\star\star$)}}{\ge} c^{\prime}(p)\|\Sigma\|^{p/2}\bigg(\frac{c^{\prime}\sqrt{r(\Sigma)}-\sqrt{2/\pi}}{\sqrt{N}}\bigg),
\end{align}
where ($\star$) follows by taking the eigenvector of $\Sigma$ corresponding to the largest eigenvalue, and ($\star\star$) follows since for a Gaussian vector $X$, $\E \|X\|\asymp \sqrt{\E \|X\|^2}=\sqrt{\mathrm{Tr}(\Sigma)}$.

Recall that we have the following bound by \eqref{eq:lowerbound_aux3}
\begin{align}\label{eq:lowerbound_aux8}
  \E \sup _{\|v\|=1}\bigg|\frac{1}{N} \sum_{i=1}^N\left\langle X_i, v\right\rangle^p-\E\langle X, v\rangle^p\bigg| \ge \|\Sigma\|^{p/2} \frac{c(p)}{\sqrt{N}}.
\end{align}
Combining \eqref{eq:lowerbound_aux7} and \eqref{eq:lowerbound_aux8} gives that,  for any $t\in [0,1]$,
\begin{align*}
  \E \sup _{\|v\|=1}\bigg|\frac{1}{N} \sum_{i=1}^N\left\langle X_i, v\right\rangle^p-\E\langle X, v\rangle^p\bigg| 
\ge \|\Sigma\|^{p/2} \bigg(tc^{\prime}(p)\bigg(\frac{c^{\prime}\sqrt{r(\Sigma)}-\sqrt{2/\pi}}{\sqrt{N}}\bigg)+(1-t)\frac{c(p)}{\sqrt{N}}\bigg).
\end{align*}
Choosing $t$ to be small enough so that $tc^{\prime}(p)\sqrt{2/\pi}\le (1-t)c(p)$, we get
\[
 \E \sup _{\|v\|=1}\bigg|\frac{1}{N} \sum_{i=1}^N\left\langle X_i, v\right\rangle^p-\E\langle X, v\rangle^p\bigg|\ge t c^{\prime}(p)c^{\prime}\|\Sigma\|^{p/2} \sqrt{\frac{r(\Sigma)}{N}}\asymp_p \|\Sigma\|^{p/2} \sqrt{\frac{r(\Sigma)}{N}}.
\]
This completes the proof of \eqref{eq:lowerbound_aux2}. Combining \eqref{eq:lowerbound_aux0}, \eqref{eq:lowerbound_aux1} and \eqref{eq:lowerbound_aux2} completes the proof.
\end{proof}

\section{$L_p$ empirical processes }\label{sec:multi}

This section studies $L_p$ empirical processes
\begin{align}\label{eq:higher-order-processes}
f \mapsto 
    \frac{1}{N} \sum_{i=1}^N f^p(X_i) 
    - \E f^p(X),
    \qquad f \in \mcF.
\end{align}
We leverage generic chaining techniques to derive sharp high-probability upper bounds on their suprema in terms of quantities that reflect the complexity of the function class $\mcF$.

Our theory builds on the framework introduced by Mendelson in \cite{mendelson2016upper}. The central idea is as follows. First, we gather precise structural information on the typical coordinate projection of $\F$; that is, for $\sigma=(X_1,\ldots,X_N)$, we study the structure of
\[
P_{\sigma}(\F)=\left\{(f(X_i))_{i=1}^{N}: f\in \F\right\}.
\]
Second, for a typical $\sigma=(X_1,\ldots,X_N)$, we analyze the suprema of the conditioned Bernoulli processes:
\[
f \mapsto \sum_{i=1}^N \varepsilon_i f^p(X_i)  \  \Big | \sigma,
\]
where $\varepsilon_1,\ldots,\varepsilon_N$ are independent symmetric Bernoulli random variables. 

The structure of typical coordinate projections of sub-Gaussian function classes has been extensively studied in empirical process theory \cite{mendelson2010empirical, mendelson2011discrepancy,mendelson2012generic,mendelson2016dvoretzky,mendelson2016upper}, primarily in the context of quadratic and product empirical processes. Here, we extend the analysis to empirical processes of arbitrary higher order. A key component of our approach is Theorem \ref{thm:l_m_norm}, which establishes a sharp uniform bound on the $\ell_{m}$-norm of the coordinate projection vector of the function class $\F$ for any $m\ge 1$. Specifically, it controls
\[
\sup_{f\in \F} \bigg(\sum_{i=1}^{N}|f|^{m}\left(X_i\right)\bigg)^{1/m}
\]
in terms of quantities that capture the geometric complexity of $\F$.

The rest of this section is organized as follows. Subsection \ref{subsec:background} introduces some basic definitions from Mendelson's work \cite{mendelson2016upper} and discusses an important result, Lemma \ref{lemma:mendelson} (\cite[Corollary 3.6]{mendelson2016upper}), which plays a crucial role in our proof of Theorem \ref{thm:main2}. We then prove Theorem \ref{thm:main2} in Subsection \ref{subsec:main2} and prove Theorem \ref{thm:l_m_norm} in Subsection \ref{subsec:l_m_norm}.

\subsection{Background and auxiliary results}\label{subsec:background}
Following \cite{mendelson2016upper}, we will utilize complexity parameters that take into account all the $L_p$ structures endowed by the process. For $q \geq 1$, we recall that the graded $L_q$ norm is defined by
\[
\|f\|_{(q)} :=\sup _{1 \leq p \leq q} \frac{\|f\|_{L_p}}{\sqrt{p}}.
\]
We will use repeatedly that, for any $q \ge 2,$ $\|f \|_{L_2} \lesssim \|f\|_{(q)} \lesssim \|f\|_{\psi_2}.$
As shown in \cite{mendelson2016upper}, the graded $\gamma$-type functionals that we now define serve as key parameters to characterize the complexity of the typical coordinate projection of $\F$.

\begin{definition}[Graded $\gamma$-type functionals]
Given a class of functions $\F$ on a probability space $(\Omega, \mu)$, constants $u \geq 1$ and $s_0 \geq 0$, set
\[
\Lambda_{s_0, u}(\F) :=\inf \sup _{f \in \F} \sum_{s \geq s_0} 2^{s / 2}\left\|f-\pi_s f\right\|_{(u^2 2^s)},
\]
where the infimum is taken over all admissible sequences, and $\pi_s f$ is the nearest point in $\F_s$ to $f$ with respect to the $(u^2 2^s)$-norm. Also set
\[
\widetilde{\Lambda}_{s_0, u}(\F) :=\Lambda_{s_0, u}(\F)+2^{s_0 / 2} \sup_{f \in \F}\left\|\pi_{s_0} f\right\|_{(u^2 2^{s_0})} .
\]
\end{definition} 

 The following lemma shows that $\Lambda_{s_0, u}(\F)$ is upper bounded by $\gamma(\F,\psi_2)$ up to an absolute constant.
\begin{lemma}\label{lemma:Lambda_Gamma}
    For any $u\ge 1$ and $s_0\ge 0$, $\Lambda_{s_0,u}(\F)\lesssim \gamma(\F,\psi_2)$.
\end{lemma}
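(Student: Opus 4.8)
The plan is to compare the two functionals term by term along a fixed admissible sequence. Fix $\eps > 0$ and choose an admissible sequence $(\F_s)_{s \ge 0}$ that is nearly optimal for $\gamma(\F, \psi_2)$, i.e.
\[
\sup_{f \in \F} \sum_{s \ge 0} 2^{s/2}\, d_{\psi_2}(f, \F_s) \le (1+\eps)\,\gamma(\F, \psi_2).
\]
For each $s$, let $\pi_s^{\psi_2} f$ denote a nearest point to $f$ in $\F_s$ in the $\psi_2$-metric, and let $\pi_s f$ denote a nearest point in $\F_s$ in the $(u^2 2^s)$-norm (the point used in the definition of $\Lambda_{s_0,u}$). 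By minimality of $\pi_s f$ in the $(u^2 2^s)$-norm,
\[
\|f - \pi_s f\|_{(u^2 2^s)} \le \|f - \pi_s^{\psi_2} f\|_{(u^2 2^s)}.
\]
The first key step is then the pointwise norm comparison recalled right before the lemma: for any $q \ge 2$ and any function $g$, $\|g\|_{(q)} \lesssim \|g\|_{\psi_2}$, with an absolute constant. Since $u \ge 1$ forces $u^2 2^s \ge 1$, and since $\|g\|_{(q)}$ is nondecreasing in $q$ so we may assume $q = u^2 2^s \ge 2$ (for small $s$ one bounds $\|g\|_{(q)} \le \|g\|_{(2)} \lesssim \|g\|_{L_2} \lesssim \|g\|_{\psi_2}$ anyway), we get
\[
\|f - \pi_s^{\psi_2} f\|_{(u^2 2^s)} \lesssim \|f - \pi_s^{\psi_2} f\|_{\psi_2} = d_{\psi_2}(f, \F_s).
\]

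Combining the two displays and summing over $s \ge s_0$ (in particular over $s \ge 0$, which only makes the sum larger), we obtain, for every $f \in \F$,
\[
\sum_{s \ge s_0} 2^{s/2} \|f - \pi_s f\|_{(u^2 2^s)} \lesssim \sum_{s \ge 0} 2^{s/2}\, d_{\psi_2}(f, \F_s) \le (1+\eps)\,\gamma(\F, \psi_2).
\]
Taking the supremum over $f \in \F$ on the left and then the infimum over admissible sequences (the inequality holds in particular for the near-optimal sequence chosen above, hence the infimum defining $\Lambda_{s_0,u}$ is bounded by this), and finally letting $\eps \to 0$, yields $\Lambda_{s_0,u}(\F) \lesssim \gamma(\F, \psi_2)$ with an absolute constant independent of $u$ and $s_0$.

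**Main obstacle.** There is no real obstacle here — the statement is essentially a bookkeeping consequence of the uniform domination $\|\cdot\|_{(q)} \lesssim \|\cdot\|_{\psi_2}$ over all $q \ge 1$, which is exactly what makes the graded norm weaker than the $\psi_2$-norm. The only mild subtlety is that the nearest-point maps $\pi_s$ in the definition of $\Lambda_{s_0,u}$ are taken with respect to the varying $(u^2 2^s)$-norm rather than the $\psi_2$-metric; this is handled by the minimality argument above, which lets us replace $\pi_s f$ by the $\psi_2$-nearest point $\pi_s^{\psi_2} f$ at no cost. One should also note that the implicit constant genuinely does not depend on $u$: the bound $\|g\|_{(q)} \lesssim \|g\|_{\psi_2}$ holds with a single absolute constant valid for all $q$, and $2^{s_0/2}$-type boundary terms do not appear in $\Lambda_{s_0,u}$ itself (they enter only $\widetilde{\Lambda}_{s_0,u}$), so the estimate is clean.
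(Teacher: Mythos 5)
Your proof is correct and follows essentially the same route as the paper's: replace the $(u^2 2^s)$-nearest point by the $\psi_2$-nearest point using minimality, apply the uniform domination $\|\cdot\|_{(q)}\lesssim\|\cdot\|_{\psi_2}$, and extend the sum from $s\ge s_0$ to $s\ge 0$. The only cosmetic difference is that you work with a near-optimal sequence and let $\eps\to 0$, whereas the paper bounds the sum for an arbitrary admissible sequence and then takes the infimum directly; these are equivalent.
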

\begin{proof}
For any admissible sequence $(\F_s)_{s\ge 0} \subset \F$, any $f\in \F$ and $s\ge 0$, let $\bar{\pi}_s f$ be the nearest point in $\F_s$ to $f$ with respect to the $\psi_2$ norm. By the definition of $\pi_s f$, we have
\[
\sum_{s \geq s_0} 2^{s / 2}\left\|f-\pi_s f\right\|_{(u^2 2^s)}\le \sum_{s \geq s_0} 2^{s / 2}\left\|f-\bar{\pi}_s f\right\|_{(u^2 2^s)}\lesssim \sum_{s \geq s_0} 2^{s / 2}\left\|f-\bar{\pi}_s f\right\|_{\psi_2}.
\]
Thus,
\[
\Lambda_{s_0, u}(\F)
\lesssim \inf \sup _{f \in \F} \sum_{s \geq s_0} 2^{s / 2} \left\|f-\bar{\pi}_s f\right\|_{\psi_2} 
\le \inf \sup _{f \in \F} \sum_{s \geq 0} 2^{s / 2} \left\|f-\bar{\pi}_s f\right\|_{\psi_2} = \gamma(\F,\psi_2). \qedhere
\]
\end{proof}

To analyze (conditioned) Bernoulli processes, we will use a chaining argument combined with the following H\"offding’s inequality.

\begin{lemma}[H\"offding's Inequality, \cite{montgomery1990distribution}]\label{lemma:Hoffding} Let $\varepsilon_1,\ldots,\varepsilon_N$ be independent symmetric Bernoulli random variables (that is, $\P(\varepsilon_n=+1)=\P(\varepsilon_n=-1)=1/2$). For any fixed $I \subset\{1, \ldots, N\}$, $z \in \mathbb{R}^N$, it holds with probability at least $1-2 \exp (-t^2 / 2 )$ that
\[
\bigg|\sum_{i=1}^N \varepsilon_i z_i\bigg| \le \sum_{i \in I}\left|z_i\right|+t\bigg(\sum_{i \in I^c} z_i^2\bigg)^{1 / 2}.
\]
In particular, for every $1 \le k \le N,$ it holds with probability at least $1-2 \exp (-t^2 / 2 )$ that
\[
\bigg|\sum_{i=1}^N \varepsilon_i z_i\bigg| 
\le \sum_{i=1}^k \left|z_i^*\right|+t\bigg(\sum_{i=k+1}^N (z_i^*)^2\bigg)^{1 / 2},
\]
where $\left(z_i^*\right)_{i=1}^N$ denotes the non-increasing rearrangement of $\left(\left|z_i\right|\right)_{i=1}^N.$ This estimate is optimal when $k\asymp t^2$.
\end{lemma}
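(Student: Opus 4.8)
The plan is to split the index set into the ``heavy'' part $I$, treated deterministically by the triangle inequality, and the complementary ``light'' part $I^c$, treated by a scalar sub-Gaussian (Chernoff) tail bound for Rademacher sums.

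First I would write $\sum_{i=1}^N \varepsilon_i z_i = \sum_{i\in I}\varepsilon_i z_i + \sum_{i\in I^c}\varepsilon_i z_i$. For the heavy part, the triangle inequality gives the deterministic bound $\big|\sum_{i\in I}\varepsilon_i z_i\big| \le \sum_{i\in I}|z_i|$, valid on the whole probability space. For the light part, $\sum_{i\in I^c}\varepsilon_i z_i$ is a sum of independent, symmetric, bounded terms, so the elementary inequality $\E\, e^{\lambda \varepsilon_i z_i} = \cosh(\lambda z_i) \le e^{\lambda^2 z_i^2/2}$ together with independence yields $\E \exp\!\big(\lambda \sum_{i\in I^c}\varepsilon_i z_i\big) \le \exp\!\big(\tfrac{\lambda^2}{2}\sum_{i\in I^c}z_i^2\big)$. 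A Chernoff bound with the optimal choice $\lambda = t\big(\sum_{i\in I^c}z_i^2\big)^{-1/2}$ gives $\P\big(\sum_{i\in I^c}\varepsilon_i z_i \ge t(\sum_{i\in I^c}z_i^2)^{1/2}\big) \le e^{-t^2/2}$, and the lower tail is identical by symmetry, so the bad event has probability at most $2e^{-t^2/2}$. On its complement, combining the deterministic estimate on the heavy part with $\big|\sum_{i=1}^N \varepsilon_i z_i\big| \le \big|\sum_{i\in I}\varepsilon_i z_i\big| + \big|\sum_{i\in I^c}\varepsilon_i z_i\big|$ yields the claimed inequality.

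For the ``in particular'' statement, I would specialize $I$ to be a set of $k$ indices realizing the $k$ largest values of $|z_i|$; then $\sum_{i\in I}|z_i| = \sum_{i=1}^k z_i^*$ and $\sum_{i\in I^c}z_i^2 = \sum_{i=k+1}^N (z_i^*)^2$, and the general bound reduces to the stated one for every $1\le k\le N$.

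There is no genuine obstacle in the upper bound; the only delicate point is the optimality claim that the estimate is sharp when $k\asymp t^2$. Establishing this requires a matching lower bound, namely a reverse concentration estimate showing $\sum_{i\in I^c}\varepsilon_i z_i \gtrsim t\big(\sum_{i\in I^c}z_i^2\big)^{1/2}$ with probability at least $e^{-Ct^2}$ whenever $t^2\lesssim|I^c|$ — obtained via a Paley–Zygmund (small-ball) argument or a Berry–Esseen comparison with the Gaussian — together with the observation that $\sum_{i\in I}\varepsilon_i z_i$ carries the favorable sign with constant probability, and that the two events can be intersected while retaining probability $\gtrsim e^{-Ct^2}$. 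The Gaussian approximation is accurate precisely in the regime where the number of retained coordinates matches $t^2$, which is the source of the stated scaling; since only the upper bound is used in the sequel, I would state the optimality as a remark and omit the details.
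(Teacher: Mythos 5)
Your proof is correct and is the standard derivation: the triangle inequality on $I$ plus the Chernoff bound $\E\,e^{\lambda\varepsilon_i z_i}=\cosh(\lambda z_i)\le e^{\lambda^2 z_i^2/2}$ on $I^c$ gives exactly the stated tail, and the ``in particular'' clause follows by taking $I$ to index the $k$ largest $|z_i|$. The paper does not prove this lemma at all --- it is cited from Montgomery-Smith \cite{montgomery1990distribution} --- and, like you, it asserts the optimality for $k\asymp t^2$ without proof; your decision to defer the matching lower bound (which is the genuinely nontrivial half of Montgomery-Smith's theorem) is therefore consistent with how the result is used in the paper, where only the upper bound enters the chaining argument.
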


According to Lemma \ref{lemma:Hoffding}, the influence of $z$ on the Bernoulli process depends on the $\ell_1$-norm of the largest $k$ coordinates of $(|z_i|)_{i=1}^{N}$ and the $\ell_2$-norm of the smallest $N-k$ coordinates. As noted in \cite{mendelson2016upper}, the graded $L_q$ norm plays a key role in analyzing the monotone non-increasing rearrangement of $N$ independent copies of $Z$. In particular, \cite[Corollary 3.6]{mendelson2016upper} provides a useful way to decompose a collection of random vectors into their largest and smallest coordinate components, a technique central to \cite{mendelson2016upper}. Below, we present the statement obtained by setting $q=2r$ and $\beta=1/2$ in their notation.

\begin{lemma}[{\cite[Corollary 3.6]{mendelson2016upper}}]\label{lemma:mendelson}
There exist absolute constants $c_0, c_1$, and for every $r \ge 1$, there exist constants $c_2, c_3, c_4$ and $c_5$ depending only on $r$, such that the following holds. Set
\begin{align}\label{eq:j_s}
j_s =\min \left\{\left\lceil
\frac{c_0 u^2 2^s}{\log \left(4+e N / u^2 2^s\right)}\right\rceil, N+1\right\}
\end{align}
for $u \geq c_2$. Let $\mathcal{H} \subset L_{2r}$  be a function class with cardinality at most $2^{2^{s+2}}$. Then, with probability at least $1-2 \exp \left(-c_3 u^2 2^s\right)$, the following holds: for every $h\in \mathcal{H}$, the random vector $\left(h\left(X_i\right)\right)_{i=1}^N$ can be decomposed as $ \left(h\left(X_i\right)\right)_{i=1}^N = U_h+V_h$, where $U_h\in \R^N$ is supported on the largest $j_s-1$ coordinates of $\left(\left|h\left(X_i\right)\right|\right)_{i=1}^N$ while $V_h\in \R^{N}$ is supported on the complement, and
\begin{align*}
\left\|U_h\right\|_{\ell_2} \leq c_4 u 2^{s / 2}\|h\|_{(c_5 u^2 2^s)},
\qquad\left\|V_h\right\|_{\ell_r} \leq c_1 N^{1 / r}\|h\|_{L_{2r}}.
\end{align*}

\end{lemma}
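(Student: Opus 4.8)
The plan is to obtain Lemma~\ref{lemma:mendelson} directly from \cite[Corollary 3.6]{mendelson2016upper}: one specializes the free parameters there to $q=2r$ and $\beta=1/2$ and rewrites the conclusion in the present notation, with $j_s$ as in \eqref{eq:j_s}. In that sense there is nothing to prove beyond tracking the substitution; for orientation I sketch the structure of the argument one would reproduce if a self-contained proof were wanted.

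The first step is to fix $h\in\mathcal{H}$ and pass to the monotone non-increasing rearrangement $(Z_k^*)_{k=1}^N$ of $(|h(X_i)|)_{i=1}^N$, declaring $U_h$ to be the part living on the $j_s-1$ largest coordinates and $V_h$ the part on the complement. The pointwise engine is the order-statistic tail bound: the event $\{Z_k^*>t\}$ forces at least $k$ of the $|h(X_i)|$ to exceed $t$, so by a union bound and Markov's inequality applied to the $L_p$-moments one gets $\mathbb{P}(Z_k^*>t)\le\bigl(eN\,\|h\|_{L_p}^p/(k\,t^p)\bigr)^{k}$ for every $p\ge1$. Optimizing $p\asymp\log(eN/k)$ (and taking $t\asymp\|h\|_{L_p}$) yields, with failure probability $\lesssim\exp(-\Theta(k))$, that $Z_k^*\lesssim\|h\|_{L_{\log(eN/k)}}\lesssim\sqrt{\log(eN/k)}\,\|h\|_{(u^22^s)}$ as long as $\log(eN/k)\lesssim u^22^s$, which is the case for $k\gtrsim j_s$ with $j_s$ as in \eqref{eq:j_s}.

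Granting this, the two norm estimates follow by summation. For the large block, $\|U_h\|_{\ell_2}^2=\sum_{k<j_s}(Z_k^*)^2\lesssim\|h\|_{(u^22^s)}^2\sum_{k<j_s}\log(eN/k)\asymp\|h\|_{(u^22^s)}^2\,j_s\log(eN/j_s)\asymp u^22^s\,\|h\|_{(u^22^s)}^2$, the last step using the defining relation $j_s\log(eN/j_s)\asymp u^22^s$; after adjusting the constant inside the graded norm this is the asserted bound on $\|U_h\|_{\ell_2}$. For the small block one uses only the crude estimate $\|V_h\|_{\ell_r}^r=\sum_{k\ge j_s}(Z_k^*)^r\le\sum_{i=1}^N|h(X_i)|^r$ together with concentration of the empirical $r$-th moment about $N\|h\|_{L_r}^r\le N\|h\|_{L_{2r}}^r$ (the $2r$-th moment supplying the needed variance proxy), which gives $\|V_h\|_{\ell_r}\lesssim_r N^{1/r}\|h\|_{L_{2r}}$.

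The main obstacle — and the reason the slightly awkward scale $j_s\asymp u^22^s/\log(4+eN/u^22^s)$ appears rather than simply $u^22^s$ — is the upgrade from a fixed $h$ to a statement uniform over a class $\mathcal{H}$ with $|\mathcal{H}|\le2^{2^{s+2}}$. The per-$h$ failure probability in the order-statistic bound is of order $\exp(-\Theta(k))$, and after union bounding over all $h$ and all relevant levels $k$ this must overcome the doubly-exponential cardinality while still leaving the target probability $1-2\exp(-c_3u^22^s)$; that constraint pins the split point at the level where $k\asymp u^22^s$ up to the logarithmic correction, and likewise forces the graded-norm truncation index to be a fixed multiple $c_5u^22^s$ of that scale. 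Carrying out this balancing carefully, alongside the bookkeeping for the graded norms (in particular the uniform-in-$q$ bound $\|h\|_{(q)}\lesssim\|h\|_{\psi_2}$, so that everything is ultimately controlled by $\psi_2$-data), is the technical content of \cite[Corollary 3.6]{mendelson2016upper}; no additional work is needed for the present specialization.
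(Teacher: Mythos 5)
Your proposal matches the paper exactly: the paper gives no proof of this lemma, but imports it verbatim from \cite[Corollary 3.6]{mendelson2016upper} after setting $q=2r$ and $\beta=1/2$ in Mendelson's notation, which is precisely your first paragraph. The accompanying sketch is a reasonable orientation (though note that the $\left\|V_h\right\|_{\ell_r}$ bound in the reference comes from the order-statistic estimates on the small coordinates rather than from variance-based concentration of the empirical $r$-th moment, which would not yield the required exponential failure probability); since you defer to the cited result, nothing further is required.
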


Let $(\F_s)_{s\ge 0}\subset \F$ be an admissible sequence. For $f\in \F$ and $s\ge 0$, we define $\Delta_s f := \pi_{s+1}f-\pi_sf$. To analyze $L_p$ empirical processes, we apply Lemma \ref{lemma:mendelson} to the classes $\F_s$ and $\{\Delta_s f:f\in \F\}$ with carefully chosen values of $r$ that depend on $p$, leading to the following corollary.

\begin{corollary}\label{coro:1}
 Let $\left(\F_s\right)_{s \geq 0}$ be an admissible sequence of $\F$, and let $(j_s)_{s\ge s_0}$ be the sequence defined in \eqref{eq:j_s}. Let $s_0$ be a fixed index and $p \ge 1$.  Denote $\Delta_s f := \pi_{s+1}f-\pi_sf.$ There are constants $c_1, c_2$ and $c_3$ that depend only on $p$ and for $u\ge c(p)$ where $c(p)$ is a constant depending only on $p$, an event of probability at least $1-2 \exp \left(-c_1 u^2 2^{s_0}\right)$ on which the following holds. For every $f \in \F$ and $s\ge s_0$,
\begin{align*}
\bigg(\sum_{i<j_s}\left[\left(\Delta_s f\right)^2\left(X_i\right)\right]^*\bigg)^{1/2} 
&\le c_2 u 2^{s / 2}\left\|\Delta_s f\right\|_{(c_3 u^2 2^s)},\\ 
\bigg(\sum_{i \geq j_s}\left[\left(\Delta_s f\right)^{4}\left(X_i\right)\right]^*\bigg)^{1/4} 
&\le c_2 N^{1 / 4}\left\|\Delta_s f\right\|_{L_{8}},\\
\bigg(\sum_{i<j_s}\left[\left(\pi_s f\right)^2\left(X_i\right)\right]^*\bigg)^{1/2} 
&\le c_2 u 2^{s / 2}\left\|\pi_s f\right\|_{(c_3 u^2 2^s)},\\
\bigg(\sum_{i \geq j_s}\left[\left(\pi_s f\right)^{4(p-1)}\left(X_i\right)\right]^*\bigg)^{\frac{1}{4(p-1)}} &\le c_2  N^{\frac{1}{4(p-1)}}\left\|\pi_s f\right\|_{L_{8(p-1)}}.
\end{align*}
\end{corollary}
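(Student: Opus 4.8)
The plan is to invoke Lemma \ref{lemma:mendelson} four times, each time with a function class of the right cardinality and with the value of $r$ tailored to the exponent we want on the right-hand side. First I would record that along an admissible sequence $(\F_s)_{s\ge 0}$, the family $\{\Delta_s f : f\in\F\}$ has cardinality at most $|\F_s|\cdot|\F_{s+1}| \le 2^{2^s}\cdot 2^{2^{s+1}} \le 2^{2^{s+2}}$, and likewise $|\F_s| \le 2^{2^s} \le 2^{2^{s+2}}$, so both classes meet the cardinality hypothesis of Lemma \ref{lemma:mendelson} at level $s$. For the first two displayed bounds I apply Lemma \ref{lemma:mendelson} to $\mathcal H = \{\Delta_s f:f\in\F\}$ with $r=2$: this produces, on an event of probability at least $1-2\exp(-c_3 u^2 2^s)$, the decomposition $(\Delta_s f(X_i))_{i=1}^N = U_{\Delta_s f}+V_{\Delta_s f}$ with $\|U_{\Delta_s f}\|_{\ell_2}\le c_4 u 2^{s/2}\|\Delta_s f\|_{(c_5 u^2 2^s)}$ and $\|V_{\Delta_s f}\|_{\ell_2}\le c_1 N^{1/2}\|\Delta_s f\|_{L_4}$, with $U_{\Delta_s f}$ supported on the largest $j_s-1$ coordinates and $V_{\Delta_s f}$ on the complement. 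Since $U_{\Delta_s f}$ is supported exactly on the top $j_s-1$ coordinates of $(|\Delta_s f(X_i)|)$, its $\ell_2$ norm equals $\big(\sum_{i<j_s}[(\Delta_s f)^2(X_i)]^*\big)^{1/2}$, which gives the first bound; the second follows the same way but instead I take $r=4$ in Lemma \ref{lemma:mendelson}, so that $\|V_{\Delta_s f}\|_{\ell_4}\le c_1 N^{1/4}\|\Delta_s f\|_{L_8}$, and the $\ell_4$ norm of the tail-supported vector is $\big(\sum_{i\ge j_s}[(\Delta_s f)^4(X_i)]^*\big)^{1/4}$. For the third and fourth displays I repeat the argument with $\mathcal H=\F_s$: $r=2$ yields the bound on $\big(\sum_{i<j_s}[(\pi_s f)^2(X_i)]^*\big)^{1/2}$, and $r=2(p-1)$ yields $\|V_{\pi_s f}\|_{\ell_{2(p-1)}}\le c_1 N^{1/(2(p-1))}\|\pi_s f\|_{L_{4(p-1)}}$.

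There is a mismatch between the exponents Lemma \ref{lemma:mendelson} naturally delivers and those in the statement of Corollary \ref{coro:1}: Lemma \ref{lemma:mendelson} with parameter $r$ gives an $\ell_r$ bound in terms of $\|h\|_{L_{2r}}$, whereas the second and fourth displays ask for $\ell_4$ and $\ell_{4(p-1)}$ bounds in terms of $\|\cdot\|_{L_8}$ and $\|\cdot\|_{L_{8(p-1)}}$ respectively. This is resolved by simply calling Lemma \ref{lemma:mendelson} with $r=4$ in the first case (giving $\ell_4$ and $L_8$) and with $r=4(p-1)$ in the second (giving $\ell_{4(p-1)}$ and $L_{8(p-1)}$); I was sloppy above — let me fix the parameters. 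For the tail bound on $(\Delta_s f)^4$, use $r=4$, so $\|V_{\Delta_s f}\|_{\ell_4}\le c_1 N^{1/4}\|\Delta_s f\|_{L_8}$. For the tail bound on $(\pi_s f)^{4(p-1)}$, use $r=4(p-1)$ (which is $\ge 1$ since $p\ge 1$; when $p=1$ the term is trivial), so $\|V_{\pi_s f}\|_{\ell_{4(p-1)}}\le c_1 N^{1/(4(p-1))}\|\pi_s f\|_{L_{8(p-1)}}$. Each of the four invocations holds on an event of probability at least $1-2\exp(-c_3^{(k)} u^2 2^s)$ for a constant $c_3^{(k)}$ that depends only on the corresponding $r$ (hence only on $p$); intersecting the four events over $s\ge s_0$ and summing the geometric series $\sum_{s\ge s_0}\exp(-c u^2 2^s)$, which is dominated by its first term, yields a single event of probability at least $1-2\exp(-c_1 u^2 2^{s_0})$ on which all four displays hold simultaneously for every $f\in\F$ and every $s\ge s_0$. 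The constant threshold $u\ge c(p)$ comes from the requirement $u\ge c_2$ in Lemma \ref{lemma:mendelson}, where $c_2$ depends on the largest value of $r$ used, namely $r=4(p-1)$, hence on $p$.

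The only genuinely delicate point is bookkeeping the constants and probabilities so that everything collapses to a single clean event: one must check that the finitely many constants $c_1,\dots,c_5$ produced by the four applications of Lemma \ref{lemma:mendelson} can be replaced by $p$-dependent constants $c_1,c_2,c_3$ as in the statement (take maxima/minima as appropriate), and that the union bound over $s\ge s_0$ and over the four events does not destroy the rate $\exp(-c_1 u^2 2^{s_0})$ — which it does not, since $\sum_{s\ge s_0}2\exp(-c u^2 2^s)\le C\exp(-c u^2 2^{s_0})$ for $u$ bounded below by an absolute constant, and this constant $C$ can be absorbed by slightly shrinking $c_1$. Everything else is a direct translation of Lemma \ref{lemma:mendelson}'s conclusion, using that the $\ell_r$ norm of the vector supported on the top (resp. bottom) coordinates of $(|h(X_i)|)$ equals $\big(\sum_{i<j_s}[h^r(X_i)]^*\big)^{1/r}$ (resp. $\big(\sum_{i\ge j_s}[h^r(X_i)]^*\big)^{1/r}$).
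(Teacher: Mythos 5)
Your proposal is correct and follows essentially the same route as the paper: apply Lemma \ref{lemma:mendelson} at each level $s\ge s_0$ to the class $\{\Delta_s f\}$ (cardinality at most $2^{2^{s+2}}$) with $r=4$ and to $\F_s$ with $r=4(p-1)$, read off the top-coordinate $\ell_2$ bound and the tail $\ell_r$ bound from the $U_h$/$V_h$ decomposition, and union bound over $s$ using that the series is dominated by its $s=s_0$ term once $u\ge c(p)$. The only cosmetic difference is that you invoke the lemma four times where two suffice (each application already delivers both the $\ell_2$ bound on $U_h$ and the $\ell_r$ bound on $V_h$), and your aside that $4(p-1)\ge 1$ for $p\ge 1$ should read $p\ge 5/4$ — a harmless slip, since the corollary is only used with $p\ge 2$.
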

\begin{proof}[Proof of Corollary \ref{coro:1}]
    By definition of an admissible sequence, we have $|\F_s| \le 2^{2^s}.$ Further, $|\F_s|\cdot |\F_{s+1}| \le 2^{2^s} \cdot 2^{2^{s+1}}\le 2^{2^{s+2}},$ which implies that for $\Delta_s f = \pi_{s+1}f-\pi_sf$, $|\{\Delta_s f: f \in \F\}| \le2^{2^{s+2}}$. Therefore, the result follows by applying Lemma~\ref{lemma:mendelson}, for every $s\ge s_0$, to the class $\F_s$ with the choice $r=4(p-1)\ge 1$ and to the class $\{\Delta_s f:f\in \F\}$ with $r=4$, and taking a union bound to get the probability estimate
    \[
1-2\sum_{s\ge s_0} 2\exp(-cu^2 2^{s})\cdot 2^{2^{s+2}}\ge 1-2\exp(-c_1u^2 2^{s_0}),
    \]
    where $c_1$ is a constant depending only on $p$.
\end{proof}

\subsection{Proof of Theorem \ref{thm:main2}}\label{subsec:main2}
This subsection contains the proof of our second main result, Theorem \ref{thm:main2}. The proof relies on the next result, Theorem \ref{thm:l_m_norm}, which establishes a uniform bound on the $\ell_{m}$-norm of the coordinate projection vector of the function class $\F$ and improves upon the estimates in \cite[(3.8)]{mendelson2010empirical}. 

\begin{theorem}\label{thm:l_m_norm} 
Let $X_1, \ldots, X_N \iid \mu$ be a sequence of random variables and let $\F$ be a function class. For any $m\ge 1$, 
    \begin{align*}
    \sup_{f\in \F} \bigg(\sum_{i=1}^{N}|f|^{m}\left(X_i\right)\bigg)^{1/m}&\lesssim_{m} \gamma(\F,\psi_2)+N^{1/m}d_{\psi_2}(\F)+N^{1/2m}d^{(m-1)/m}_{\psi_2}(\F)\gamma^{1/m}(\F,\psi_2)\\
    &\quad +N^{1/2m}d_{\psi_2}(\F) (\phi^{-1}(Z))^{1/m},
    \end{align*}
    where \[
    \phi(x)=2^{\min\{(\sqrt{N}x)^{2/m},\, x^2\}}-1,\quad x\ge 0,
    \] 
    and $\phi^{-1}(x)$ denotes its inverse function. Here, $Z$ is a nonnegative random variable satisfying $\E \, Z\le C$ for some absolute constant $C>0$. Moreover, if $m\ge 2$, then
    \[
    \sup_{f\in \F} \bigg(\sum_{i=1}^{N}|f|^{m}\left(X_i\right)\bigg)^{1/m}\lesssim_{m} \gamma(\F,\psi_2)+N^{1/m}d_{\psi_2}(\F)+N^{1/2m}d_{\psi_2}(\F) (\phi^{-1}(Z))^{1/m}.
    \]
\end{theorem}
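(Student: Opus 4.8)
The plan is to run a generic chaining argument in Mendelson's framework \cite{mendelson2016upper}, using Corollary~\ref{coro:1} (the specialisation of Lemma~\ref{lemma:mendelson} with the exponents $r$ tuned to $m$) as the workhorse that decomposes coordinate-projection vectors into a ``peak'' part and a ``bulk'' part. I would first separate the mean: since $\E|f(X)|^m\lesssim_m\|f\|_{\psi_2}^m\le d_{\psi_2}^m(\F)$,
\[
\sup_{f\in\F}\sum_{i=1}^N|f|^m(X_i)\ \le\ N\sup_{f\in\F}\E|f(X)|^m\ +\ \sup_{f\in\F}\Bigl|\sum_{i=1}^N\bigl(|f|^m(X_i)-\E|f|^m(X)\bigr)\Bigr|\ \lesssim_m\ N\,d_{\psi_2}^m(\F)+D,
\]
where $D$ is the uniform deviation on the right. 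Taking $m$-th roots only at the very end turns $N\,d_{\psi_2}^m(\F)$ into the announced term $N^{1/m}d_{\psi_2}(\F)$, so everything reduces to bounding $D$.

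\textbf{Chaining: the peak contribution.} Fix an admissible sequence $(\F_s)_{s\ge0}$ near-optimal for $\gamma(\F,\psi_2)$, with $\pi_s f$ the nearest point of $\F_s$ in $\psi_2$-distance and $\Delta_s f:=\pi_{s+1}f-\pi_s f$, and telescope $|f|^m(X_i)$ along the chain. Using $m$-homogeneity, $\bigl||a|^m-|b|^m\bigr|\lesssim_m(|a|+|b|)^{m-1}|a-b|$, the $s$-th link is dominated coordinatewise by $|\Delta_s f(X_i)|\,\bigl(|\pi_s f(X_i)|+|\Delta_s f(X_i)|\bigr)^{m-1}$, and H\"older reduces its control to that of $(\Delta_s f(X_i))_i$ and $(\pi_s f(X_i))_i$ at a few $m$-dependent exponents; for these, Corollary~\ref{coro:1} gives, on an event of probability $1-2\exp(-c_1 2^{s_0})$, a decomposition into a part supported on the $\sim j_s$ largest coordinates with weighted $\ell_2$-norm $\lesssim 2^{s/2}\|\cdot\|_{(c\,2^s)}$ and a bulk part on the complement with $\ell_r$-norm $\lesssim N^{1/r}\|\cdot\|_{L_{2r}}$. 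After symmetrisation one chains over $\F$, invoking Hoeffding's inequality (Lemma~\ref{lemma:Hoffding}) at each level with the parameter ($\asymp2^{s/2}$) and split point ($\asymp2^s$) that the $2^{2^s}$ union bound forces; the peak (top-coordinate) parts of the linearised increments telescope against the $2^{s/2}$-weights and, using $\|\cdot\|_{(q)}\lesssim\|\cdot\|_{\psi_2}$ and Lemma~\ref{lemma:Lambda_Gamma}, collapse to powers of $\gamma(\F,\psi_2)$. After the $m$-th root this yields the $\gamma(\F,\psi_2)$ term when $m\ge2$; for $m<2$ one additionally carries the loss $\|z\|_{\ell_m}\le j_s^{1/m-1/2}\|z\|_{\ell_2}$ for a $j_s$-supported vector $z$, with $j_s^{1/m-1/2}\lesssim\min\{(2^s)^{1/m-1/2},N^{1/m-1/2}\}$ because of the truncation $j_s\le N+1$, and splitting the $s$-sum at $2^s\asymp N$ produces the extra cross term $N^{1/2m}d_{\psi_2}^{(m-1)/m}(\F)\,\gamma^{1/m}(\F,\psi_2)$.

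\textbf{The main obstacle: the bulk and the bottom level.} The remaining contributions — the bulk parts of the links, together with the bottom-level term $\sum_i\bigl(|\pi_{s_0}f|^m(X_i)-\E|\pi_{s_0}f|^m(X)\bigr)$ over the $\le2^{2^{s_0}}$ functions $\pi_{s_0}f$ (a single function when $s_0=0$) — are genuinely random and cannot be summed link-by-link, since the crude bound $\|W_s f\|_{\ell_m}\lesssim_m N^{1/m}\|\Delta_s f\|_{\psi_2}$ carries no $2^{s/2}$-weight and diverges. Instead one uses that, after the truncation built into the $j_s$'s, each surviving coordinate is $O(d_{\psi_2}(\F))$ up to a bounded logarithmic factor, so their aggregate contribution to $\sum_i|f|^m(X_i)$ concentrates around its mean $\lesssim_m N\,d_{\psi_2}^m(\F)$ with an $\alpha$-sub-exponential tail of index $\alpha=2/m$ (because $|f(X)|^m$ has $\psi_{2/m}$-norm $\asymp_m d_{\psi_2}^m(\F)$); this is precisely the $\alpha$-sub-exponential Bernstein inequality of \cite{gotze2021concentration}, cf.\ Lemma~\ref{lemma:sub-exponential-1}, whose mixed tail $\exp\bigl(-c\min\{x^2,(\sqrt N x)^{2/m}\}\bigr)$ for a deviation of magnitude $\sqrt N\,d_{\psi_2}^m(\F)\,x$ is exactly what $\phi(x)=2^{\min\{(\sqrt N x)^{2/m},x^2\}}-1$ encodes. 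Taking $Z:=\phi\bigl(c_m\,N^{-1/2}d_{\psi_2}^{-m}(\F)\cdot(\text{this deviation})\bigr)$ for a small enough $m$-dependent constant $c_m$ gives $\P(Z>y)\lesssim(1+y)^{-2}$, hence $\E Z\le C$ with $C$ absolute, while by construction the deviation equals $c_m^{-1}\sqrt N\,d_{\psi_2}^m(\F)\,\phi^{-1}(Z)$, contributing $N^{1/2m}d_{\psi_2}(\F)\,(\phi^{-1}(Z))^{1/m}$ after the $m$-th root. I expect this step — linearising the non-linear increments, and keeping the peak/bulk split organised so that the peaks yield clean $\gamma$-powers and the bulk collapses to $N^{1/m}d_{\psi_2}(\F)$ plus the \emph{sharp} $\phi$-type fluctuation with no spurious logarithms — to be the technical heart of the argument.

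\textbf{The refinement for $m\ge2$.} For $m\ge2$ the cross term is redundant. By the weighted AM--GM inequality,
\begin{align*}
N^{1/2m}d_{\psi_2}^{(m-1)/m}(\F)\,\gamma^{1/m}(\F,\psi_2)
&=\bigl(N^{1/m}d_{\psi_2}(\F)\bigr)^{1/2}\bigl(d_{\psi_2}^{(m-2)/m}(\F)\,\gamma^{2/m}(\F,\psi_2)\bigr)^{1/2}\\
&\le\tfrac12\,N^{1/m}d_{\psi_2}(\F)+\tfrac12\,d_{\psi_2}^{(m-2)/m}(\F)\,\gamma^{2/m}(\F,\psi_2),
\end{align*}
and since the exponents $(m-2)/m$ and $2/m$ are nonnegative and sum to $1$, one has $d_{\psi_2}^{(m-2)/m}(\F)\,\gamma^{2/m}(\F,\psi_2)\le\max\{d_{\psi_2}(\F),\gamma(\F,\psi_2)\}\le d_{\psi_2}(\F)+\gamma(\F,\psi_2)$; both summands are already dominated by $N^{1/m}d_{\psi_2}(\F)+\gamma(\F,\psi_2)$, which is the claimed bound for $m\ge2$.
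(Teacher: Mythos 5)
Your proposal follows a genuinely different route from the paper's, and it contains a gap that I do not see how to close. The paper does \emph{not} prove Theorem~\ref{thm:l_m_norm} by symmetrization, H\"offding's inequality and the coordinate decomposition of Lemma~\ref{lemma:mendelson}/Corollary~\ref{coro:1} --- that is the machinery used for Theorem~\ref{thm:main2}, whose proof \emph{takes Theorem~\ref{thm:l_m_norm} as input} (it is invoked with $m=2(p-1)$ to control $\sup_{f\in\F}\bigl(\sum_i f^{2(p-1)}(X_i)\bigr)^{1/2}$). Instead, the paper runs a direct Bednorz-style chaining on the non-centered quantity, using the Orlicz function $\phi$ and the $\alpha$-sub-exponential inequality (Lemma~\ref{lemma:alpha-subexp-2}): in Claim~I each link is $\frac1N\sum_i|\pi_{s_{k+1}}f-\pi_{s_k}f|^m(X_i)$, the $m$-th power of a difference, whose fluctuation is controlled purely by $\|\Delta f\|_{\psi_2}^m$ via the $\psi_{2/m}$ quasi-norm; in Claim~II the linearization of $|g|^m-|h|^m$ is performed at the level of the $\psi_{2/m}$-norm, where the companion factor $\sum_\ell|g|^\ell|h|^{m-1-\ell}$ is bounded \emph{deterministically} by $d_{\psi_2}^{m-1}(\F)$. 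This is precisely the move that avoids any empirical companion factor.

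The gap in your argument is in the ``peak contribution'' step. After linearizing $\bigl||\pi_{s+1}f|^m-|\pi_s f|^m\bigr|\lesssim_m|\Delta_s f|\,(|\pi_{s+1}f|+|\pi_s f|)^{m-1}$ and applying Cauchy--Schwarz (or H\"older) on the index set $I_s$, the factor you can control via Corollary~\ref{coro:1} is $\bigl(\sum_{i\in I_s}(\Delta_s f)_i^2\bigr)^{1/2}\lesssim 2^{s/2}\|\Delta_s f\|_{(c2^s)}$, but the companion factor is $\sup_{f\in\F}\bigl(\sum_{i=1}^N|f|^{2(m-1)}(X_i)\bigr)^{1/2}$ --- an $\ell_{2(m-1)}$-coordinate-projection norm of exactly the type the theorem is meant to bound, and with exponent $2(m-1)\ge m$ for $m\ge2$ (for $m=2$ it is literally the same quantity), so no induction on the exponent is available. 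Your sketch asserts that the peaks ``collapse to powers of $\gamma(\F,\psi_2)$,'' but that is exactly the unproved step; the only obvious alternatives (an $\ell_\infty$ bound $\max_{i}\sup_f|f(X_i)|\lesssim d_{\psi_2}(\F)\sqrt{\log N}$, which you allude to with ``up to a bounded logarithmic factor'') reintroduce the logarithms the theorem is designed to remove. A second, smaller issue: you construct a single variable $Z$ from one application of the $\psi_{2/m}$-Bernstein inequality, but the bound must hold uniformly over $f\in\F$ and over all chain levels simultaneously; the paper achieves this with the aggregation device $Z=\sum_k N_k^{-3}\sum_{g,h\in\F_k}\phi(|\mathsf{X}(g,h)|)$ combined with $\phi^{-1}(xy)\lesssim_m\phi^{-1}(x)+\phi^{-1}(y)$ (Lemma~\ref{lemma:property_phi}), and without some such mechanism your $\E Z\le C$ does not yield a uniform statement. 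Your reduction $\sup_f\sum_i|f|^m\le N d_{\psi_2}^m(\F)+D$ and your AM--GM absorption of the cross term for $m\ge2$ are both fine and match the paper's closing step, but the core of the proof needs the Orlicz-chaining argument rather than the symmetrization route.
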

\begin{remark}\label{rem:lm_norm}
    An application of Markov's inequality to Theorem~\ref{thm:l_m_norm} yields the following high-probability bound: for any $m\ge 2$ and $w>0$, it holds with probability at least $1-2\exp(-\min\{ (\sqrt{N}w)^{2/m},w^2\})$ that
    \[
     \sup_{f\in \F} \bigg(\sum_{i=1}^{N}|f|^{m}\left(X_i\right)\bigg)^{1/m}\lesssim_{m} \gamma(\F,\psi_2)+N^{1/m}d_{\psi_2}(\F)+N^{1/2m}d_{\psi_2}(\F) w^{1/m}.
    \]
An alternative formulation of the tail bound states that for any $u\ge 1$, with probability at least $1-2\exp(-u^2)$,
\begin{align}\label{eq:rem_lm_aux1}
\sup _{f \in \mathcal{F}}\bigg(\sum_{i=1}^N|f|^m\left(X_i\right)\bigg)^{1 / m} &\lesssim_m \gamma\left(\mathcal{F}, \psi_2\right)+N^{1 / m} d_{\psi_2}(\mathcal{F})+N^{1 / 2 m} d_{\psi_2}(\mathcal{F}) \left(u^{1/m}+uN^{-1/2m}\right) \nonumber\\
&=\gamma\left(\mathcal{F}, \psi_2\right)+N^{1 / m} d_{\psi_2}(\mathcal{F})+d_{\psi_2}(\F)u+N^{1/2m}d_{\psi_2}(\F)u^{1/m} \nonumber\\
&\asymp \gamma\left(\mathcal{F}, \psi_2\right)+N^{1 / m} d_{\psi_2}(\mathcal{F})+d_{\psi_2}(\F)u,
\end{align}
where the last step follows from the inequality $N^{1/2m}u^{1/m}\le \frac{1}{2}(N^{1/m}+u^{2/m})\le N^{1/m}+u$, for $u\ge 1$ and $m\ge 2$. 

We demonstrate the optimality of the bound \eqref{eq:rem_lm_aux1} via a simple example. Let $H=\R^d$ and consider a symmetric compact set $K\subset \R^d$. Define the function class $\F=\left\{\langle \cdot,v \rangle: v\in K\right\}$, which consists of linear functions indexed by $K$. Let $X_1,\ldots,X_N$ be i.i.d. standard Gaussian random vectors in $\R^d$, and let $v^{*}\in K$ be a point attaining the maximal Euclidean norm on $\R^d$, i.e., $ \|v^{*}\|_{\ell_2}=\sup_{v\in K}\|v\|_{\ell_2}$. Observe that
\begin{align*}
\E \sup _{f \in \mathcal{F}}\bigg(\sum_{i=1}^N|f|^m\left(X_i\right)\bigg)^{1 / m} = \E \sup_{v\in K} \bigg(\sum_{i=1}^{N} |\langle X_i,v \rangle|^m \bigg)^{1/m}&\overset{\hspace{-0.1cm}\text{($\star$)}}{\gtrsim_m} \E \sup_{v\in K} |\langle X_1,v\rangle| +  N^{1/m}\|v^{*}\|_{\ell_2}\\
&\overset{\text{($\star\star$)}}{\asymp} \gamma(\F,\psi_2)+N^{1/m}d_{\psi_2}(\F) ,
\end{align*}
where ($\star$) follows by separately analyzing the case $N=1$ and the contribution of the maximal element $v^{*}\in K$, and  ($\star\star$) follows by Talagrand's majorizing-measure theorem \cite[Theorem 2.10.1]{talagrand2022upper}, which gives $\E\sup_{v\in K}\langle X_1,v \rangle\asymp \gamma(\F,\psi_2) $, along with the relation $\|v^{*}\|_{\ell_2}=\sup_{v\in K}\|v\|_{\ell_2}\asymp \sup_{f\in \F}\|f\|_{\psi_2}= d_{\psi_2}(\F)$. This confirms the necessity of the deterministic terms in \eqref{eq:rem_lm_aux1}. Furthermore, note that
\[
\sup_{v\in K} \bigg(\sum_{i=1}^{N} |\langle X_i,v \rangle|^m \bigg)^{1/m}\ge \bigg(\sum_{i=1}^{N} |\langle X_i,v^{*} \rangle|^m \bigg)^{1/m}\ge |\langle X_1,v^{*} \rangle|\overset{\mathsf{d}}{\sim} \|v^{*}\|_{2} \cdot |y|,
\]
where $y$ is a standard Gaussian random variable. This confirms the sub-Gaussian tail behavior in \eqref{eq:rem_lm_aux1}.
\end{remark}

We defer the proof of Theorem \ref{thm:l_m_norm} to Subsection \ref{subsec:l_m_norm} and proceed now to prove Theorem \ref{thm:main2}.

\begin{proof}[Proof of Theorem \ref{thm:main2}]

We first observe that the condition $0 \in \mcF$ or the symmetry of $\mcF$ (i.e., $f\in \mcF$ implies $-f \in \mcF$) ensures that $\gamma(\mcF, \psi_2)\gtrsim d_{\psi_2}(\mcF)$ (Lemma \ref{lemma:gamma_2_d_psi_2}), a fact that we will use repeatedly in the proof.

Let $(\mcF_s)_{s\ge 0}$ be an admissible sequence of $\mcF.$ Let $u\ge 4$ and $s_0\ge 0$. For any $f\in \F$ and $s\ge 0$, let $\pi_s f$ denote the nearest point in $\F_s$ to $f$ with respect to the $(u^2 2^s)$-norm. 

The proof is based on a symmetrization argument, an important tool in empirical process theory \cite[Chapter 2.3]{van2023weak}. Let $(\varepsilon_i)_{i=1}^{N}$ be independent symmetric Bernoulli random variables that are independent of $(X_i)_{i=1}^N$. We seek to establish a high-probability upper bound on the supremum of the Bernoulli process given by
\[
\sup_{f\in \F} \bigg|\sum_{i=1}^N \varepsilon_i f^p(X_i)\bigg|.
\]
To that end, we first note that any $f^p(X_i)$ can be represented as a chain, or telescoping sum, of links of the form $(\pi_{s+1}f)^p (X_i)-(\pi_{s}f)^p(X_i).$ Namely 
\[
f^p(X_i)=\sum_{s\ge s_0} ((\pi_{s+1}f)^p -(\pi_{s}f)^p) (X_i)+(\pi_{s_0}f)^p(X_i),
\]
provided that $\pi_{s} f\to f$ as $s \to \infty$.
By the triangle inequality, we have that
\begin{align}\label{eq:main2_aux1}
\bigg|\sum_{i=1}^N \varepsilon_i f^p(X_i)\bigg|\le \sum_{s\ge s_0} \bigg|\sum_{i=1}^N \varepsilon_i ((\pi_{s+1}f)^p-(\pi_s f)^p)(X_i)\bigg|+\bigg|\sum_{i=1}^N \varepsilon_i (\pi_{s_0} f)^p(X_i) \bigg|.
\end{align}

To ease notation, we write $\Delta_s f:=\pi_{s+1} f-\pi_s f$ and $(\Delta_s f)_i := (\Delta_s f)(X_i)$. We use similar notation throughout the remainder of the proof. For each $s\ge s_0$, we apply Lemma \ref{lemma:Hoffding} with $t=u 2^{s/2}$ and a set $I_s \subset \{1,\dots, N\}$ (to be specified later) to the summand indexed by $s$ in \eqref{eq:main2_aux1}. This implies that, with probability at least $1-2 \exp (-u^2 2^{s-1})$, the following holds:
\begin{align*}
\bigg|\sum_{i=1}^N \varepsilon_i ((\pi_{s+1}f)^p-(\pi_s f)^p)_i\bigg| 
&\le \sum_{i\in I_s} |((\pi_{s+1}f)^p-(\pi_s f)^p)_i|
+ u2^{s/2}\bigg(\sum_{i\in I_s^c}((\pi_{s+1}f)^p-(\pi_s f)^p)_i^2\bigg)^{1/2}\\
&\overset{\text{($\star$)}}{\le} 
p\sum_{i\in I_s} |(\Delta_s f)_i|\left(|\pi_{s+1}f|^{p-1}+|\pi_{s}f|^{p-1}\right)_i\\
&\quad + u 2^{s/2} p \bigg(\sum_{i\in I_s^c} (\Delta_s f)_i^2\left(|\pi_{s+1}f|^{p-1}+|\pi_{s}f|^{p-1}\right)_i^2\bigg)^{1/2}\\
&\lesssim_p \sum_{i\in I_s} |(\Delta_s f)_i| |(\pi_{s+1}f)_i|^{p-1}+u 2^{s/2}\bigg(\sum_{i\in I_s^c}(\Delta_s f)_i^2(\pi_{s+1}f)_i^{2(p-1)}\bigg)^{1/2}\\
&\quad +\sum_{i\in I_s} |(\Delta_s f)_i| |(\pi_{s}f)_i|^{p-1}
+u 2^{s/2} \bigg(\sum_{i\in I_s^c}(\Delta_s f)_i^2 (\pi_{s}f)_i^{2(p-1)}\bigg)^{1/2}\\
&=: T_1^{(s)} + T_2^{(s)},
\end{align*}
where in the inequality ($\star$) we used that $|x^p-y^p|\le p|x-y|(|x|^{p-1}+|y|^{p-1})$, which follows directly from applying the mean value theorem to the function $h(x)=x^p$: for $x>y$, there exists some $\xi$ with $y\le \xi\le x$ such that $x^p-y^p=(x-y)h^{\prime}(\xi)=p (x-y) \xi^{p-1},$
thus $|x^p-y^p|=p|x-y||\xi|^{p-1}\le p|x-y|\max\{|x|^{p-1},|y|^{p-1}\}\le p|x-y|(|x|^{p-1}+|y|^{p-1}).$

Let $(j_s)_{s\ge s_0}$ be the sequence defined in \eqref{eq:j_s}. For each $s\ge s_0$, we choose $I_s$ to be the union of the $j_s-1$ largest coordinates of $\left(\left|(\Delta_s f)_i \right|\right)_{i=1}^N$, the $j_s-1$ largest coordinates of $\left(\left|(\pi_{s+1} f)_i\right|\right)_{i=1}^N$, and the $j_s-1$ largest coordinates of $\left(\left|(\pi_{s} f)_i\right|\right)_{i=1}^N$.  For a vector $x=(x_i)_{i=1}^{N}\in \R^{N}$, we denote by $\left(x_i^*\right)_{i=1}^N$  the non-increasing rearrangement of $\left(\left|x_i\right|\right)_{i=1}^N$. By Cauchy-Schwarz inequality,
\begin{align*}
T_1^{(s)} &=\sum_{i\in I_s} |(\Delta_s f)_i| |(\pi_{s+1}f)_i|^{p-1}+u 2^{s/2} \bigg(\sum_{i\in I_s^c}(\Delta_s f)_i^2(\pi_{s+1}f)_i^{2(p-1)}\bigg)^{1/2} \\
&\le 
\bigg(\sum_{i\in I_s} (\Delta_s f)^2_i\bigg)^{1/2}
\bigg(\sum_{i\in I_s} (\pi_{s+1}f)^{2(p-1)}_i \bigg)^{1/2}
+u2^{s/2} \bigg(\sum_{i\in I_s^c} (\Delta_{s} f)^4_i \bigg)^{1/4} \bigg(\sum_{i\in I_s^c} (\pi_{s+1}f)^{4(p-1)}_i \bigg)^{1/4}\\
&\lesssim \bigg(\sum_{i<j_s} \left[\left(\Delta_s f\right)^2\right]^*_i\bigg)^{1/2} 
\sup_{f\in \F} \bigg(\sum_{i=1}^{N} f^{2(p-1)}_i\bigg)^{1/2}
+u 2^{s/2} \bigg(\sum_{i\ge j_s} \left[(\Delta_{s} f)^4\right]^{*}_i \bigg)^{1/4}\bigg(\sum_{i\ge j_s} \left[(\pi_{s+1}f)^{4(p-1)}\right]^{*}_i \bigg)^{1/4},
\end{align*}
where in the last step we have used that $ \sum_{i\in I_s} (\Delta_s f)^2_i\le 3 \sum_{i<j_s} \left[\left(\Delta_s f\right)^2\right]^*_i $ since $|I_{s}|\le 3(j_s-1)$, and
\begin{align*}
    \bigg(\sum_{i\in I_s} (\pi_{s+1}f)^{2(p-1)}_i \bigg)^{1/2}
    \le 
    \bigg(\sum_{i=1}^N (\pi_{s+1}f)^{2(p-1)}_i \bigg)^{1/2} 
    \le 
    \sup_{f\in \F}\bigg(\sum_{i=1}^{N} f_i^{2(p-1)}\bigg)^{1/2}.
\end{align*}
A similar argument shows that 
\begin{align*}
T_{2}^{(s)}&=\sum_{i\in I_s} |(\Delta_s f)_i| |(\pi_{s}f)_i|^{p-1}
+u2^{s/2} \bigg(\sum_{i\in I_s^c}(\Delta_s f)_i^2 (\pi_{s}f)_i^{2(p-1)}\bigg)^{1/2}\\
&\lesssim
\bigg(\sum_{i<j_s} \left[\left(\Delta_s f\right)^2\right]^*_i\bigg)^{1/2} 
\sup_{f\in \F}\bigg(\sum_{i=1}^{N} f^{2(p-1)}_i \bigg)^{1/2}
+u 2^{s/2} 
\bigg(\sum_{i\ge j_s} \left[(\Delta_{s} f)^4\right]^{*}_i \bigg)^{1/4}
\bigg(\sum_{i\ge j_s} \left[(\pi_{s}f)^{4(p-1)}\right]^{*}_i \bigg)^{1/4}.
\end{align*}

Further, repeating this argument for the last term in \eqref{eq:main2_aux1} gives that, with probability at least $1-2\exp(-u^2 2^{s_0-1})$,
\begin{align*}
&\bigg|\sum_{i=1}^N \varepsilon_i (\pi_{s_0} f)^p_i \bigg|
\le \bigg(\sum_{i<j_{s_0}} \left[\left(\pi_{s_0} f\right)^2\right]^*_i\bigg)^{1/2}\sup_{f\in \F}\bigg(\sum_{i=1}^{N} f^{2(p-1)}_i\bigg)^{1/2}
+u2^{s_0/2} \bigg(\sum_{i\ge j_{s_0}} \left[(\pi_{s_0}f)^{2p}\right]_i^{*}\bigg)^{1/2}\\
&\le \bigg(\sum_{i<j_{s_0}} \left[\left(\pi_{s_0} f\right)^2\right]^*_i\bigg)^{1/2}\sup_{f\in \F}\bigg(\sum_{i=1}^{N} f^{2(p-1)}_i\bigg)^{1/2}+u2^{s_0/2}N^{\frac{p-2}{4(p-1)}
}\bigg(\sum_{i\ge j_{s_0}} \left[(\pi_{s_0}f)^{4(p-1)}\right]^{*}_i\bigg)^{\frac{p}{4(p-1)}},
\end{align*}
where the last step follows by H\"older's inequality and the fact that $|N-j_{s_0}+1|^{\frac{p-2}{4(p-1)}} \le N^{\frac{p-2}{4(p-1)}}.$

Observe that $\left|\left\{\pi_{s} f: f \in \F \right\}\right| \le |\F_{s}| \le 2^{2^{s}}$, $\left|\left\{\pi_{s+1} f: f \in \F \right\}\right| \le |\F_{s+1}| \le 2^{2^{s+1}}$ and $\left|\left\{\Delta_s f: f \in \F \right\}\right| \le |\F_s|\cdot |\F_{s+1}|\le 2^{2^s}\cdot 2^{2^{s+1}}\le 2^{2^{s+2}}$. For $u\ge 4$, we sum the probabilities for all $s\ge s_0$ to obtain that, with $(\varepsilon_i)_{i=1}^N$ probability at least
\begin{align*}
1-2\exp(-u^2 2^{s_0-1})\cdot 2^{2^{s_0}} -\sum_{s\ge s_0} 2\exp(-u^2 2^{s-1})\cdot 2^{2^{s+2}}\ge 1-2\exp(-u^2 2^{s_0}/8),
\end{align*}
 for every $f\in \F$,
\begin{align*}
&\bigg|\sum_{i=1}^N \varepsilon_i f^p(X_i)\bigg|
 \le \sum_{s\ge s_0} \bigg|\sum_{i=1}^N \varepsilon_i ((\pi_{s+1}f)^p-(\pi_s f)^p)(X_i)\bigg|+\bigg|\sum_{i=1}^N \varepsilon_i (\pi_{s_0} f)^p(X_i)\bigg|\\
&\lesssim_p \sum_{s\ge s_0} \big(T_1^{(s)}+T_{2}^{(s)}\big)+\bigg|\sum_{i=1}^N \varepsilon_i (\pi_{s_0} f)^p(X_i)\bigg|  \\
&\lesssim_p \sup_{f\in \F}\bigg(\sum_{i=1}^{N} f^{2(p-1)}_i\bigg)^{1/2}\Bigg(\sum_{s\ge s_0}\bigg(\sum_{i<j_s} \left[\left(\Delta_s f\right)^2 \right]^*_i \bigg)^{1/2}+\bigg(\sum_{i<j_{s_0}} \left[\left(\pi_{s_0} f\right)^2 \right]^*_i \bigg)^{1/2}\Bigg)\\
&\quad +u \left(\sum_{s\ge s_0}2^{s/2} \bigg(\sum_{i\ge j_s} \left[(\Delta_{s} f)^4\right]^{*}_i \bigg)^{1/4} 
\left(
 \bigg(\sum_{i\ge j_s} \left[(\pi_{s+1}f)^{4(p-1)}\right]^{*}_i\bigg)^{1/4}
 + \bigg(\sum_{i\ge j_s} \left[(\pi_{s}f)^{4(p-1)}\right]^{*}_i\bigg)^{1/4}
 \right)
\right)\\
&\quad + u 2^{s_0/2}N^{\frac{p-2}{4(p-1)}
}\bigg(\sum_{i\ge j_{s_0}} \left[(\pi_{s_0}f)^{4(p-1)}\right]^{*}_i\bigg)^{\frac{p}{4(p-1)}}.
\end{align*}

On the other hand, we know from Corollary \ref{coro:1} that with $(X_i)_{i=1}^{N}$ probability at least $1-2\exp(-c_1u^2 2^{s_0})$, the four inequalities in Corollary \ref{coro:1} hold for every $f \in \F$ and $s\ge s_0$ simultaneously. Therefore, combining the $(\varepsilon_i)_{i=1}^N$ probability and the $(X_i)_{i=1}^N$ probability implies that there exists a constant $\tilde{c}(p)$ depending only on $p$ such that, for $u\ge c(p)\lor 4$, it holds with probability at least
\[
1-2\exp(-u^2 2^{s_0}/8)-2\exp(-c_1u^2 2^{s_0})\ge 1-2 \exp \left(-\tilde{c}(p) u^2 2^{s_{0}}\right)
\]
that, for every $f\in \F$,
\begin{align*}
\bigg|\sum_{i=1}^N \varepsilon_i f^p_i\bigg|
&\lesssim_p \sup_{f\in\F}\bigg(\sum_{i=1}^N f^{2(p-1)}_i\bigg)^{1/2} u \bigg(\sum_{s\ge s_0}2^{s/2}\|\Delta_s f\|_{(c_3 u^2 2^s)}+2^{s_0/2}\|\pi_{s_0}f\|_{(c_3 u^2 2^{s_0})}\bigg)\\
&\quad + u \bigg(\sum_{s\ge s_0} 2^{s/2} N^{1/4} \|\Delta_s f\|_{L_8} N^{1/4} \big(\|\pi_{s}f\|_{L_{8(p-1)}}^{p-1}+\|\pi_{s+1}f\|_{L_{8(p-1)}}^{p-1}\big)\bigg)\\
&\quad +u 2^{s_0/2}N^{\frac{p-2}{4(p-1)}} \left(N^{\frac{1}{4(p-1)}}\|\pi_{s_0} f\|_{L_{8(p-1)}}\right)^{p}\\
&\overset{\text{($\star$)}}{\lesssim}_p \sup_{f\in\F}\bigg(\sum_{i=1}^N f^{2(p-1)}_i\bigg)^{1/2} u \widetilde{\Lambda}_{s_0,cu}(\F) + u  N^{1/2}d^{p-1}_{\psi_2}(\F)\bigg(\sum_{s\ge s_0} 2^{s/2}\|\Delta_s f\|_{\psi_2}\bigg)+u 2^{s_0/2}N^{1/2} d^p_{\psi_2}(\F) \\
&\overset{\text{($\star\star$)}}{\lesssim}_p \sup_{f\in\F}\bigg(\sum_{i=1}^N f^{2(p-1)}_i\bigg)^{1/2} u \widetilde{\Lambda}_{s_0,cu}(\F) + u  N^{1/2}d^{p-1}_{\psi_2}(\F)\left(\gamma(\F,\psi_2)+2^{s_0/2}d_{\psi_2}(\F)\right),
\end{align*}
where ($\star$) follows by choosing an almost optimal admissible sequence in $\F$ and using the definition of $\widetilde{\Lambda}_{s_0,u}(\F)$, and the fact that $\|\pi_{s}f\|_{L_{8(p-1)}}^p\lesssim_p \|\pi_{s}f\|_{\psi_2}^p\le d_{\psi_2}^p(\F), \|\pi_{s_0}f\|_{L_{8(p-1)}}^p\lesssim_p \|\pi_{s_0} f\|_{\psi_2}^p \le d_{\psi_2}^p(\F)$; ($\star\star$) follows by
\begin{align*}
\sum_{s\ge s_0} 2^{s/2}\|\Delta_s f\|_{\psi_2}&\le \sum_{s\ge s_0} 2^{s/2}(\|f-\pi_{s}f\|_{\psi_2}+\|f-\pi_{s+1}f\|_{\psi_2}) \lesssim \gamma(\F,\psi_2).
\end{align*}

Now we apply Theorem~\ref{thm:l_m_norm} and Remark \ref{rem:lm_norm} with $m=2(p-1)\ge 2$ to obtain the high-probability bound: for any $w>0$, it holds with probability at least $1-2\exp(-\min\{ (\sqrt{N}w)^{1/(p-1)},w^2\})$ that
\[
\sup_{f\in \F} \bigg(\sum_{i=1}^{N}|f|^{2(p-1)}_i\bigg)^{1/2}
\lesssim_p \gamma^{p-1}(\F,\psi_2)+ N^{1/2}d^{p-1}_{\psi_2}(\F)+N^{1/4}d^{p-1}_{\psi_2}(\F)\sqrt{w}.
\]
Moreover, by Lemma \ref{lemma:Lambda_Gamma}, $ \Lambda_{s_0, u}(\F) \lesssim \gamma\left(\F, \psi_2\right)$, thus
\[
\widetilde{\Lambda}_{s_0,cu}(\F)=\Lambda_{s_0, u}(\F)+2^{s_0 / 2} \sup_{f \in \F}\left\|\pi_{s_0} f\right\|_{(u^2 2^{s_0})}\lesssim \gamma(\F,\psi_2)+2^{s_0/2}d_{\psi_2}(\F).
\]
Consequently, we have shown that there exists constants $c(p)$ and $\tilde{c}(p)$ such that for any $u\ge c(p)\lor 4$, it holds with probability at least $1-2\exp(-\tilde{c}(p)u ^2 2^{s_0})-2\exp(-\min \{(\sqrt{N}w)^{1/(p-1)},w^2\})$ that
\begin{align*}
\sup_{f\in \F}\bigg|\sum_{i=1}^N \varepsilon_i f^p(X_i)\bigg| & \lesssim_p \sup_{f\in\F}\bigg(\sum_{i=1}^N f^{2(p-1)}(X_i)\bigg)^{1/2} u \widetilde{\Lambda}_{s_0,cu}(\F) + u  N^{1/2}d^{p-1}_{\psi_2}(\F)\left(\gamma(\F,\psi_2)+2^{s_0/2}d_{\psi_2}(\F)\right)\\
&\lesssim_p u \left(\gamma(\F,\psi_2)+2^{s_0/2}d_{\psi_2}(\F)\right)\left(\gamma^{p-1}(\F,\psi_2)+ N^{1/2}d^{p-1}_{\psi_2}(\F)+N^{1/4}d^{p-1}_{\psi_2}(\F)\sqrt{w}\right).
\end{align*}

For $u\ge c(p)\lor 4$, we set $s_0$ such that $2^{s_0}\asymp \left(\gamma(\F,\psi_{2})/d_{\psi_2}(\F)\right)^2$ and take 
\[
w=\left(\tilde{c}(p)u^{2}2^{s_0}\right)^{1/2}\lor \frac{(\tilde{c}(p)u^2 2^{s_0})^{p-1}}{\sqrt{N}}\asymp_p u \frac{\gamma(\F,\psi_2)}{d_{\psi_2}(\F)}+\frac{1}{\sqrt{N}}u^{2(p-1)}\left(\frac{\gamma(\F,\psi_2)}{d_{\psi_2}(\F)}\right)^{2(p-1)}.
\]
Then, with probability at least $1-4\exp(-c^{\prime}(p)u^2 (\gamma(\F,\psi_{2})/d_{\psi_2}(\F))^2),$
\begin{align*}
\sup_{f\in \F}\bigg|\frac{1}{N}\sum_{i=1}^N \varepsilon_i f^p(X_i)\bigg| & \lesssim_p \frac{u}{N} \left(\gamma(\F,\psi_2)+2^{s_0/2}d_{\psi_2}(\F)\right)\left(\gamma^{p-1}(\F,\psi_2)+ N^{1/2}d^{p-1}_{\psi_2}(\F)+N^{1/4}d^{p-1}_{\psi_2}(\F)\sqrt{w}\right)\\
&\lesssim_p u\left(\frac{\gamma(\F,\psi_2)d_{\psi_2}^{p-1}(\F)}{\sqrt{N}}+\frac{\gamma^{p}(\F,\psi_2)}{N}\right) +u^{3/2}\frac{\gamma^{3/2}(\F,\psi_2)d^{p-3/2}_{\psi_2}(\F)}{N^{3/4}}+u^p \frac{\gamma^p(\F,\psi_2)}{N}\\
&\lesssim u\frac{\gamma(\F,\psi_2)d_{\psi_2}^{p-1}(\F)}{\sqrt{N}} +u^{3/2}\frac{\gamma^{3/2}(\F,\psi_2)d^{p-3/2}_{\psi_2}(\F)}{N^{3/4}}+u^p \frac{\gamma^p(\F,\psi_2)}{N},
\end{align*}
where in the last step we used that $u\le u^p$ since $u\ge 1$ and $p\ge 2$. Observe that for $p\ge 2$,
\[
u^{3/2}\frac{\gamma^{3/2}(\F,\psi_2)d^{p-3/2}_{\psi_2}(\F)}{N^{3/4}}\le u\frac{\gamma(\F,\psi_2)d_{\psi_2}^{p-1}(\F)}{\sqrt{N}}+u^p \frac{\gamma^p(\F,\psi_2)}{N},
\]
i.e., $t^{3/2}\le t+t^p N^{p/2-1}$ with $t=\frac{u\gamma(\F,\psi_2)}{\sqrt{N}d_{\psi_2}(\F)}\ge 0$. Therefore, for any $u\ge c(p)\lor 4$, it holds with probability  at least $1-4\exp(-c^{\prime}(p)u^2 (\gamma(\F,\psi_{2})/d_{\psi_2}(\F))^2)$ that
\[
\sup_{f\in \F}\bigg|\frac{1}{N}\sum_{i=1}^N \varepsilon_i f^p(X_i)\bigg| \lesssim_p  u\frac{\gamma(\F,\psi_2)d_{\psi_2}^{p-1}(\F)}{\sqrt{N}}+u^p \frac{\gamma^p(\F,\psi_2)}{N}.
\]

By the symmetrization inequality for empirical processes \cite[Theorem 1.14]{mendelson2016upper}, see also \cite{gine1984some} and \cite[Chapter 2.3]{van2023weak}, we have with probability  at least $1-C\exp(-c^{\prime}(p)u^2 (\gamma(\F,\psi_{2})/d_{\psi_2}(\F))^2)$ that
\[
\sup_{f\in \F}\bigg|\frac{1}{N}\sum_{i=1}^N f^p(X_i)-\E f^p(X)\bigg| \lesssim_p  u\frac{\gamma(\F,\psi_2)d_{\psi_2}^{p-1}(\F)}{\sqrt{N}}+u^p \frac{\gamma^p(\F,\psi_2)}{N},
\]
where $C>4$ is some absolute constant. Finally, the high-probability bound in Theorem \ref{thm:main2} follows from a simple and standard modification using Lemma \ref{lemma:simple_tail}. Furthermore, integrating out the tail bound (note that $\gamma(\F,\psi_2)\gtrsim d_{\psi_2}(\F)$) gives that
\[
\E \sup_{f\in \F} \bigg|\frac{1}{N}\sum_{i=1}^N f^p(X_i) -\E f^p(X)\bigg|\lesssim_p \frac{\gamma(\F,\psi_2)d^{p-1}_{\psi_2}(\F)}{\sqrt{N}}+\frac{\gamma^{p}(\F,\psi_2)}{N},
\]
which completes the proof.
\end{proof}

We conclude this subsection with two lemmas that were used in the proof of Theorem~\ref{thm:main2}. The proofs can be found in Appendix \ref{app:A}.

\begin{lemma}\label{lemma:gamma_2_d_psi_2}
If $0\in \F$ or $\F$ is symmetric, then $\gamma(\mcF,\psi_2) \gtrsim d_{\psi_2}(\mcF).$
 \end{lemma}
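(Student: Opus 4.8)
The plan is to reduce the claim to a lower bound on the $\psi_2$-diameter of $\F$ and then invoke the hypothesis separately in the two cases. The first step is the standard observation that, for the distance induced by a norm, $\gamma(\F,\psi_2)\gtrsim \tdiam_{\psi_2}(\F)$. Indeed, I would fix an arbitrary admissible sequence $(\F_s)_{s\ge 0}$; since $|\F_0|=1$ we may write $\F_0=\{g_0\}$, and retaining only the $s=0$ term in the defining sum (all terms being nonnegative) gives $\sup_{f\in\F}\sum_{s\ge 0}2^{s/2}d(f,\F_s)\ge \sup_{f\in\F}d(f,\F_0)=\sup_{f\in\F}\|f-g_0\|_{\psi_2}$, where $d$ denotes the $\psi_2$-distance. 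By the triangle inequality for $\|\cdot\|_{\psi_2}$, any $f_1,f_2\in\F$ satisfy $\|f_1-f_2\|_{\psi_2}\le \|f_1-g_0\|_{\psi_2}+\|f_2-g_0\|_{\psi_2}\le 2\sup_{f\in\F}\|f-g_0\|_{\psi_2}$, hence $\sup_{f\in\F}\|f-g_0\|_{\psi_2}\ge \tfrac12\tdiam_{\psi_2}(\F)$. Since the right-hand side does not depend on the admissible sequence, taking the infimum over all admissible sequences yields $\gamma(\F,\psi_2)\ge \tfrac12\tdiam_{\psi_2}(\F)$.

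The second step uses the hypothesis to bound $\tdiam_{\psi_2}(\F)$ from below by $d_{\psi_2}(\F)$. If $0\in\F$, then for every $f\in\F$ one has $\tdiam_{\psi_2}(\F)\ge \|f-0\|_{\psi_2}=\|f\|_{\psi_2}$, and taking the supremum over $f\in\F$ gives $\tdiam_{\psi_2}(\F)\ge d_{\psi_2}(\F)$. If instead $\F$ is symmetric, then $-f\in\F$ whenever $f\in\F$, so $\tdiam_{\psi_2}(\F)\ge \|f-(-f)\|_{\psi_2}=2\|f\|_{\psi_2}$ for every $f\in\F$, and again taking the supremum gives $\tdiam_{\psi_2}(\F)\ge 2d_{\psi_2}(\F)$. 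Combining the two steps, $\gamma(\F,\psi_2)\ge \tfrac12\tdiam_{\psi_2}(\F)\gtrsim d_{\psi_2}(\F)$, which is exactly the asserted inequality (with absolute constant $1/2$ in the first case and $1$ in the second).

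There is no genuine obstacle in this argument; it is elementary. The only points that require a little care are that the ``radius is at least half the diameter'' inequality must be derived from the triangle inequality for the Orlicz norm rather than merely asserted, and that the lower bound must be established for an \emph{arbitrary} admissible sequence before passing to the infimum — which is precisely why all chaining contributions beyond $s=0$ are discarded. One may also note that the conclusion is vacuously consistent when $d_{\psi_2}(\F)=\infty$, since then $\tdiam_{\psi_2}(\F)=\infty$ as well, so no separate treatment of that degenerate case is needed.
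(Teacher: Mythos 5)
Your argument is correct and follows essentially the same route as the paper: a lower bound of $\gamma(\F,\psi_2)$ by half the $\psi_2$-diameter via the $s=0$ term of an arbitrary admissible sequence (the Chebyshev-radius observation), followed by the same two-case comparison of the diameter with $d_{\psi_2}(\F)$. No gaps; the constants you obtain match those in the paper.
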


\begin{lemma}\label{lemma:simple_tail}
Suppose that for any $u\ge C_1(p)$, it holds with probability at least $1-C_2(p)\exp(-c_3(p) u^2 \omega)$ that
\[
(*)\lesssim_p uA+u^p B,
\]
where $(*)$ is some random variable, $C_1(p)\ge 1, C_2(p)\ge 1, c_3(p)> 0, \omega\ge 1, A\ge 0, B\ge 0$ are constants. Then, for any $u \ge 1$, it holds with probability at least $1-\exp(-u^2 \omega )$ that
\[
(*)\lesssim_p uA+u^{p}B.
\]
\end{lemma}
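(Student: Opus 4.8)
The plan is to deduce the conclusion from the hypothesis by a simple rescaling of the deviation parameter. Fix $u \ge 1$. The idea is to apply the assumed bound not with $u$ itself but with an inflated parameter $v := K(p)\,u$, where $K(p) \ge 1$ is a constant depending only on $p$, chosen large enough for two purposes: to ensure $v \ge C_1(p)$ so that the hypothesis is applicable (which is automatic once $K(p) \ge C_1(p)$, since $u \ge 1$), and to upgrade the failure probability from $C_2(p)\exp(-c_3(p)v^2\omega)$ down to $\exp(-u^2\omega)$.

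First I would record what the hypothesis gives at level $v$: with probability at least $1 - C_2(p)\exp(-c_3(p)K(p)^2 u^2\omega)$ one has $(*)\lesssim_p vA + v^pB = K(p)uA + K(p)^pu^pB \le K(p)^p(uA + u^pB)$, using $K(p)\ge 1$ in the last step; absorbing the factor $K(p)^p$ into the $p$-dependent implied constant, this is exactly the desired estimate $(*)\lesssim_p uA + u^pB$. Next I would do the probability bookkeeping: the inequality $C_2(p)\exp(-c_3(p)K(p)^2u^2\omega) \le \exp(-u^2\omega)$ is equivalent to $\log C_2(p) \le (c_3(p)K(p)^2 - 1)u^2\omega$, and since $u\ge 1$ and $\omega \ge 1$ it suffices to have $c_3(p)K(p)^2 - 1 \ge \log C_2(p)$. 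Because $C_2(p)\ge 1$ the right-hand side is nonnegative, so this holds as soon as $K(p)\ge \sqrt{(1+\log C_2(p))/c_3(p)}$. Hence the choice
\[
K(p) := \max\left\{1,\ C_1(p),\ \sqrt{\frac{1+\log C_2(p)}{c_3(p)}}\right\}
\]
works, depends only on $p$, and completes the argument.

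There is no real obstacle here; the lemma is a routine boosting step. The only points requiring a little care are keeping $K(p)\ge 1$ so that inflating $u$ does not shrink the $u^pB$ term, and exploiting $u^2\omega \ge 1$ to discard that factor when comparing the additive constant $\log C_2(p)$ against $(c_3(p)K(p)^2-1)u^2\omega$.
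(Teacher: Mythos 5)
Your proof is correct and follows essentially the same route as the paper: both arguments rescale the deviation parameter so that the prefactor $C_2(p)$ and the constant $c_3(p)$ are absorbed into the exponent, and then absorb the resulting $K(p)^p$ factor into the $\lesssim_p$ constant. Your version, which inflates $u$ by a fixed constant $K(p)$ and uses $u^2\omega\ge 1$, is in fact slightly cleaner than the paper's, which solves for the exactly matching level $v$ and then needs a separate case analysis for small $v$.
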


\subsection{Proof of Theorem~\ref{thm:l_m_norm}}\label{subsec:l_m_norm}

This subsection presents the proof of Theorem~\ref{thm:l_m_norm}, which establishes a uniform bound on the $\ell_{m}$-norm of the coordinate projection vector of the function class $\F$. The proof leverages generic chaining and builds upon ideas from Bednorz \cite[Section 5]{bednorz2014} for analyzing quadratic processes, while also addressing new challenges that arise when handling arbitrary $m\ge 1$. A key component of our approach is the use of a different Young function, defined as $\phi(x)=2^{\min\{(\sqrt{N}x)^{2/m},\, x^2\}}-1$, along with $\alpha$-sub-exponential inequalities \cite{gotze2021concentration} to precisely characterize the tail behavior. This leads to a distinct cut-off point in the chaining process ---depending on $m$--- that separates sub-Gaussian and sub-exponential regimes. Similar cut-off phenomena between these two regimes first appeared in the analysis of quadratic empirical processes (see, e.g., \cite[Section 5]{mendelson2010empirical}, \cite[Theorem 5.5]{dirksen2015tail} and \cite[Theorem 14.2.6]{talagrand2022upper}). However, our primary focus here is to control the non-centralized $\ell_m$-norm of the coordinate projection vector for any $m\ge 1$.

\begin{proof}[Proof of Theorem~\ref{thm:l_m_norm}]

Let $(\mcF_s)_{s\ge 0}\subset \F$ be an increasing sequence which satisfies $\F_s \subset \F_{s+1}, \left|\F_0\right|=1, \left|\F_s\right| \leq 2^{2^s}-1$ for $s \ge 1,$ and $\bigcup_{s=0}^{\infty}\F_s$ is dense in $\F$. Note that here we slightly change the requirement of the size of $\F_s$ (see Definition \ref{def:admissible_sequence_gamma_functional}) for technical reasons. For any $f\in \F$ and $s\ge 0$, $\pi_s f$ is the nearest point in $\F_s$ to $f$ with respect to the $\psi_2$-norm. We further assume that $(\F_{s})_{s\ge 0}$ is an almost optimal admissible sequence, ensuring that
\[
\sup_{f\in \F}\sum_{s=0}^{\infty} 2^{s/2} \|f-\pi_s f\|_{\psi_2}\lesssim \gamma(\F,\psi_2).
\]

We first focus on the case $m>1$. Let $N_s:=2^{2^{s}}-1$ for $s\ge 1$, and let $s^{*}$ be the integer that satisfies $2^{(m-1)s^{*}}\le N<2^{(m-1)(s^{*}+1)}$. Recall that $\phi(x)=2^{\min\{(\sqrt{N}x)^{2/m},\, x^2\}}-1$, and its inverse function is
\[
\phi^{-1}(x)=\max\left\{\sqrt{\log_2(1+x)},\sqrt{\frac{(\log_2(1+x))^{m}}{N}}\right\},\quad x\ge 0.
\]
Then,
\begin{align*}
    \phi^{-1}(N_{s})=\phi^{-1}(2^{2^{s}}-1)=\begin{cases}
        \big(\frac{2^{sm}}{N}\big)^{1/2} & \text{ if } \ s>s^{*},\\
        2^{s/2} & \text{ if } \ s\le s^{*}.
    \end{cases}
\end{align*}

We prove the conclusion by establishing the following two claims:

\textbf{Claim I.}
\begin{align*}
\sup_{f\in \F}\bigg(\sum_{i=1}^N\left|f-\pi_{s^{*}} f\right|^m\left(X_i\right)\bigg)^{1/m}\lesssim_m  \gamma(\F,\psi_2)+N^{1/m}d_{\psi_2}(\F)+N^{1/2m}d_{\psi_2}(\F) (\phi^{-1}(Z_1))^{1/m},
\end{align*}
where $Z_1 \ge 0$ and $\E \, Z_1\le 3$.

\textbf{Claim II.}
\begin{align*}
\sup_{f\in \F} \bigg(\sum_{i=1}^{N} |\pi_{s^{*}} f|^m (X_i)\bigg)^{1/m} &\lesssim_m N^{1/m} d_{\psi_2}(\F)+ N^{1/2m}d^{(m-1)/m}_{\psi_2}(\F)\gamma^{1/m}(\F,\psi_2)\\
&\quad +N^{1/2m} d_{\psi_2}(\F)\left((\phi^{-1}(Z_2))^{1/m}+(\phi^{-1}(Z_3))^{1/m}\right),
\end{align*}
where $Z_2\ge 0, Z_3 \ge 0$ and $\E \, Z_2\le 3, \E \, Z_3\le 3$.

Claim I and Claim II together with the triangle inequality imply that
\begin{align*}
   & \sup_{f\in \F} \bigg(\sum_{i=1}^N |f|^m (X_i)\bigg)^{1/m} \le\sup_{f\in \F}\bigg(\sum_{i=1}^N\left|f-\pi_{s^{*}} f\right|^m\left(X_i\right)\bigg)^{1/m}+\sup_{f\in \F} \bigg(\sum_{i=1}^{N} |\pi_{s^{*}} f|^m (X_i)\bigg)^{1/m} \\
    &\lesssim_m \gamma(\F,\psi_2)+N^{1/m}d_{\psi_2}(\F)+N^{1/2m}d^{(m-1)/m}_{\psi_2}(\F)\gamma^{1/m}(\F,\psi_2)\\
    &\quad +N^{1/2m}d_{\psi_2}(\F) \left((\phi^{-1}(Z_1))^{1/m}+(\phi^{-1}(Z_2))^{1/m}+(\phi^{-1}(Z_3))^{1/m}\right)\\
    &\lesssim_m \gamma(\F,\psi_2)+N^{1/m}d_{\psi_2}(\F)+N^{1/2m}d^{(m-1)/m}_{\psi_2}(\F)\gamma^{1/m}(\F,\psi_2)+N^{1/2m}d_{\psi_2}(\F) (\phi^{-1}(Z_1+Z_2+Z_3))^{1/m}\\
    &=\gamma(\F,\psi_2)+N^{1/m}d_{\psi_2}(\F)+N^{1/2m}d^{(m-1)/m}_{\psi_2}(\F)\gamma^{1/m}(\F,\psi_2)+N^{1/2m}d_{\psi_2}(\F) (\phi^{-1}(Z))^{1/m},
\end{align*}
where $Z:=Z_1+Z_2+Z_3$ satisfies $\E \, Z=\E \, Z_1+\E \, Z_2+\E \, Z_3\le 9$. This proves the first inequality in Theorem~\ref{thm:l_m_norm}.

For $m \ge 2$, note that 
\[
N^{1/2m}d^{(m-1)/m}_{\psi_2}(\F)\gamma^{1/m}(\F,\psi_2)\le \gamma(\F,\psi_2)+N^{1/m}d_{\psi_2}(\F),
\]
or, equivalently,
\[
N^{1/2m}\left(\frac{\gamma(\F,\psi_2)}{d_{\psi_2}(\F)}\right)^{1/m}\le \frac{\gamma(\F,\psi_2)}{d_{\psi_2}(\F)}+N^{1/m},
\]
which can be verified by noting that $xy\le x^2+y^m$ where $x=N^{1/2m}\ge 1$ and $y=\left(\frac{\gamma(\F,\psi_2)}{d_{\psi_2}(\F)}\right)^{1/m}\ge 0$. This completes the proof.

When $m=1$, there is no $s^*$ satisfying $2^{(m-1)s^{*}}\le N<2^{(m-1)(s^{*}+1)}$. However, in this case, we have $\phi(x)=2^{\min\{Nx^{2},x^2\}}-1 \equiv 2^{x^2}-1$ and its inverse function is given by $\phi^{-1}(x)\equiv \sqrt{\log_2(1+x)}$. As will become clear in the proof of \textbf{Claim II} below, \textbf{Claim II} remains valid for \emph{any} integer $s^{*}$ in this setting. By taking $s^{*}\to \infty$, we obtain
\begin{align*}
\sup_{f\in \F} \sum_{i=1}^N |f(X_i)|&=\sup_{f\in \F}\lim_{s^*\to \infty} \sum_{i=1}^{N}|\pi_{s^*} f|(X_i)\le \lim_{s^{*}\to \infty}\sup_{f\in \F} \sum_{i=1}^{N}|\pi_{s^*} f|(X_i)\\
&\lesssim N d_{\psi_2}(\F)+ N^{1/2}\gamma(\F,\psi_2)+N^{1/2} d_{\psi_2}(\F)\left((\phi^{-1}(Z_2))+(\phi^{-1}(Z_3))\right)\\
&\le N d_{\psi_2}(\F)+ N^{1/2}\gamma(\F,\psi_2)+N^{1/2} d_{\psi_2}(\F)\phi^{-1}(Z),
\end{align*}
where $Z:=Z_2+Z_3$ satisfies $\E \, Z=\E \, Z_2+\E \, Z_3\le 6$. Consequently,
\[
\sup_{f\in \F} \frac{1}{N}\sum_{i=1}^{N}|f\left(X_i\right)|\lesssim d_{\psi_2}(\F)+\frac{\gamma(\F,\psi_2)}{\sqrt{N}}+\frac{d_{\psi_2}(\F) \phi^{-1}(Z)}{\sqrt{N}},
\]
as desired.

Now we prove \textbf{Claim I} and \textbf{Claim II}.

\textbf{Proof of Claim I:}
We use the following chaining
\[
\left(f-\pi_{s^{*}} f\right)\left(X_i\right)=\sum_{k=0}^{\infty}\left(\pi_{s_{k+1}} f-\pi_{s_k} f\right)\left(X_i\right),\quad 1\le i \le N, 
\]
where the sequence $\left\{s_k\right\}_{k=0}^{\infty}$ depends on $f$ and is defined as:
$s_0=s^{*}$ and
\[
s_k:=\inf \left\{s>s_{k-1}: 2 \|f- \pi_s f\|_{\psi_2}< \|f- \pi_{s_{k-1}}f\|_{\psi_2}\right\},\quad k \ge 1.
\]
Let $\Omega_k(f)=\frac{1}{N} \sum_{i=1}^N\left|\pi_{s_{k+1}} f-\pi_{s_k} f\right|^m\left(X_i\right)$. By the triangle inequality,
\[
\bigg(\frac{1}{N} \sum_{i=1}^N\left|f-\pi_{s^{*}} f\right|^m\left(X_i\right)\bigg)^{1/m} \le \sum_{k=0}^{\infty}\left(\Omega_k(f)\right)^{1/m}.
\]
Moreover, since $m> 1$,
\[
\left(\Omega_k(f)\right)^{1/m} \le\left(\left|\Omega_k(f)-\E \,\Omega_k(f)\right|\right)^{1/m}+\left(\E \, \Omega_k(f)\right)^{1/m}.
\]
Clearly,  
\[
\left(\E\, \Omega_k(f)\right)^{1/m}=\|\pi_{s_{k+1}} f-\pi_{s_{k}} f\|_{L_m}  \lesssim_m \|\pi_{s_{k+1}} f- \pi_{s_{k}} f\|_{\psi_2}.
\]

Observe that
\[
\phi\bigg(\frac{ \sqrt{N}|\Omega_k(f)-\E\, \Omega_k(f)|}{C(m)  \|\pi_{s_{k+1}} f-\pi_{s_k} f\|^m_{\psi_2}}\bigg)\le \sum_{g,h\in \F_{s_{k+1}}}\phi\left(|\mathsf{X_1}(g,h)|\right), 
\]
where 
\[
\mathsf{X_1}(g,h):= \frac{\sum_{i=1}^N (|g-h|^m -\E|g-h|^{m})(X_i)}{C(m) \sqrt{N}\|g-h\|^m_{\psi_2}}.
\]
By Lemma \ref{lemma:alpha-subexp-2}, we have $\E \, \phi(|\mathsf{X_1}(g,h)|)\le 3$ for any $g,h\in \F$.
Therefore,
\begin{align*}
\frac{\sqrt{N}|\Omega_k(f)-\E\, \Omega_k(f)|}{C(m) \|\pi_{s_{k+1}} f-\pi_{s_k} f\|^m_{\psi_2}} &\le \phi^{-1}\bigg(\sum_{g,h\in \F_{s_{k+1}}}\phi\left(|\mathsf{X_1}(g,h)|\right)\bigg)\\
&=\phi^{-1}\bigg(N_{s_{k+1}}^3 \cdot \frac{1}{N_{s_{k+1}}^3}\sum_{g,h\in \F_{s_{k+1}}}\phi\left(|\mathsf{X_1}(g,h)|\right)\bigg)\\
&\overset{\text{($\star$)}}{\lesssim}_m \phi^{-1}(N_{s_{k+1}})+\phi^{-1}\bigg(\frac{1}{N_{s_{k+1}}^3}\sum_{g,h\in \F_{s_{k+1}}}\phi\left(|\mathsf{X_1}(g,h)|\right)\bigg)\\
&\le \phi^{-1}(N_{s_{k+1}})+\phi^{-1}(Z_1),
\end{align*}
where ($\star$) follows from Lemma \ref{lemma:property_phi} and $
\phi^{-1}(N_{s_{k+1}}^3)\lesssim_m \phi^{-1}(N_{s_{k+1}}^2)+\phi^{-1}(N_{s_{k+1}})\lesssim_m 3\phi^{-1}(N_{s_{k+1}}).$
  In the last inequality,
\[
Z_1:=\sum_{k=1}
^{\infty} \frac{1}{N_{k}^3} \sum_{g,h\in \F_k} \phi(|\mathsf{X_1}(g,h)|)\ge 0
\]
satisfies $\E\, Z_1\le 3\sum_{k=1}^{\infty} \frac{1}{N_{k}^3}\cdot N_k^2=3\sum_{k=1}^{\infty}\frac{1}{N_k}\le 3$.

Now we have
\[
|\Omega_k(f)-\E\, \Omega_k(f)|^{1/m}\lesssim_m N^{-1/2m} \|\pi_{s_{k+1}} f-\pi_{s_k} f\|_{\psi_2} \left((\phi^{-1}(N_{s_{k+1}}))^{1/m}+(\phi^{-1}(Z_1))^{1/m}\right).
\]
Since $s_{k+1}\ge s_1> s_0=s^{*}$, we have
\[
\phi^{-1}(N_{s_{k+1}})=\phi^{-1}(2^{2^{s_{k+1}}}-1)=\frac{1}{\sqrt{N}}2^{s_{k+1} m/2}.
\]
Then, 
\begin{align*}
    \left(\Omega_k(f)\right)^{1/m} &\le \left(\left|\Omega_k(f)-\E\, \Omega_k(f)\right|\right)^{1/m}+\left(\E \, \Omega_k(f)\right)^{1/m}\\
    &\lesssim_m N^{-1/2m} \|\pi_{s_{k+1}} f-\pi_{s_{k}} f\|_{\psi_2} \left( 2^{s_{k+1}/2}N^{-1/2m}+(\phi^{-1}(Z_1))^{1/m}\right) + \|\pi_{s_{k+1}} f- \pi_{s_{k}} f\|_{\psi_2}\\
    &=N^{-1/m} 2^{s_{k+1}/2} \|\pi_{s_{k+1}} f-\pi_{s_{k}} f\|_{\psi_2}+\|\pi_{s_{k+1}} f- \pi_{s_{k}} f\|_{\psi_2}\left(1+N^{-1/2m}(\phi^{-1}(Z_1))^{1/m}\right).
\end{align*}
We sum over $k$ to get
\begin{align}\label{eq:proposition_aux0}
    &\bigg(\frac{1}{N} \sum_{i=1}^N\left|f-\pi_{s^{*}} f\right|^m\left(X_i\right)\bigg)^{1/m} \le \sum_{k=0}^{\infty}\left(\Omega_k(f)\right)^{1/m}\nonumber\\
    &\lesssim_m N^{-1/m} \bigg(\sum_{k=0}^{\infty} 2^{s_{k+1}/2} \|\pi_{s_{k+1}} f-\pi_{s_{k}} f\|_{\psi_2}\bigg)+\bigg(\sum_{k=0}^{\infty}\|\pi_{s_{k+1}} f- \pi_{s_{k}} f\|_{\psi_2}\bigg)\left(1+N^{-1/2m}(\phi^{-1}(Z_1))^{1/m}\right)\nonumber\\
    &\overset{\text{($\star$)}}{\lesssim}_m N^{-1/m} \gamma(\F,\psi_2)+\|f-\pi_{s^{*}} f\|_{\psi_2}\left(1+N^{-1/2m}(\phi^{-1}(Z_1))^{1/m}\right)\nonumber\\
    &\lesssim N^{-1/m} \gamma(\F,\psi_2)+d_{\psi_2}(\F)\left(1+N^{-1/2m}(\phi^{-1}(Z_1))^{1/m}\right),
\end{align}
where in ($\star$) we used two facts
\begin{align}\label{eq:proposition_aux1}
\sum_{k=0}^{\infty} 2^{s_{k+1}/2} \|\pi_{s_{k+1}} f-\pi_{s_{k}} f\|_{\psi_2}\lesssim \gamma(\F,\psi_2),
\end{align}
and
\begin{align}\label{eq:proposition_aux2}
\sum_{k=0}^{\infty}\|\pi_{s_{k+1}} f- \pi_{s_{k}} f\|_{\psi_2}\lesssim \|f-\pi_{s^{*}} f\|_{\psi_2}.
\end{align}

The inequality \eqref{eq:proposition_aux1} follows by
\begin{align*}
    \sum_{k=0}^{\infty} 2^{s_{k+1}/2} \|\pi_{s_{k+1}} f-\pi_{s_{k}} f\|_{\psi_2}&\le \sum_{k=0}^{\infty} 2^{s_{k+1}/2} \left(\|\pi_{s_{k+1}} f-f\|_{\psi_2}+\|\pi_{s_k} f-f\|_{\psi_2}\right)\\
    &\overset{\text{($\star$)}}{\le} \sum_{k=0}^{\infty} 2^{s_{k+1}/2}\left(\|\pi_{s_{k+1}} f-f\|_{\psi_2}+2\|\pi_{s_{k+1}-1}f-f\|_{\psi_2}\right)\\
    &\le \sum_{n=s^{*}}^{\infty} 2^{n/2} \|f-\pi_n f\|_{\psi_2}+2\sqrt{2}\sum_{k=0}^{\infty}2^{(s_{k+1}-1)/2}\|f-\pi_{s_{k+1}-1}f\|_{\psi_2}\\
    &\le 4\sum_{n=0}^{\infty} 2^{n/2} \|f-\pi_n f\|_{\psi_2}\lesssim \gamma(\F,\psi_2),
\end{align*}
where ($\star$) follows by the definition of $\{s_k\}_{k=0}^{\infty}$: $\|\pi_{s_k} f-f\|_{\psi_2}\le 2 \|\pi_{s_{k+1}-1}f-f\|_{\psi_2}$. 

To prove the inequality \eqref{eq:proposition_aux2}, we observe that by the definition of $\{s_k\}_{k=0}^{\infty}$, it holds for every $k\ge 0$ that $\|f-\pi_{s_{k+1}} f\|_{\psi_2}\le \frac{1}{2}\|f-\pi_{s_k} f\|_{\psi_2}$. It is straightforward to show using induction that $\|f-\pi_{s_k} f\|_{\psi_2}\le \frac{1}{2^k}\|f-\pi_{s_0}f\|_{\psi_2}=\frac{1}{2^k}\|f-\pi_{s^{*}} f\|_{\psi_2}$. Therefore,
\begin{align*}
\sum_{k=0}^{\infty}\|\pi_{s_{k+1}} f-\pi_{s_{k}} f\|_{\psi_2}&\le \sum_{k=0}^{\infty}\|f-\pi_{s_k} f\|_{\psi_2}+\|f-\pi_{s_{k+1}} f\|_{\psi_2}\\
&\le \|f-\pi_{s^{*}} f\|_{\psi_2}\bigg(\sum_{k=0}^{\infty} \frac{1}{2^k}+\frac{1}{2^{k+1}}\bigg)\lesssim \|f-\pi_{s^{*}} f\|_{\psi_2}.
\end{align*}

Consequently, we have shown in \eqref{eq:proposition_aux0} that
\begin{align*}
\sup_{f\in \F}\bigg(\sum_{i=1}^N\left|f-\pi_{s^{*}} f\right|^m\left(X_i\right)\bigg)^{1/m}\lesssim_m  \gamma(\F,\psi_2)+N^{1/m}d_{\psi_2}(\F)+N^{1/2m}d_{\psi_2}(\F) (\psi^{-1}(Z_1))^{1/m},
\end{align*}
where $Z_1 \ge 0$ satisfies that $\E\, Z_1\le 3$. This completes the proof of \textbf{Claim I}.

\textbf{Proof of Claim II:}
For $f\in \F$, we define a sequence $\left\{s_k\right\}_{k=0}^{\infty}$ which depends on $f$ as follows: $s_0=0$ and
\[
s_k:=\min \left\{\inf \left\{s>s_{k-1}: 2 \|f- \pi_s f\|_{\psi_2}< \|f- \pi_{s_{k-1}} f\|_{\psi_2}\right\}, s^{*}\right\},\quad k\ge 1.
\]
Note that $s_k=s^{*}$ for a finite $k \ge 0$.

Let $\widetilde{\Omega}_k(f)=\frac{1}{N} \sum_{i=1}^N |\pi_{s_k} f|^m\left(X_i\right)-|\pi_{s_{k-1}}f|^m\left(X_i\right)$. We perform the chaining
\begin{align*}
& \bigg|\frac{1}{N} \sum_{i=1}^N|\pi_{s^{*}} f|^m(X_i)-\E|\pi_{s^{*}} f|^m-|\pi_0 f|^{m}(X_i)+\E|\pi_0 f|^m \bigg| \\
& \le  \sum_{k=1}^{\infty} \bigg|\frac{1}{N} \sum_{i=1}^N |\pi_{s_k} f|^m\left(X_i\right)-|\pi_{s_{k-1}}f|^m\left(X_i\right)-\E|\pi_{s_k} f|^m+\E|\pi_{s_{k-1}} f|^m\bigg|\\
&=: \sum_{k=1}^{\infty}\left|\widetilde{\Omega}_k(f)-\E\,\widetilde{\Omega}_k(f)\right|.
\end{align*}

Observe that
\[
\phi\bigg(\frac{\sqrt{N}|\widetilde{\Omega}_k(f)-\E\,\widetilde{\Omega}_k(f)|}{C(m) d_{\psi_2}^{m-1}(\F)\|\pi_{s_k} f-\pi_{s_{k-1}}f\|_{\psi_2}}\bigg)
\le 
\sum_{g,h\in \F_{s_k}}\phi\left(|\mathsf{X_2}(g,h)|\right), 
\]
where
\[
\mathsf{X_2}(g,h)=\frac{\sum_{i=1}^N (|g|^m-|h|^m-(\E |g|^m-\E|h|^m)) (X_i)}{C(m)\sqrt{N}d^{m-1}_{\psi_2}(\F)\|g-h\|_{\psi_2}}.
\]
By Lemma \ref{lemma:alpha-subexp-2}, we have $\E  \,\phi(|\mathsf{X_2}(g,h)|)\le 3$ for any $g,h\in \F$. Then, with the same technique as used in the proof of \textbf{Claim I},
\begin{align*}
\frac{\sqrt{N} |\widetilde{\Omega}_k(f)-\E\,\widetilde{\Omega}_k(f)|}{C(m) d_{\psi_2}^{m-1}(\F)\|\pi_{s_k} f-\pi_{s_{k-1}}f\|_{\psi_2}}&\le \phi^{-1}\bigg(\sum_{g,h\in \F_{s_k}}\phi\left(|\mathsf{X_2}(g,h)|\right)\bigg)\\
&\lesssim_m \phi^{-1}(N_{s_k})+\phi^{-1}(Z_2),
\end{align*}
where
\[
Z_2:=\sum_{k=1}^{\infty} \frac{1}{N_k^3} \sum_{g,h\in \F_k} \phi(|\mathsf{X_2}(g,h)|)
\]
satisfies $\E \, Z_2\le 3\sum_{k=1}^{\infty} \frac{1}{N_{k}^3}\cdot N_k^2=3\sum_{k=1}^{\infty}\frac{1}{N_k}\le 3$. Note that $\phi^{-1}(N_{s_k})=\phi^{-1}(2^{2^{s_k}}-1)=2^{s_k/2}$ since $s_k\le s^{*}$ by definition.
Then,
\begin{align*}
  &  \bigg|\frac{1}{N} \sum_{i=1}^N|\pi_{s^{*}} f|^m(X_i)-\E|\pi_{s^{*}} f|^m-|\pi_0 f|^{m}(X_i)+\E|\pi_0 f|^m \bigg| \le \sum_{k=1}^{\infty}\left|\widetilde{\Omega}_k(f)-\E\,\widetilde{\Omega}_k(f)\right|\\
    &\lesssim_m \sum_{k=1}^{\infty} N^{-1/2} d_{\psi_2}^{m-1}(\F)  \|\pi_{s_k} f-\pi_{s_{k-1}}f\|_{\psi_2} \left(2^{s_k/2}+\phi^{-1}(Z_2)\right)\\
    &\lesssim N^{-1/2} d_{\psi_2}^{m-1}(\F) \gamma(\F,\psi_2)+ N^{-1/2}d_{\psi_2}(\F)^{m}\phi^{-1}(Z_2),
\end{align*}
where in the last step we used that
\[
\sum_{k=1}^{\infty} 2^{s_k/2}\|\pi_{s_k} f-\pi_{s_{k-1}}f\|_{\psi_2}\lesssim \gamma(\F,\psi_2),\quad \sum_{k=1}^{\infty}\|\pi_{s_k} f-\pi_{s_{k-1}}f\|_{\psi_2}\lesssim \|f-\pi_0 f\|_{\psi_2}\lesssim d_{\psi_2}(\F).
\]
Moreover,
\[
\bigg|\frac{1}{N}\sum_{i=1}^{N} |\pi_0 f|^m(X_i)-\E |\pi_0 f|^m \bigg|\lesssim_m N^{-1/2}d_{\psi_2}^m(\F)\phi^{-1}(Z_3),
\]
where
\[
Z_3:=\phi\bigg(\frac{\sum_{i=1}^N (|\pi_{0} f|^m-\E |\pi_0 f|^m)(X_i)}{C(m)\sqrt{N}d^m_{\psi_2}(\F)}\bigg)
\] 
and $\E \, Z_3\le 3$ by Lemma \ref{lemma:alpha-subexp-2}.
Consequently,
\begin{align*}
\sup_{f\in \F} \bigg|\frac{1}{N}\sum_{i=1}^{N} |\pi_{s^{*}} f|^m (X_i)-\E |\pi_{s^{*}} f|^m (X) \bigg|\lesssim_m N^{-1/2}d^{m-1}_{\psi_2}(\F)\gamma(\F,\psi_2)+N^{-1/2} d^m_{\psi_2}(\F)(\phi^{-1}(Z_2)+\phi^{-1}(Z_3)).
\end{align*}
Since $\sup_{f\in \F}\E |\pi_{s^{*}} f|^m \lesssim_m d^m_{\psi_2}(\F) $, then
\[
\sup_{f\in \F} \frac{1}{N}\sum_{i=1}^{N} |\pi_{s^{*}} f|^m (X_i)\lesssim_m d^m_{\psi_2}(\F)+ N^{-1/2}d^{m-1}_{\psi_2}(\F)\gamma(\F,\psi_2)+N^{-1/2} d^m_{\psi_2}(\F)(\phi^{-1}(Z_2)+\phi^{-1}(Z_3)),
\]
and thus
\begin{align*}
\sup_{f\in \F} \bigg(\sum_{i=1}^{N} |\pi_{s^{*}} f|^m (X_i)\bigg)^{1/m} \nonumber&\lesssim_m N^{1/m} d_{\psi_2}(\F)+ N^{1/2m}d^{(m-1)/m}_{\psi_2}(\F)\gamma^{1/m}(\F,\psi_2)\\
&\quad +N^{1/2m} d_{\psi_2}(\F)\left((\phi^{-1}(Z_2))^{1/m}+(\phi^{-1}(Z_3))^{1/m}\right).
\end{align*}
This completes the proof of \textbf{Claim II}.
\end{proof}

We conclude this subsection with two lemmas that were used in the proof of Theorem~\ref{thm:l_m_norm}. The proofs can be found in Appendix \ref{app:B}.

\begin{lemma}\label{lemma:property_phi} For any $x,y\ge 0$, $\phi^{-1}(xy)\lesssim_m \phi^{-1}(x)+\phi^{-1}(y)$.
\end{lemma}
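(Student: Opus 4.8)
The plan is to reduce the inequality to three elementary facts: the closed form $\phi^{-1}(t)=\max\{\sqrt{L(t)},\,\sqrt{L(t)^m/N}\}$ with $L(t):=\log_2(1+t)$ recorded above; the subadditivity of the square root, $\sqrt{a+b}\le\sqrt a+\sqrt b$ for $a,b\ge 0$; and the convexity bound $(a+b)^m\le 2^{m-1}(a^m+b^m)$ for $a,b\ge 0$ and $m\ge 1$. I would first dispose of the trivial cases $x=0$ or $y=0$, where $xy=0$ and $\phi^{-1}(0)=0$ so the inequality is immediate, and henceforth assume $x,y>0$.

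The key step is the observation that $L$ turns the product $(1+x)(1+y)$ into a sum: since $1+xy\le (1+x)(1+y)$ and $L$ is non-decreasing, $L(xy)\le\log_2\bigl((1+x)(1+y)\bigr)=L(x)+L(y)$. From here I would estimate the two terms inside the maximum defining $\phi^{-1}(xy)$ separately. For the sub-Gaussian term, $\sqrt{L(xy)}\le\sqrt{L(x)+L(y)}\le\sqrt{L(x)}+\sqrt{L(y)}\le\phi^{-1}(x)+\phi^{-1}(y)$, where the last step uses $\sqrt{L(x)}\le\phi^{-1}(x)$ directly from the closed form. For the heavier-tailed term, applying $(a+b)^m\le 2^{m-1}(a^m+b^m)$ with $a=L(x)$, $b=L(y)$ gives $L(xy)^m/N\le 2^{m-1}\bigl(L(x)^m+L(y)^m\bigr)/N$, hence $\sqrt{L(xy)^m/N}\le 2^{(m-1)/2}\bigl(\sqrt{L(x)^m/N}+\sqrt{L(y)^m/N}\bigr)\le 2^{(m-1)/2}\bigl(\phi^{-1}(x)+\phi^{-1}(y)\bigr)$, again invoking the closed form for $\phi^{-1}$ in the last step.

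Taking the maximum of the two bounds yields $\phi^{-1}(xy)\le 2^{(m-1)/2}\bigl(\phi^{-1}(x)+\phi^{-1}(y)\bigr)$, which is exactly the claimed inequality with implicit constant $2^{(m-1)/2}$ depending only on $m$. There is no genuine obstacle in this argument; the only points worth a moment's care are that the constant must be independent of $N$ (which it manifestly is, since $N$ enters only through the two terms that are each controlled by $\phi^{-1}(x)$ and $\phi^{-1}(y)$ on the nose), and that the two-regime structure of $\phi^{-1}$ causes no difficulty because the estimate is carried out termwise inside the maximum and only then recombined.
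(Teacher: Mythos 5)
Your proof is correct and follows essentially the same route as the paper's: both rely on $1+xy\le(1+x)(1+y)$, subadditivity of the square root, and the elementary bound $(a+b)^m\lesssim_m a^m+b^m$ applied to the two terms defining $\phi^{-1}$. The only cosmetic difference is that you estimate each term of the maximum separately and keep the explicit constant $2^{(m-1)/2}$, whereas the paper first bounds the maximum by the sum; the content is identical.
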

 
\begin{lemma}\label{lemma:alpha-subexp-2}
Let $X_1,\dots, X_N$ be independent random variables. There exists a constant $C(m)$ depending only on $m$ such that the following holds. For any $g,h\in \F$, we define
\[
\mathsf{X_1}(g,h)= \frac{\sum_{i=1}^N (|g-h|^m -\E|g-h|^{m})(X_i)}{C(m) \sqrt{N}\|g-h\|^m_{\psi_2}},\quad \mathsf{X_2}(g,h)=\frac{\sum_{i=1}^N (|g|^m-|h|^m-(\E |g|^m-\E|h|^m)) (X_i)}{C(m)\sqrt{N}d^{m-1}_{\psi_2}(\F)\|g-h\|_{\psi_2}},
\]
and
\[
\mathsf{X}_{3}(g)=\frac{\sum_{i=1}^N (|g|^m-\E |g|^m)(X_i)}{C(m)\sqrt{N}d^m_{\psi_2}(\F)}.
\]
It holds that
\[
\E\, \phi(|\mathsf{X_1}(g,h)|)\le 3,\quad \E \, \phi(|\mathsf{X_2}(g,h)|)\le 3,\quad \E \,\phi(|\mathsf{X_3}(g)|)\le 3.
\]
\end{lemma}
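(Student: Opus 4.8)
The plan is to view each of $\mathsf{X_1}(g,h),\mathsf{X_2}(g,h),\mathsf{X_3}(g)$ as a normalized, centered sum of $N$ i.i.d.\ random variables, apply an $\alpha$-sub-exponential Bernstein inequality with $\alpha=2/m$, and then integrate the resulting tail estimate against $\phi$. The substantive step is to control the $\psi_{2/m}$-Orlicz norm of the summands. Write $\mathsf{X_1}(g,h)=\bigl(C(m)\sqrt{N}\,\|g-h\|_{\psi_2}^m\bigr)^{-1}\sum_{i=1}^N Y_i^{(1)}$ with $Y_i^{(1)}:=|g-h|^m(X_i)-\E|g-h|^m$, and analogously set $Y_i^{(2)}:=(|g|^m-|h|^m)(X_i)-\E(|g|^m-|h|^m)$ and $Y_i^{(3)}:=|g|^m(X_i)-\E|g|^m$. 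Since the $X_i$ are i.i.d.\ $\mu$, the variable $(g-h)(X_i)$ has $\psi_2$-norm $\|g-h\|_{\psi_2}$, so $|g-h|^m(X_i)$ has $\psi_{2/m}$-norm exactly $\|g-h\|_{\psi_2}^m$; combined with $\E|g-h|^m=\|g-h\|_{L_m}^m\lesssim_m\|g-h\|_{\psi_2}^m$ and the (quasi-)triangle inequality for $\|\cdot\|_{\psi_{2/m}}$ (whose constant depends only on $m$), this gives $\|Y_i^{(1)}\|_{\psi_{2/m}}\lesssim_m\|g-h\|_{\psi_2}^m$. For $Y_i^{(2)}$ one uses the elementary estimate $\bigl||g|^m-|h|^m\bigr|\le m\,|g-h|\,\bigl(|g|^{m-1}+|h|^{m-1}\bigr)$ (the mean-value bound already exploited in the proof of Theorem~\ref{thm:main2}) together with the generalized Hölder inequality for Orlicz norms, $\bigl\||g-h|\,|g|^{m-1}\bigr\|_{\psi_{2/m}}\le\|g-h\|_{\psi_2}\,\|g\|_{\psi_2}^{m-1}$ (and likewise with $h$), yielding $\|Y_i^{(2)}\|_{\psi_{2/m}}\lesssim_m d_{\psi_2}^{m-1}(\F)\,\|g-h\|_{\psi_2}$; the same reasoning gives $\|Y_i^{(3)}\|_{\psi_{2/m}}\lesssim_m d_{\psi_2}^m(\F)$.

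With $K_1:=\|g-h\|_{\psi_2}^m$, $K_2:=d_{\psi_2}^{m-1}(\F)\|g-h\|_{\psi_2}$, $K_3:=d_{\psi_2}^m(\F)$, Step~1 shows each $Y_i^{(j)}$ is centered with $\|Y_i^{(j)}\|_{\psi_{2/m}}\le c_m K_j$ for a constant $c_m$ depending only on $m$. I would then apply the $\alpha$-sub-exponential Bernstein-type inequality of \cite{gotze2021concentration} (see also Lemma~\ref{lemma:sub-exponential-1}) with $\alpha=2/m$ — for $m=1$ this is just the sub-Gaussian Hoeffding bound, with the second term below dominated by the first — to obtain, for $j=1,2,3$,
\[
\P\Bigl(\Bigl|\sum_{i=1}^N Y_i^{(j)}\Bigr|\ge t\Bigr)\le 2\exp\!\Bigl(-c_m'\min\bigl\{t^2/(N K_j^2),\ (t/K_j)^{2/m}\bigr\}\Bigr).
\]
Substituting $t=C(m)\sqrt{N}K_j x$ and using that $\min\{D^2x^2,\,D^{2/m}(\sqrt{N}x)^{2/m}\}\ge D^{2/m}\min\{x^2,(\sqrt{N}x)^{2/m}\}$ for $D\ge1$ and $m\ge1$ (here $D=C(m)/c_m\ge1$ once $C(m)$ is large) gives $\P(|\mathsf{X_j}|\ge x)\le 2\exp\bigl(-c_m''\min\{x^2,(\sqrt{N}x)^{2/m}\}\bigr)$ for all $x\ge0$, where $c_m''=c_m'(C(m)/c_m)^{2/m}$ depends only on $m$ and, crucially, can be made arbitrarily large by enlarging $C(m)$. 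Finally, since by construction of $\phi$ one has $\min\{(\phi^{-1}(s))^2,(\sqrt{N}\phi^{-1}(s))^{2/m}\}=\log_2(1+s)$, so that $\P(|\mathsf{X_j}|>\phi^{-1}(s))\le 2(1+s)^{-c_m''/\ln 2}$, the layer-cake formula yields
\[
\E\,\phi(|\mathsf{X_j}|)=\int_0^\infty\P\bigl(|\mathsf{X_j}|>\phi^{-1}(s)\bigr)\,ds\le\int_0^\infty\frac{2\,ds}{(1+s)^{c_m''/\ln 2}}=\frac{2\ln 2}{c_m''-\ln 2}
\]
whenever $c_m''>\ln 2$; choosing $C(m)$ large enough that $c_m''\ge\tfrac{5}{3}\ln 2$ forces $\E\,\phi(|\mathsf{X_j}|)\le 3$ for each $j$, which is the claim.

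The main obstacle is Step~1, i.e.\ the $\psi_{2/m}$-norm bounds for the centered summands: one must carefully deploy the generalized Hölder inequality to handle products such as $|g-h|\,|g|^{m-1}$, and verify that the centering bounds and the (quasi-)triangle-inequality constants — in particular those in the quasi-norm regime $2/m<1$, i.e.\ $m>2$ — depend only on $m$. Once this is in place the remaining work is bookkeeping of $m$-dependent constants and the elementary observation that the exponent $c_m''$ in the tail bound genuinely grows with $C(m)$, so that it can be pushed above $\tfrac{5}{3}\ln 2$ uniformly in $N$, $g$ and $h$.
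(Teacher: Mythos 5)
Your proposal is correct and follows essentially the same route as the paper: bound the $\psi_{2/m}$ quasi-norms of the centered summands via quasi-norm and Orlicz--H\"older estimates, apply the $\alpha$-sub-exponential concentration inequality of \cite{gotze2021concentration} with $\alpha=2/m$, and integrate the resulting tail against $\phi$ by the layer-cake formula. The only cosmetic differences are that the paper handles $|g|^m-|h|^m$ by the algebraic factorization $(|g|-|h|)\sum_{\ell=0}^{m-1}|g|^{\ell}|h|^{m-1-\ell}$ where you use the mean-value bound (which has the mild advantage of covering non-integer $m$), and the paper fixes the absolute constant in the exponent so the tail is at most $2/\phi(u)^2$ rather than tracking a generic exponent $c_m''$ as you do.
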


\section{Conclusions, discussion, and future directions}\label{sec:conclusions}

This paper has established sharp dimension-free concentration inequalities for simple random tensors. In so doing, we have developed the theory of $L_p$ empirical processes, leveraging generic chaining methods to obtain sharp high-probability upper bounds on their suprema. Our framework extends classical results on quadratic and product empirical processes to higher-order settings.

Several questions arise from this work. For Gaussian random vectors, Theorem \ref{thm:main1} provides matching upper and lower bounds for the deviation of the $p$-th order sample moment tensor from its expectation, up to a constant that depends only on $p$. In the special case of $p=2$, corresponding to the sample covariance, the recent work \cite{han2022exact} derived a sharp version of the Koltchinskii-Lounici theorem with optimal constants by exploiting the Gaussian min-max theorem, namely
\begin{align*}
\E \bigg\|\frac{1}{N}\sum_{i=1}^{N} X_i \otimes X_i -\mathbb{E}\, X \otimes X \bigg\| \le \|\Sigma\| \bigg(1+\frac{C}{\sqrt{r(\Sigma)}}\bigg)\bigg(2\sqrt{\frac{r(\Sigma)}{N}}+\frac{r(\Sigma)}{N}\bigg),
 \end{align*}
 where $C>0$ is some absolute constant. A natural question is whether similarly tight constants, which should depend on $p$, can be obtained for sample moment tensors of arbitrary order $p\ge 2$ in the Gaussian setting.

More recently, there has been growing interest in the estimation of smooth functionals of covariance operators \cite{koltchinskii2021asymptotically, koltchinskii2018asymptotic,koltchinskii2021estimation,koltchinskii2022estimation}, including spectral projectors \cite{koltchinskii2014asymptotics,koltchinskii2016asymptotics,koltchinskii2017normal,jirak2024quantitative}, functionals of principal components \cite{koltchinskii2017new,koltchinskii2020efficient}, and trace functionals and spectral measures of the covariance \cite{koltchinskii2024estimation}. An interesting direction for future research is to explore the estimation of functionals of higher-order moment tensors. We anticipate that Theorem \ref{thm:main1} could serve as a first step for further study in this direction.

Another interesting question concerns the \emph{rank} of random tensors. In this paper, we have studied the concentration properties of sums of i.i.d. simple (rank-one) tensors. For $p=2$, \cite[Theorem 1.2]{zhivotovskiy2024dimension} establishes a general-rank version of Koltchinskii-Lounici theorem using the variational
principle and the PAC-Bayesian method, which provides a dimension-free deviation bound for sums of independent positive semi-definite matrices that are not necessarily rank one. More recently, \cite{nakakita2024dimension2} obtained dimension-free bounds for sums of matrices and operators with dependence and arbitrary heavy tails. A natural and interesting direction for future research is to extend Theorem \ref{thm:main1} to the general-rank setting, deriving a concentration inequality for sums of i.i.d. symmetric tensors of arbitrary rank.

The recent work \cite{mikulincer2022clt} proves the following high-dimensional central limit theorem (CLT): Let $X, X_1,\ldots,X_N \iid \mu$ be random vectors in $\R^d$, where $\mu$ is either a symmetric uniform log-concave measure or a product measure. Then,
\[
\frac{1}{\sqrt{N}}\sum_{i=1}^N \left(X_i^{\otimes p}-\E\, X^{\otimes p}\right) \to \text{ Gaussian},
\]
provided that $N\gg d^{2p-1}$. Given the dimension-free nature of our operator norm bound in Theorem \ref{thm:main1}, an intriguing question is whether a dimension-free CLT can be established for the normalized sum of rank-one tensors.

For a sub-Gaussian function class $\F$, Theorem \ref{thm:main2} establishes a sharp concentration inequality for the supremum of the empirical process \[
\frac{1}{N}\sum_{i=1}^N |f|^p(X_i)-\E |f|^p(X)
\]
for any $p\ge 2$. This result improves existing bounds and lifts classical bounds on the suprema of quadratic empirical processes to higher-order settings. Additionally, as noted in Remark \ref{remark:general_p}, our analysis extends to the regime $p\in [3/2,2)$. A natural direction for future work is to further generalize these results to any $p>0$. More broadly, as discussed in \cite{mendelson2010empirical}, a fundamental objective in empirical process theory is to analyze the deviation 
\[
\frac{1}{N}\sum_{i=1}^N \ell(f(X_i))-\E\, \ell(f(X)),
\]
uniformly in $f\in \F$, where $\ell$ is a reasonable real-valued function and $\F$ is a suitable function class. Our results in this paper address this question for the cases $\ell(t)=t^p$ and $\ell(t)=|t|^p$ with any $p\ge 2$, when $\F$ is sub-Gaussian.

Another important and closely related line of research in empirical process theory concerns the study of \emph{multiplier} empirical processes \cite{ledoux2013probability,van2023weak}, given by
\[
f \mapsto \sum_{i=1}^N \xi_i f\left(X_i\right),
\]
where $f\in \F$ for some function class $\F$, and $(\xi_i)_{i=1}^{N}$ are random variables called ``multipliers''. We refer to \cite{mendelson2016upper,mendelson2017multiplier,han2019convergence, han2022multiplier} for recent developments. In particular, \cite{han2022multiplier}  developed theory and tools for analyzing multiplier $U$-processes of the form
\[
f \mapsto \sum_{1 \leq i_1<\cdots<i_m \leq N} \xi_{i_1} \cdots \xi_{i_m} f\left(X_{i_1}, \ldots, X_{i_m}\right),
\]
which is a natural higher-order generalization of the multiplier empirical processes. The framework developed in this paper provides a natural way for studying an additional generalization:
\[
(f_1,\ldots,f_{p})\mapsto \sum_{i=1}^{N} \xi_{i} f_1(X_i)\cdots f_p(X_i),
\]
where $f_k\in \F^{(k)} (1\le k\le p)$ for some function classes $(\F^{(k)})_{k=1}^{p}$. We leave a detailed exploration of this setting as an interesting direction for future research.

In this paper, we focused on Gaussian and sub-Gaussian random variables in developing tensor concentration inequalities, as well as on sub-Gaussian function classes $\F$ in the study of $L_p$ empirical processes. An important direction for future research is to extend these results to more general measures and weakly bounded empirical processes, such as those associated with isotropic log-concave measures \cite{guedon2007lp,mendelson2008weakly,mendelson2010empirical,mendelson2016upper} and Gaussian mixtures \cite[Section 4]{bandeira2021spectral}. We anticipate that our analysis can be extended to general distributions by further incorporating the powerful framework developed in \cite{mendelson2016upper}. As shown in \cite[Section 4.3]{mendelson2016upper}, the graded version of $\gamma$-type functionals offers advantages over the $\psi_2$-based complexity measure when analyzing unconditional log-concave ensembles. We expect these favorable properties to persist as we extend the results to higher-order settings.

Finally, we highlight several promising directions for future research in the broader field of concentration inequalities and statistical applications. An important open problem is to develop a sharp convex concentration inequality for random symmetric tensors, as proposed by Vershynin in \cite[Section 1.5]{vershynin2020concentration}. We expect that our tensor concentration inequality for sums of simple random tensors can offer useful insights toward addressing this challenge. In the literature on robust statistics, recent advances have focused on dimension-free bounds for mean and covariance estimation under adversarial corruption and heavy-tailed distributions \cite{lugosi2019mean,mendelson2020robust, abdalla2024covariance,oliveira2024improved,minasyan2023statistically}. Our results on dimension-free bounds for simple random tensors could serve as a foundation for developing dimension-free robust (moment) tensor estimation methods. Furthermore, we anticipate that Theorem \ref{thm:main1} may have applications in various tensor-related statistical problems \cite{mccullagh2018tensor,bi2021tensors,auddy2024tensors}, including tensor estimation \cite{han2022optimal,diakonikolas2024implicit}, tensor completion \cite{yuan2016tensor,xia2021statistically}, tensor regression \cite{luo2024tensor}, tensor singular value decomposition (SVD) \cite{zhang2018tensor,zhang2019optimal}, and tensor embeddings \cite{jiang2022near}. Additionally, our results may be relevant to independent component analysis (ICA), where fourth-order moment tensor (kurtosis) estimation plays a crucial role \cite{auddy2023large}. More broadly, our findings could contribute to advances in learning theory and empirical risk minimization \cite{even2021concentration}, and the method of moments \cite{sherman2020estimating}, with potential applications to cryo-EM and multi-reference alignment problems \cite{perry2019sample,bandeira2020optimal,dou2024rates}.


\section*{Funding}
This work was funded by NSF CAREER award NSF DMS-2237628.

\section*{Data availability}

No new data were generated or analyzed in support of this research.

\bibliographystyle{plain}
\bibliography{references}

\renewcommand{\theHsection}{A\arabic{section}}

\begin{appendix}

\section*{Appendix}
 
\section{Proofs of auxiliary results in Section \ref{subsec:main2}}\label{app:A}

\begin{proof}[Proof of Lemma \ref{lemma:gamma_2_d_psi_2}]

Let $ \tdiam_{\psi_2}(\mcF):=\sup_{f,g\in \F} \|f-g\|_{\psi_2}$. Note first that 
\begin{align*}
    \gamma(\mcF, \psi_2) 
    &= \inf \sup_{f\in \mcF} \sum_{s \ge 0} 2^{s/2} \|f-\pi_sf\|_{\psi_2}
   \overset{\text{($\star$)}}{\ge}
    \inf \sup_{f\in \mcF} \|f-\pi_0 f\|_{\psi_2}\\
    &\overset{\text{($\star\star$)}}{=}
    \inf_{g\in \mcF} \sup_{f\in \mcF} \|f-g\|_{\psi_2}
    \overset{\text{($\star\star\star$)}}{\ge}
    \frac{1}{2} \tdiam_{\psi_2}(\mcF),
\end{align*}
where ($\star$) follows by the positivity of the terms in the sum for $s \ge 1$, ($\star\star$) holds by the fact that the infimum is over all admissible sequences with $|\mcF_0|=1$, and ($\star\star\star$) holds by a standard bound on $\inf_{g\in \mcF} \sup_{f\in \mcF} \|f-g\|_{\psi_2}$, which is referred to in the literature as the Chebyshev radius of $\mcF$. Now, in the symmetric case, we have 
\begin{align*}
    d_{\psi_2}(\mcF) 
    =\frac{1}{2}
    \sup_{f \in \mcF} \| 2f\|_{\psi_2}
    =\frac{1}{2}
    \sup_{f \in \mcF} \| f-(-f)\|_{\psi_2}
    \le \frac{1}{2}\sup_{f,g \in \mcF} \| f-g\|_{\psi_2}
    = \frac{1}{2} \tdiam_{\psi_2}(\mcF).
\end{align*}
In the case that $0 \in \mcF$
\begin{align*}
    d_{\psi_2}(\mcF) 
    = \sup_{f \in \mcF} \| f - 0\|_{\psi_2}
    \le 
    \sup_{f,g \in \mcF} \| f-g\|_{\psi_2}
    =\tdiam_{\psi_2}(\mcF).
\end{align*}
Therefore, in either case we have $\gamma(\mcF,\psi_2) \gtrsim d_{\psi_2}(\mcF).$
\end{proof}

\begin{proof}[Proof of Lemma \ref{lemma:simple_tail}]

Without loss of generality, we assume $C_1(p)\ge  \sqrt{\frac{2 \log C_2(p)}{c_3(p)}}\lor 1$. For $u\ge C_1(p)$, let 
\[
   v:=\sqrt{\frac{c_3(p)u^2\omega-\log C_2(p)}{\omega}}.
   \]
Then, $ \exp(-v^2 \omega)=C_2(p)\exp(-c_3(p)u^2\omega)$. Observe that
   \[
   v= \sqrt{c_3(p)u^2-\frac{\log C_2(p)}{\omega}}\ge \sqrt{\frac{c_3(p)}{2}u^2}=\sqrt{\frac{c_3(p)}{2}}u,
   \]
   where the inequality follows since $\frac{1}{2}c_3(p)u^2\ge \frac{1}{2}c_3(p) C_1^2(p)\ge \log C_2(p)\ge \frac{\log C_2(p)}{\omega} $.
   Therefore, for any $v\ge \sqrt{c_3(p)C_1^2(p)-\frac{\log C_2(p)}{\omega}}=:C_4(p,\omega)$, it holds with probability at least $1-\exp(-v^2 \omega)$ that
   \[
   (*)\lesssim_p uA+u^p B \lesssim_p vA+v^p B. 
   \]
If $C_4(p,\omega)\le 1$, the proof is complete. Otherwise, if $C_4(p,\omega)>1$, we further observe that $C_4(p,\omega)\le \sqrt{c_3(p)C_1^2(p)}$ since $\frac{\log C_2(p)}{\omega}\ge 0$. In this case, for $1\le v < C_4(p,\omega)$, it holds with probability at least $1-\exp(-C^2_4(p,\omega) \omega)\ge 1-\exp(-v^2\omega)$ that
   \[
   (*)\lesssim_p (vA+v^p B)\big|_{v=C_4(p,\omega)}\le (vA+v^p B)\big|_{v=\sqrt{c_3(p)C_1^2(p)}}\lesssim_p A+B\lesssim_p vA+v^p B.
   \]
   Combining the cases $v\in [C_4(p,\omega),\infty)$ and $v\in [1,C_4 (p,\omega))$ completes the proof.
\end{proof}
 
 \section{Proofs of auxiliary results in Section \ref{subsec:l_m_norm}}\label{app:B}

\begin{proof}[Proof of Lemma \ref{lemma:property_phi}]
Observe that
    \begin{align*}
    \phi^{-1}(xy)&=\max\left\{\sqrt{\log_2(1+xy)},\sqrt{\frac{(\log_2(1+xy))^{m}}{N}}\right\}\\
    &\le \sqrt{\log_2(1+xy)}+\sqrt{\frac{(\log_2(1+xy))^{m}}{N}}\\
    &\overset{\text{($\star$)}}{\le} \sqrt{\log_2(1+x)}+\sqrt{\log_2(1+y)}+\sqrt{\frac{\left(\log_2(1+x)+\log_2(1+y)\right)^{m}}{N}}\\
    &\lesssim_m \sqrt{\log_2(1+x)}+\sqrt{\log_2(1+y)}+\sqrt{\frac{(\log_2(1+x))^{m}}{N}}+\sqrt{\frac{(\log_2(1+y))^{m}}{N}}\\
    &\lesssim \phi^{-1}(x)+\phi^{-1}(y),
    \end{align*}
where ($\star$) follows by $(1+xy)\le (1+x)(1+y)$ and $\sqrt{\log_2(1+x)+\log_2(1+y)}\le \sqrt{\log_2(1+x)}+\sqrt{\log_2(1+y)}$.
\end{proof}

\begin{proof}[Proof of Lemma \ref{lemma:alpha-subexp-2}]

We will use the following $\alpha$-sub-exponential concentration inequality from \cite{gotze2021concentration}. For $\alpha>0,$ we define the Orlicz (quasi-norm) of a random variable $X$ by
\[
\|X\|_{\psi_\alpha}:=\inf \left\{c>0: 
\mathbb{E}_{X \sim \mu}\insquare{ \exp \inparen{\frac{|X|^\alpha}{c^\alpha}}} \leq 2\right\}.
\]
\begin{lemma}[{$\alpha$-sub-exponential concentration, \cite[Corollary 1.4]{gotze2021concentration}}]\label{lemma:sub-exponential-1}
    Let $X_1,\dots, X_N$ be independent, centered random variables with $\|X_i\|_{\psi_\alpha} \le M$ for $\alpha \in (0,1].$ Then for $a \in \R^N$ and any $u \ge 0$, 
    \begin{align*}
        \P\bigg(\Big|\sum_{i=1}^N a_i X_i \Big|\ge u\bigg) \le 2 \exp 
        \left(
        -\frac{1}{C_\alpha} \min\bigg\{
        \frac{u^2}{M^2\|a\|_{\ell_2}^2}
        ,
        \frac{u^\alpha}{M^\alpha \max_i |a_i|^\alpha}
        \bigg\}
        \right).
    \end{align*}
\end{lemma}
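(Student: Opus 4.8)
The plan is to deduce the tail bound from a moment estimate for $\sum_i a_iX_i$ and then apply Markov's inequality with an optimally chosen exponent. After rescaling ($X_i\mapsto X_i/M$) we may assume $M=1$, so that $\|X_i\|_{\psi_\alpha}\le 1$ and it suffices to prove that, for every $p\ge 2$,
\[
\Big\|\sum_{i=1}^N a_iX_i\Big\|_{L_p}\ \lesssim_\alpha\ \sqrt{p}\,\|a\|_{\ell_2}+p^{1/\alpha}\|a\|_{\ell_\infty}.
\]
Granting this, Markov's inequality gives $\P\big(\big|\sum_i a_iX_i\big|\ge u\big)\le\big(\big\|\sum_i a_iX_i\big\|_{L_p}\big/u\big)^p\le e^{-p}$ as soon as $u$ exceeds a fixed constant multiple of $\sqrt{p}\,\|a\|_{\ell_2}+p^{1/\alpha}\|a\|_{\ell_\infty}$; choosing $p$ equal to a small constant times $\min\{u^2/\|a\|_{\ell_2}^2,\ (u/\|a\|_{\ell_\infty})^\alpha\}$ (the asserted inequality being trivial when this quantity is below a constant) produces exactly the claimed two-regime exponent, with the prefactor $2$ being harmless slack.

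To prove the moment estimate I would first record the Orlicz--moment equivalence $\|Y\|_{\psi_\alpha}\le b\iff\sup_{q\ge 1}q^{-1/\alpha}\|Y\|_{L_q}\asymp_\alpha b$, so that $\|a_iX_i\|_{L_q}\lesssim_\alpha|a_i|\,q^{1/\alpha}$ for every $q\ge1$. Next, by symmetrization (replace each $X_i$ by $\varepsilon_iX_i$ with independent Rademacher signs $\varepsilon_i$, at the cost of a factor $2$) followed by the Khintchine--Kahane inequality applied conditionally on $(X_i)_{i=1}^N$,
\[
\Big\|\sum_i a_iX_i\Big\|_{L_p}\ \lesssim\ \sqrt{p}\,\Big\|\Big(\sum_i a_i^2X_i^2\Big)^{1/2}\Big\|_{L_p}\ =\ \sqrt{p}\,\Big\|\sum_i a_i^2X_i^2\Big\|_{L_{p/2}}^{1/2},
\]
so matters reduce to estimating the $L_{p/2}$-norm of the sum of squares. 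I would then split $\sum_i a_i^2X_i^2$ into its mean $\sum_i a_i^2\,\E X_i^2\lesssim\|a\|_{\ell_2}^2$ and the centered fluctuation $\sum_i a_i^2(X_i^2-\E X_i^2)$, whose independent summands satisfy $\|a_i^2(X_i^2-\E X_i^2)\|_{\psi_{\alpha/2}}\lesssim a_i^2$. Iterating this square-and-split step until the residual moment exponent drops below $2$ (at which point the $L_2$-norm is just the deterministic variance), or else invoking the classical sharp moment inequalities for sums of independent $\psi_\alpha$ random variables (refinements of Rosenthal's inequality), collapses the fluctuation to a deterministic quantity and yields the stated bound. One point to handle carefully is to invoke the interpolation inequality $\|a\|_{\ell_q}\le\|a\|_{\ell_2}^{2/q}\|a\|_{\ell_\infty}^{1-2/q}$ for $q\ge2$ wherever $\max$-type terms arise; this is precisely what keeps the final estimate dimension-free, i.e.\ free of any factor of $N$.

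The step I expect to be the main obstacle is exactly this control of the sum-of-squares term --- obtaining $\big\|\sum_i a_i^2X_i^2\big\|_{L_{p/2}}\lesssim_\alpha\|a\|_{\ell_2}^2+p^{2/\alpha}\|a\|_{\ell_\infty}^2$, rather than a bound with a larger power of $p$ that would degrade the tail. For $\alpha=2$ and $\alpha=1$ the iteration terminates at once and the estimate is textbook, but for $\alpha\in(0,1)$ the heavier-tailed second regime, governed by $\|a\|_{\ell_\infty}$, has to be propagated correctly through every level of the recursion; the cleanest way to sidestep this bookkeeping is to quote the corresponding linear $\psi_\alpha$ moment inequality from the literature, after which all remaining steps are routine.
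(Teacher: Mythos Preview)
The paper does not prove this lemma at all: it is quoted verbatim as \cite[Corollary 1.4]{gotze2021concentration} and used as a black box inside the proof of Lemma~\ref{lemma:alpha-subexp-2}. So there is no ``paper's own proof'' to compare against.

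That said, your sketch is a reasonable route to the result, and the moment bound
\[
\Big\|\sum_i a_iX_i\Big\|_{L_p}\ \lesssim_\alpha\ \sqrt{p}\,\|a\|_{\ell_2}+p^{1/\alpha}\|a\|_{\ell_\infty}
\]
is indeed the correct equivalent formulation. The concern you flag is real: the square-and-split iteration is clean when $\alpha\in\{1,2\}$ but for general $\alpha\in(0,1)$ the summands $a_i^2(X_i^2-\E X_i^2)$ live in $\psi_{\alpha/2}$, and iterating drives the Orlicz index to zero, so one has to track constants carefully or stop after one step and invoke a sharp Rosenthal/Lata{\l}a-type moment inequality for heavy-tailed sums. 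The proof in G\"otze--Sambale--Sinulis avoids this recursion entirely by a truncation argument: write $X_i = X_i\mathbf{1}_{|X_i|\le \tau} + X_i\mathbf{1}_{|X_i|>\tau}$, apply Bernstein's inequality to the bounded part, and control the unbounded part via the $\psi_\alpha$ tail directly. Either approach works; the truncation route is shorter and sidesteps the bookkeeping you anticipate.
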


We apply Lemma \ref{lemma:sub-exponential-1} with $\alpha=2/m$ and $a_1=a_2=\cdots =a_N=1/\sqrt{N}$ to obtain that: if $Y_1,\dots, Y_N$ are independent random variables with $\|Y_i-\E Y_i\|_{\psi_{2/m}} \le M$ for every $1\le i\le N$, then for any $u> 0$,
\begin{align}\label{eq:prop_lemma_aux0}
\mathbb{P}\bigg(\frac{|\sum_{i=1}^N Y_i-\E Y_i|}{c(m)\sqrt{N}M}\ge u\bigg)\le 
2\exp(-5\min\{u^2,(\sqrt{N}u)^{2/m}\})\le \frac{2}{\phi(u)^2},
\end{align}
where $c>1$ is some constant depending only on $m$ and $\phi(u)=2^{\min\{(\sqrt{N}u)^{2/m},\, u^2\}}-1.$ Therefore,
\begin{align}\label{eq:prop_lemma_aux1}
\E \phi\bigg(\frac{|\sum_{i=1}^N Y_i-\E Y_i|}{c(m)\sqrt{N}M}\bigg)&=\int_0^{\infty} \mathbb{P}\bigg(\phi\bigg(\frac{|\sum_{i=1}^N Y_i-\E Y_i|}{c(m)\sqrt{N}M}\bigg)\ge u\bigg)du \nonumber\\
&=\int_0^{\infty} \mathbb{P}\bigg(\frac{|\sum_{i=1}^N Y_i-\E Y_i|}{c(m)\sqrt{N}M}\ge \phi^{-1}(u)\bigg)du \nonumber\\
&\le 1+\int_{1}^{\infty}\mathbb{P}\bigg(\frac{|\sum_{i=1}^N Y_i-\E Y_i|}{c(m)\sqrt{N}M}\ge \phi^{-1}(u)\bigg)du \nonumber\\
&\overset{\eqref{eq:prop_lemma_aux0}}{\le} 1+\int_{1}^{\infty}\frac{2}{u^2}du=3.
\end{align}
    
For any $g,h\in \F$, using that $\psi_{2/m}$ is a quasi-norm \cite[Lemma A.1, A.3]{gotze2021concentration} we have
\[
\||g-h|^{m}\|_{\psi_{2/m}}\lesssim_m \|g-h\|_{\psi_2}^m,
\]
and 
\begin{align}\label{eq:prop_lemma_aux2}
\||g-h|^{m}-\E |g-h|^{m}\|_{\psi_{2/m}}\lesssim_m \||g-h|^{m}\|_{\psi_{2/m}}\lesssim_m \|g-h\|_{\psi_2}^m.
\end{align}
Similarly,
    \begin{align}\label{eq:prop_lemma_aux3}
    &\||g|^m-|h|^m-\E[|g|^m-|h|^m]\|_{\psi_{2/m}}\lesssim_{\alpha} \||g|^m-|h|^m\|_{\psi_{2/m}} \nonumber\\
    &=\bigg\|(|g|-|h|)\bigg(\sum_{\ell=0}^{m-1} |g|^{\ell} |h|^{m-1-\ell} \bigg)\bigg\|_{\psi_{2/m}} \nonumber\\
    &\lesssim_m \||g|-|h|\|_{\psi_2}\bigg\|\sum_{\ell=0}^{m-1} |g|^{\ell} |h|^{m-1-\ell}\bigg\|_{\psi_{2/(m-1)}} \nonumber\\
    &\lesssim_m \|g-h\|_{\psi_2} \left(\sum_{\ell=0}^{m-1} \|g\|^{\ell}_{\psi_2}\|h\|^{m-1-\ell}_{\psi_2}\right) \nonumber\\
    &\lesssim_m \|g-h\|_{\psi_2}d_{\psi_2}^{m-1}(\F).
    \end{align}
We also have 
\begin{align}\label{eq:prop_lemma_aux4}
\||g|^m\|_{\psi_{2/m}}\lesssim_m \|g\|_{\psi_2}^m\le d_{\psi_2}^m(\F).
\end{align}
    The proof is completed by applying \eqref{eq:prop_lemma_aux1} with $Y_i=|g-h|^m(X_i), 1\le i\le N$ and the upper bound of $\psi_{2/m}$ quasi-norm \eqref{eq:prop_lemma_aux2}, applying \eqref{eq:prop_lemma_aux1} with $Y_i=(|g|^m-|h|^m)(X_i), 1\le i\le N$ and the upper bound \eqref{eq:prop_lemma_aux3}, and applying \eqref{eq:prop_lemma_aux1} with $Y_{i}=|g|^m(X_i), 1\le i\le N$ and the upper bound \eqref{eq:prop_lemma_aux4}.
\end{proof}

\end{appendix}

\end{document}